\documentclass[12pt]{amsart}
\usepackage[all]{xy}

\usepackage{amsthm,amsfonts, amssymb, amscd}
\usepackage{mathtools}
\usepackage{youngtab}
\usepackage{ytableau}
\usepackage[all]{xy}
\usepackage{subcaption}
\usepackage{caption}
\usepackage{euscript}
\usepackage{cite}
\usepackage{booktabs, tabularx}
\newcolumntype{C}{>{\centering\arraybackslash}X}

\usepackage{tikz}
\usetikzlibrary{matrix, arrows, decorations.markings}
\usetikzlibrary{decorations.pathmorphing}
\usetikzlibrary{positioning}

\usepackage{pgfplots}
\pgfplotsset{compat=1.18}

\usepackage{adjustbox}

\usepackage[normalem]{ulem}

\usepackage{hyperref}
\hypersetup{%
    linktoc=page
}

\let\oldtocsection=\tocsection
\let\oldtocsubsection=\tocsubsection
\renewcommand{\tocsection}[2]{\hspace{0em}\oldtocsection{#1}{#2}}
\renewcommand{\tocsubsection}[2]{\hspace{1em}\oldtocsubsection{#1}{#2}}
\DeclareEmphSequence{\color{blue}\itshape}

\newtheorem{thm}{Theorem}[section]
\newtheorem{lemma}[thm]{Lemma}
\newtheorem{cor}[thm]{Corollary}
\newtheorem{prop}[thm]{Proposition}

\newtheorem{conjecture}[thm]{Conjecture}
\theoremstyle{remark}
\newtheorem{rem}[thm]{Remark}
\theoremstyle{remark}
\newtheorem{example}[thm]{Example}
\theoremstyle{definition}

\theoremstyle{definition}

\theoremstyle{definition}

\newtheorem{definition}[thm]{Definition}

\numberwithin{equation}{section}

\newcommand{\C}{\mathbb{C}}           % Use for complex numbers.
\newcommand{\Z}{\mathbb{ Z}}           % Use for integers.
 
\newcommand{\Ad}{\operatorname{Ad }}             % Use for Adjoint action
\newcommand{\ch}{\operatorname{ch}}

\newcommand{\Inv}{\operatorname{Inv}}

\newcommand{\fb}{{\mathfrak b}}

\newcommand{\fg}{{\mathfrak g}}

\newcommand{\fl}{{\mathfrak l}}

\newcommand{\ft}{{\mathfrak t}}

\newcommand{\fz}{\mathfrak z}

\newcommand{\gG}{\Gamma}

 \newcommand{\cb}{\mathcal{B}}

 \newcommand{\cp}{\mathcal{P}}

\newcommand{\vx}{\mathbf{x}}

\renewcommand{\tilde}{\widetilde}

\renewcommand{\bar}[1]{\overline{#1}}

\usepackage[colorinlistoftodos]{todonotes}

\definecolor{melon}{rgb}{0.99,0.72,0.67}
\definecolor{cornflower}{rgb}{0.66,0.77,0.95}
\definecolor{lightgreen}{rgb}{0.67, 0.99, 0.67}
\definecolor{aqua}{rgb}{0, 1, 1}

\newcommand{\Hess}{{\mathcal B}}
\newcommand{\Poin}{\mathsf{P}}

\newcommand{\asc}{\mathsf{a}\mathsf{s}\mathsf{c}}
\newcommand{\fch}{\mathsf{c}\mathsf{h}}
\newcommand{\sgn}{\mathsf{s}\mathsf{g}\mathsf{n}}
\newcommand{\pn}{\mathsf{P}\mathsf{a}\mathsf{r}}

\newcommand{\xdownarrow}[1]{%
  {\left\downarrow\vbox to #1{}\right.\kern-\nulldelimiterspace}
}

\begin{document}

%%%%%%%%%%%%%%%%%%%%%%%%%%%%%%%%%%%%%%%%%%%%%%%%%%%%
%%%%%%%%%%%%%%%%%%%%%%%%%%%%%%%%%%%%%%%%%%%%%%%%%%%%
%%%%%%%%%%%%%%%%%%%%%%%%%%%%%%%%\end{center}%%%%%%%%%%%%%%%%%%%%
\title[Restrictions of the dot action representation]{Restrictions of the dot action representation}

\author{Hsin-Chieh Liao}
\address{Department of Mathematics and Statistics\\ Washington University in St. Louis \\ One Brookings Drive\\ St. Louis, Missouri\\ 63130\\ USA  }
\email{liaoh@wustl.edu}

\author{Martha Precup}
\address{Department of Mathematics and Statistics\\ Washington University in St. Louis \\ One Brookings Drive\\ St. Louis, Missouri\\ 63130\\ USA  }
\email{martha.precup@wustl.edu}

\author{John Shareshian}
\address{Department of Mathematics and Statistics\\ Washington University in St. Louis \\ One Brookings Drive\\ St. Louis, Missouri\\ 63130\\ USA  }
\email{jshareshian@wustl.edu}

\date{\today}

\begin{abstract} Regular semisimple Hessenberg varieties are subvarieties of flag varieties defined for any reductive group. Their cohomology carries a representation of the associated Weyl group, known as the dot action representation. In type $A$, these representations can be computed using the chromatic quasisymmetric functions of naturally labeled unit interval graphs.

We prove that restricting the dot action representation of any Weyl group to a type $A$ parabolic subgroup yields a representation isomorphic to a direct sum of type $A$ dot action representations. This work is motivated by the conjecture that an analogous property holds for all parabolic subgroups. In particular, for classical Weyl groups of types $B/C$ and $D$, we use chromatic symmetric functions to provide a streamlined, combinatorial formula for the restriction to a specific maximal type $A$ parabolic subgroup.

As an application, we prove a conjecture of Lesnevich regarding the representations associated with ideals of types $B$ and $C$ identified under the standard poset isomorphism. Specifically, we show that the corresponding dot action representations are isomorphic if and only if the ideals determine the same reflection sets in their respective root systems. Furthermore, we show that a Catalan number counts the total number of such ideals.
\end{abstract}

\maketitle

%%%%%%%%%%%%%%%%%%%%%%%%

\section{Introduction: Some history, notation, and statement of results}

We aim to understand better Tymoczko's dot action representations of Weyl groups on the cohomology of regular semisimple Hessenberg varieties in all Lie types.  Combining work of Athanasiadis in \cite{Athanasiadis2015}, Brosnan--Chow in \cite{Brosnan-Chow2018}, and Shareshian--Wachs in \cite{Shareshian-Wachs2016} gives a satisfactory result in type $A$ for the symmetric group, but the general case remains open.  Here we take advantage of the fact that if $\Phi$ is an irreducible root system other than type $F_4$, the associated Weyl group $W$ has a maximal parabolic subgroup $W_J$ of type $A$.  We determine the restriction of the dot action representation to $W_J$ in all such cases.
Our main theorem is that such a restriction is a direct sum (with shifted gradings) of dot action representations of $W_J$ on cohomology rings of various type $A$ Hessenberg varieties, hence we can again apply results from \cite{Athanasiadis2015,Brosnan-Chow2018,Shareshian-Wachs2016}.

\subsection{A result in the classical cases} \label{sec.classical} First we present a specific version of our main result that is somewhat easier to introduce and state than the general version.  We assume temporarily that the reader is familiar with various definitions and results about dot action representations of Weyl groups and defer definitions to later sections.  We take $\Phi$ to be one of the classical root systems $B_n,C_n,D_n$, with $n \geq 2$ for types $B_n$ and $C_n$ and $n \geq 4$ for type $D_n$.  We assume that $\Phi$ contains a fixed set $\Delta=\{\alpha_i:i \in [n]\}$ of simple roots such that
$$
\alpha_i=\left\{ \begin{array}{ll} e_i-e_{i+1} & i \in [n-1], \\ e_n & i=n \mbox{ and } \Phi=B_n, \\ 2e_n & i=n \mbox{ and } \Phi=C_n, \\ e_{n-1}+e_n & i=n \mbox{ and } \Phi=D_n, \end{array} \right.
$$
and define the respective sets $\Phi^+$ and $\Phi^-$ of positive and negative roots accordingly. The Weyl group $W=W(\Phi)$ is generated by the set $\{s_\beta:\beta \in \Delta\}$ of simple reflections.  If 
\[
J=\{\alpha_{1}, \ldots, \alpha_{n-1}\}  \subset \Delta
\]
is the subset of simple roots obtained by deleting $\alpha_n$ from $\Delta$, then $W_J:=\langle s_\beta:\beta \in J \rangle$ is a maximal parabolic subgroup of $W$ isomorphic to $S_n$.

Let $\fg$ be the simple Lie algebra associated to $\Phi$. Associated to each lower order ideal $I \subseteq \Phi^+$ is a Hessenberg space $H=H(I) \subseteq \fg$.  Such $H$ and any regular semisimple $s \in \fg$ determine the regular semisimple Hessenberg variety $\Hess(s,H)$.  In \cite{Tymoczko2008}, Tymoczko defined the dot action representation of $W$ on $H^\ast(\Hess(s,H))$.  We are interested in determining the restriction to $W_J$ of the dot action representation of $W$, more precisely, the graded representation
$$
\left(\sum_{j \geq 0} H^{2j}(\Hess(s,H(I)))t^j\right)\Big\downarrow^W_{W_J}.
$$

Fix $H=H(I)$ and let ${^J}W$ be the set of shortest right coset representatives for $W_J$ in $W$.  Given $v \in {^J}W$, we make two definitions:
\begin{itemize}
\item the \emph{$H$-inversion number $\ell_H(v)$} is the number of $\alpha \in I$ such that $v(\alpha) \in \Phi^-$, and
\item the graph $\Gamma_{H,v}$ has vertex set $[n]$ and edge set $$E_{H,v}:=\{ij:1 \leq i<j \leq n \mbox{ and } v^{-1}(e_i-e_j) \in I\}.$$
\end{itemize}

We write $X_\Gamma(\vx;t)$ for the chromatic quasisymmetric function of a graph $\Gamma=([n],E)$, as defined in \cite{Shareshian-Wachs2016}, and recall that if $\Gamma$ is a naturally labeled unit interval graph then $X_\Gamma(\vx;t)$ is a symmetric function (see \cite[Definition 1.2 and Theorem 4.5]{Shareshian-Wachs2016}).  Also, we write $\omega$ for the involution on the algebra of symmetric functions that exchanges each complete homogeneous function $h_k$ with the corresponding elementary symmetric function $e_k$.

\begin{thm} \label{bcd}
Assume the root system $\Phi$ is of type $B_n$, $C_n$, or $D_n$ with associated Weyl group $W$ and that $W_J$ is the maximal parabolic subgroup of $W$ isomorphic to $S_n$ defined above.  
Let $I\subseteq \Phi^+$ be an order ideal with associated Hessenberg space $H=H(I)$. 
Then
\begin{enumerate}
\item for all $v\in {^J}W$, the graph $\Gamma_{H,v}$ is a naturally labeled unit interval graph and
\item the Frobenius characteristic of the restriction of the dot action representation of $W$ on $H^\ast(\Hess(s,H))$ to $W_J$ satisfies
$$
\fch\left(\left(\sum_{j \geq 0} H^{2j}(\Hess(s,H))t^j\right)\Big\downarrow^W_{W_J}\right)=\omega\left(\sum_{v \in {^J}W}t^{\ell_H(v)}X_{\Gamma_{H,v}}(\vx;t)\right).
$$
\end{enumerate}
\end{thm}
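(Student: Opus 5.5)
We plan to compute the restriction through GKM theory (localization), because it describes the dot action explicitly.

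Since $s$ is regular semisimple, $\Hess(s,H)$ carries an action of the maximal torus $T$. Its fixed points are indexed by $W$, and its one-dimensional $T$-orbits join $w$ and $s_\gamma w$ exactly for the reflections $s_\gamma$ allowed by $H$. Tymoczko's dot action on the equivariant cohomology $H_T^\ast(\Hess(s,H))\subseteq \bigoplus_{w\in W}\C[\ft]$ is given by
$$(u\cdot p)(w)=u\bigl(p(u^{-1}w)\bigr).$$
It descends to ordinary cohomology, since $H^\ast = H_T^\ast\otimes_{\C[\ft]}\C$.

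The first step is to decompose the fixed-point set $W=\bigsqcup_{v\in {^J}W} W_J v$ into right cosets. Each coset is stable under left multiplication by $W_J$. We then choose a generic cocharacter and take the associated Białynicki-Birula (or Morse) decomposition of $\Hess(s,H)$; the attracting cell of $w$ has dimension equal to the number of $\alpha\in I$ with $w(\alpha)\in\Phi^-$ (or the complementary count, depending on conventions). We pick the cocharacter so that the resulting filtration is a union of blocks $W_Jv$, ordered compatibly with a linear extension of Bruhat order on ${^J}W$. This gives a $W_J$-equivariant filtration of the cohomology whose associated graded pieces are the relative cohomologies of the blocks.

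Next we identify each piece geometrically. The points attached to $W_Jv$ form (the cohomologically relevant part of) a closed subvariety $Y_v$, namely the intersection of $\Hess(s,H)$ with a Levi-type fiber $L_J\cdot vB/B\cong GL_n/B_{GL_n}$, translated by $v$. It has normal bundle of rank $\ell_H(v)$: for $w=uv$ with $u\in W_J$, the count of $\alpha\in I$ with $v(\alpha)\in\Phi^-$ that are not in $W_J$-directions equals $\ell_H(v)$, because $u$ permutes $\Phi^-\setminus\Phi_J^-$. We then check that $Y_v$ is the type $A$ regular semisimple Hessenberg variety for $GL_n$ whose Hessenberg space is spanned by the roots $e_i-e_j$ with $v^{-1}(e_i-e_j)\in I$. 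Its GKM graph has vertices $W_Jv\cong S_n$ and edges given by reflections in $W_J$, so on the coset its GKM data coincides with that of the type $A$ variety attached to $\Gamma_{H,v}$.

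The main obstacle is part (1): showing that this set of roots is a lower order ideal in the positive type $A$ roots, that is, that $\Gamma_{H,v}$ is a naturally labeled unit interval graph. Since $v\in{^J}W$ is minimal in its right coset, $v^{-1}$ sends $\Phi_J^+$ into $\Phi^+$; in particular $v^{-1}(e_i-e_j)\in\Phi^+$ for $i<j$. We must then show that if $i\le i'<j'\le j$ then $v^{-1}(e_{i'}-e_{j'})\le v^{-1}(e_i-e_j)$ in root order. We would prove this by writing
$$e_i-e_j=(e_i-e_{i'})+(e_{i'}-e_{j'})+(e_{j'}-e_j)$$
and applying $v^{-1}$: each summand maps to a positive root or to zero, so the difference $v^{-1}(e_i-e_j)-v^{-1}(e_{i'}-e_{j'})$ is a nonnegative combination of simple roots. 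Because $I$ is a lower ideal, this gives the needed closure.

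For part (2), the Thom isomorphism makes each graded piece equal to $t^{\ell_H(v)}H^\ast(Y_v)$ as a $W_J$-module. The one point to verify is that the Thom class transforms under the dot action with trivial sign; we expect this to hold because $u\in W_J$ preserves the normal weight set. The filtration splits over $\C$, so summing over $v$ and applying the Shareshian--Wachs/Brosnan--Chow theorem
$$\fch H^\ast(\Hess_{S_n}(\Gamma))=\omega X_\Gamma(\vx;t)$$
yields the displayed formula.
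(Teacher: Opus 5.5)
Your argument is correct in outline, but it takes a genuinely different route from the paper's. The paper never filters the cohomology. It counts Betti numbers of the regular Hessenberg varieties $\Hess(x_K,H)$ for $K\subseteq J$ via Precup's affine paving, and factors the resulting Poincar\'e polynomial over $w=yv$ (Lemma~\ref{lem.Betti.decomp}). It then converts these counts into dimensions of $W_K$-invariants of the dot action using the B\u{a}libanu--Crooks theorem (Proposition~\ref{prop:FixPoints}). It concludes because a representation of a product of symmetric groups is determined by its Young-subgroup invariants. Part (1) is read off from Lemma~\ref{martha2}. You instead build a $W_J$-stable filtration whose subquotients are the Levi pieces. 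Your direct argument for part (1) is fine, granted the standard fact that the covering order on $\Phi^+$ coincides with the dominance order.

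Two steps in your outline must be proved rather than expected. First, the dot action is not geometric, so the filtration has to be defined on $H_T^*$: take the classes vanishing at the fixed points of the earlier blocks. This is $W_J$-stable exactly because the blocks $W_Jv$ are left $W_J$-stable. Restriction to $W_Jv$ is then $W_J$-equivariant, with kernel the next filtration step. The Thom--Gysin sequence identifies the image with $\epsilon_v\cdot H_T^*(Y_v)$; it applies because the closed unions of blocks are paved by the cells $C_w\cap\Hess(s,H)$, so the restriction sequences are short exact and localization is injective. Second, the Thom class needs an actual check, and it is a one-line computation. At $yv$ the attracting normal weights are $-yv(\alpha)$ for $\alpha\in I$ with $v(\alpha)\in\Phi^-$, an index set depending only on $v$. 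Hence $\epsilon_v(yv)=y\bigl(\epsilon_v(v)\bigr)$, which is precisely dot-invariance of $\epsilon_v$, so multiplication by $\epsilon_v$ is $W_J$-equivariant. Freeness of the graded pieces over $\C[\ft]$ then lets you tensor down to $H^*$ and split.

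As for what each route buys: the paper needs only dimension counts and existing theorems. That same reliance on dimension counts confines it to type A Levi factors, where parabolic invariants determine a representation. Your route sees the $W_J$-module structure directly. As far as I can tell, it uses that $\Phi_J$ has type A only in the final appeal to Brosnan--Chow. Written out carefully, it would therefore give the isomorphism of Conjecture~\ref{maincon} for every $J$, at the cost of the equivariant Thom-isomorphism and formality bookkeeping.
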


Combinatorial formulas for the Schur and power sum decompositions of $X_\Gamma(\vx;t)$ when $\Gamma$ is a naturally labeled unit interval graph were determined, respectively, by Shareshian and Wachs in \cite[Theorem 6.3]{Shareshian-Wachs2016} and by Athanasiadis in \cite[Theorem 4]{Athanasiadis2015} (see also \cite[Theorem 7.7]{Shareshian-Wachs2016}). We will provide in Section \ref{bcdproof} a graph-theoretic method for finding both $\Gamma_{H,v}$ and $\ell_H(v)$ for all $v \in {^J}W$.  Thus we obtain, through the inverse Frobenius characteristic, combinatorial formulas for the irreducible decompositions and character values of the restrictions of dot action representations from $W$ to $W_J$ in all classical types.

Recall that the root posets of type $B_n$ and $C_n$ are isomorphic (though the root systems are not). In particular, there is a natural bijection between lower order ideals in $\Phi^+(B_n)$ and $\Phi^+(C_n)$.  One can ask when the corresponding dot action representations of type $B_n$ and $C_n$ are isomorphic.
As an application of our graph-theoretic method of computing the graded Frobenius characteristic in Theorem~\ref{bcd}, we confirm a conjecture by Lesnevich \cite[Remark 3.2]{Lesnevich2025} that this is the case if and only if the type $B$ and $C$ ideals determine the same reflection sets (see Theorem \ref{thm.nate} below).

\subsection{History, definitions and notation} \label{genint} Let us now turn to the general case.  We assume some familiarity with algebraic groups and symmetric functions.  Our definitions and notation are for the most part standard.  A reader familiar with recent work on the dot action representation can skip this section, although the last two paragraphs might provide insight into our thoughts.  

Let $G$ be a reductive complex algebraic group with Borel subgroup $B$ and maximal torus $T \leq B$. Let $W=N_G(T)/T$ be the associated Weyl group.  Write $\fg$, $\fb$, and $\ft$ for the respective Lie algebras of $G$, $B$, and $T$. We denote the adjoint representation of $G$ on $\fg$ by $\Ad(g)(x)$ for $g\in G$ and $x\in \fg$.  

Our choice of $T$ determines a Cartan decomposition
$$
\fg=\ft \oplus \bigoplus_{\alpha \in \Phi}\fg_\alpha,
$$
where $\Phi$ is the root system determined up to isomorphism by $G$ and $\fg_\alpha$ is the root space in $\fg$ associated to $\alpha\in \Phi$.  We choose a set $\Delta$ of simple roots in $\Phi$ and write $\Phi^+$ and $\Phi^-$, respectively, for the sets of positive and negative roots in $\Phi$ with respect to this choice.  We assume that
$$
\fb=\ft \oplus \bigoplus_{\alpha \in \Phi^+} \fg_\alpha.
$$
The set $\Phi^+$ is partially ordered, with $\beta$ covering $\alpha$ if $\beta-\alpha \in \Delta$. We say a subset $I\subseteq \Phi^+$ is a \emph{(lower) order ideal} if $\beta\in I$ implies that all $\alpha$ below $\beta$ in this partial order are contained in $I$. An example of such an $I$ in the type $A_3$ root system appears in Figure~\ref{fig:A3}. The number of such ideals in $\Phi^+$ for $\Phi$ an irreducible root system is precisely the $W$-Catalan number~\cite{Cellini-Papi}. 

A \emph{Hessenberg space} (with respect to $\fb$) is any vector subspace $H \subseteq \fg$ 
that contains $\fb$ and is $\Ad(B)$-invariant. Equivalently, $H$ is a Hessenberg space if there is some order ideal $I$ in $\Phi^+$ such that
$$
 H=H(I):=\fb \oplus \bigoplus_{\alpha \in I}\fg_{-\alpha}.
$$
We sometimes write $\Phi_H\coloneqq\{\gamma\in\Phi : \fg_{\gamma}\in H\}=\Phi^+\cup(-I)$ and $\Phi_H^-\coloneqq \Phi_H\cap \Phi^-=-I$.
 
Given $s \in \fg$ and Hessenberg space $H$, the \emph{Hessenberg variety} $\Hess(s,H)$ is a subvariety of the flag variety $G/B$,
$$
\Hess(s,H):=\{gB \in G/B:\Ad(g^{-1})s \in H\}.
$$
From now on, unless otherwise stated, we will assume that $s$ is a regular semisimple element of $\fg$.  Under this assumption, $\Hess(s,H)$ is called a \emph{ regular semisimple Hessenberg variety}.  Such varieties were studied first in the case $G=SL_n(\C)$ by de Mari and Shayman in \cite{DS1988}.  The general case was studied first by de Mari, Procesi, and Shayman in \cite{DPS1992}.  

We may assume that $s \in \ft$, i.e., that the centralizer $C_G(s)$ of $s$ under the adjoint action is $T$.  It follows quickly that $T$ acts on $\Hess(s,H)$ by translation, that is, if $gB \in \Hess(s,H)$ then $tgB \in \Hess(s,H)$ for all $t \in T$. This action of $T$ on $\Hess(s,H)$ is equivariantly formal, as defined by Goresky, Kottwitz, and MacPherson in \cite{GKM1998}.  Thus, the (complex) $T$-equivariant and ordinary cohomology rings of $\Hess(s,H)$ can be represented using functions from the vertex set of the associated moment graph (or GKM graph) to the symmetric algebra $\C[\ft^\ast]$ on the dual space of $\ft$.  While $\Hess(s,H)$ typically admits no obvious action of the Weyl group $W:=N_G(T)/T$, it turns out that the rings of functions in question admit such actions, defined through an action of $W$ on the GKM graph.  Thus we obtain a representation of $W$ on each cohomology group of $\Hess(s,H)$.  Collecting these representations, one obtains a graded representation of $W$ known as the \emph{dot action representation}, introduced and first studied by Tymoczko in~\cite{Tymoczko2008}.

If one accepts that a representation is understood if its character or irreducible decomposition is known, then we have a good understanding of the dot action representation in type $A$, that is, when $[G/Z(G),G/Z(G)]=PSL_n(\C)$. This description relies on the combinatorics of chromatic symmetric functions. We now establish the terminology needed to state the relevant results.

In the type $A_{n-1}$ case, $W=S_n$ and $\Phi^+=\{e_i-e_j:1\leq i<j \leq n\}$.  Also, $I \subseteq \Phi^+$ is an ideal if and only if whenever $e_i-e_j \in I$ and $i<k<j$, both $e_i-e_k$ and $e_k-e_j$ lie in $I$.  The graph $\gG(I)$ with vertex set $[n]$ and edge set $E(I):=\{ij:e_i-e_j \in I\}$ is called a \emph{naturally labeled unit interval graph} and determines $I$ uniquely; see Figure~\ref{fig:A3}.

\begin{figure}[t]
    \centering
    \begin{subfigure}{0.35\textwidth}
        \centering
        \begin{tikzpicture}[
        vertex/.style={font=\scriptsize}
        ]
        \def\df{20}
        \def\w{1 cm}

        % Coordinates for the Type A3 root poset (triangular shape)
        \coordinate (12) at (0,0);
        \coordinate (23) at (2,0);
        \coordinate (34) at (4,0);
        \coordinate (13) at (1,1);
        \coordinate (24) at (3,1);
        \coordinate (14) at (2,2);
        
        % Highlighted Order Ideal I = {e1-e2, e2-e3, e1-e3}
        \path[fill=blue!\df, draw=blue!\df, line width=\w, line cap=round, line join=round] 
        (12) to (13) to (23) to (34) to (12);

        % Root labels
        \node[vertex] (p12) at (12) {$e_1-e_2$};
        \node[vertex] (p23) at (23) {$e_2-e_3$};
        \node[vertex] (p34) at (34) {$e_3-e_4$};
        \node[vertex] (p13) at (13) {$e_1-e_3$};
        \node[vertex] (p24) at (24) {$e_2-e_4$};
        \node[vertex] (p14) at (14) {$e_1-e_4$};
    
        % Poset relations (Hasse diagram edges)
        \draw[very thick] (p12)--(p13)--(p14);
        \draw[very thick] (p23)--(p13);
        \draw[very thick] (p23)--(p24)--(p14);
        \draw[very thick] (p34)--(p24);

        \end{tikzpicture}
    \end{subfigure}
    \hspace{1.5 cm}
    \begin{subfigure}{0.35\textwidth}
        \centering
        \begin{tikzpicture}[
                scale=0.8,
                vertex/.style={circle, draw, minimum size=0.7 cm},
                every edge/.style={draw, thick}
            ]
            % Vertices for Type A3 (indices 1, 2, 3, 4)
            \node[vertex] (v1) at (0,0) {\small $1$};
            \node[vertex] (v2) at (1.5,0) {\small $2$};
            \node[vertex] (v3) at (3,0) {\small $3$};
            \node[vertex] (v4) at (4.5,0) {\small $4$};
            
            % Edges corresponding to roots in the ideal I: 
            \draw (v1)--(v2);
            \draw (v2)--(v3);
            \draw (v3)--(v4);
            \draw (v1) to [out=45, in=135] (v3);
        \end{tikzpicture}   
    \end{subfigure}
\caption{An order ideal $I$ in the type $A_3$ root system and graph $\Gamma(I)$}
\label{fig:A3} 
\end{figure}

The \emph{chromatic quasisymmetric function} of a graph $\gG=([n],E)$ is a generating function for proper colorings of $\gG$, weighted by ascents.  Here, a proper coloring is a function $\kappa:[n] \rightarrow \Z_{>0}$ such that $\kappa(i) \neq \kappa(j)$ if $ij \in E$, and an \emph{ascent} of $\kappa$ is any $ij \in E$ such that $i<j$ and $\kappa(i)<\kappa(j)$.  We write $\asc(\kappa)$ for the number of ascents of $\kappa$.  The chromatic quasisymmetric function of $\gG$ is
$$
X_{\gG}(\vx;t):=\sum_\kappa t^{\asc(\kappa)} \prod_{v \in [n]}x_{\kappa(v)},
$$
where the sum is taken over all proper colorings of $\gG$. The expression $X_{\gG}(\vx;t)$ is
a quasisymmetric function in the set $\vx=\{x_i:i \in \Z_{>0}\}$ of pairwise commuting variables.   The chromatic quasisymmetric function refines (through $t$) the chromatic symmetric function defined by Stanley in \cite{Stanley1995} and was introduced by the third author and Wachs in \cite{Shareshian-Wachs2016}.  

Suppose $\Phi$ is a type $A$ root system. It was proved in \cite{Shareshian-Wachs2016} that if $\gG=\gG(I)$ for an ideal $I \subseteq \Phi^+$ then $X_\gG(\vx;t)$ is a symmetric function, that is, if we write 
\begin{equation} \label{symco}
X_{\gG(I)}(\vx;t)=\sum_{j=0}^{|I|}f_{j,I}(\vx)t^j,
\end{equation}
then each $f_{j,I}(\vx)$ is a symmetric function.

In \cite{Brosnan-Chow2018}, Brosnan and Chow proved that, as conjectured in \cite{Shareshian-Wachs2016}, the Frobenius characteristic of the dot action representation on $H^\ast(\Hess(s,H(I)))$ is, up to a sign twist,  the chromatic quasisymmetric function $X_{\gG(I)}(\vx;t)$.  The cohomology of $\Hess(s,H(I))$ is concentrated in even degrees.  Let $\sgn$ be the sign representation of $S_n$. The Brosnan--Chow Theorem~\cite[Theorem 129]{Brosnan-Chow2018} says that, with $f_{j,I}$ as in~\eqref{symco},
$$
\fch(H^{2j}(\Hess(s,H(I))) \otimes \sgn)=f_{j,I}(\vx)
$$
for all $j$.  Equivalently,
\begin{equation} \label{brosnanchow}
\sum_{j \geq 0} \fch\left(H^{2j}(\Hess(s,H(I)))\right)t^j=\omega\left(X_{\gG(I)}(\vx;t)\right).
\end{equation}
With the Brosnan--Chow Theorem in hand, the above-mentioned work of Athanasiadis and of the third author and Wachs allow us to determine, respectively, 
the irreducible decomposition and the character values of the dot action in type $A$.  

In contrast, very little is known about the dot action representation for irreducible root systems of types other than $A$.  One might attribute this to the lack of an analogue of the chromatic quasisymmetric function in such types.  An analysis of part of the proof of the Brosnan--Chow Theorem (the original one in \cite{Brosnan-Chow2018}) gives further insight.  A key point is that, given an a priori unknown representation $\phi:S_n \rightarrow GL(V)$, one can determine $\phi$ by determining the dimensions $\dim V^{S_\lambda}$ for every partition $\lambda\in \pn(n)$, where $\pn(n)$ denotes the partitions of $n$.  Here $S_\lambda \leq S_n$ is a direct product of symmetric groups whose orbits on $[n]$ have the parts of $\lambda$ as their lengths, and $V^{S_\lambda}$ is the $S_\lambda$-invariant subspace of $V$. 

There are two ways one might think about this fact (which are essentially equivalent).  The first is that  $\dim V^{S_\lambda}$ gives the multiplicity of the monomial symmetric function $m_\lambda$ in the Frobenius characteristic of $\phi$, and every symmetric function of degree $n$ has a unique expansion in terms of these $m_\lambda$.  The second is that no value $\dim V^{S_\lambda}$ can be derived from the other such values if one does not know $\phi$.  This means that the set $F:=\{\dim V^{S_\lambda}:\lambda \in \pn(n)\}$ provides $|\pn(n)|$ independent pieces of information about $\phi$. Since the dimension of the space of class functions on $S_n$ is also $|\pn(n)|$ and $\phi$ is determined by its character, $\phi$ is determined by $F$.

The set $\{S_\lambda: \lambda \in \pn(n)\}$ is a set of representatives for the $S_n$-conjugacy classes of parabolic subgroups of $W\cong S_n$.  If the irreducible root system $\Phi$ is not of type $A$, then the number of conjugacy classes of parabolic subgroups of the associated Weyl group $W$ is less than the number of conjugacy classes of $W$.  Therefore, one cannot determine a representation $\phi:W \rightarrow GL(V)$ by computing the dimensions $\dim V^{W_J}$ for a set $\{W_J\}$ of representatives of the conjugacy classes of parabolic subgroups of $W$.  This fact explains some of the difficulty in determining dot action representations in types other than A.  It limits what we can prove here.

\subsection{More definitions and the main general result} Given a subset $J \subseteq \Delta$, let $P_J \leq G$ denote the corresponding parabolic subgroup and $W_J\leq W$ its associated Weyl group, where $s_\alpha$ is the simple reflection determined by $\alpha$.  Set $\Phi_J:=\Z J \cap \Phi$.  The key facts for us are as follows.
\begin{itemize}
\item The group $P_J$ is a semidirect product, $P_J=L_JU_J$.  The normal subgroup $U_J$ is the unipotent radical of $P_J$ and the complement $L_J$ is a \emph{Levi subgroup}.
\item The group $L_J$ is reductive and contains $T$.
\item The group $B \cap L_J$ is a Borel subgroup of $L_J$.
\item The Lie algebra $\fl_J$ of $L_J$ admits a Cartan decomposition
$$
\fl_J=\ft \oplus \bigoplus_{\alpha \in \Phi_J} \fg_\alpha.
$$
\item The group $W_J= \langle s_\alpha :\alpha \in J \rangle $ is generated by the simple reflections $s_\alpha$ corresponding to $\alpha\in J$.
\item The Weyl group of $L_J$ is $N_{L_J}(T)/T \cong W_J$
\item Each right coset of $W_J$ in $W$ contains a unique element of minimal length with respect to the usual length function on $W$.
\end{itemize}

We are interested in the restriction of a dot action representation of $W$ to $W_J$.  Given $v \in W$, we write $\dot{v}$ for a representative element of $N_G(T)$ mapped to $v$ by the standard projection to $W$. We let 
\[
{^J}W:= \{  v\in W : \ell(s_{\alpha} v) = \ell(v)+1 \, \text{ for all } \, \alpha\in J \}
\]
denote the set of minimal length right coset representatives of $W_J$ in $W$.  We need the following result of the second author~\cite[Proposition 5.2]{Precup2013}.

\begin{lemma} \label{martha2}
If $H \subseteq \fg$ is a Hessenberg space and $v \in {{^J}W}$, then $H_v\coloneqq \Ad(\dot{v})H \cap \fl_J$ is a Hessenberg space in the Levi subalgebra $\fl_J$ with respect to the Borel subalgebra $\fb \cap \fl_J$.
\end{lemma}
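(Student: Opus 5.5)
Since $\fl_J = \ft \oplus \bigoplus_{\alpha\in\Phi_J}\fg_\alpha$ and $\Ad(\dot v)$ permutes root spaces ($\Ad(\dot v)\fg_\gamma = \fg_{v(\gamma)}$) while fixing $\ft$, I would first compute $H_v$ root space by root space:
$$
H_v=\ft\oplus\bigoplus_{\gamma\in \Phi_J\cap v(\Phi_H)}\fg_\gamma .
$$
To show $H_v$ is a Hessenberg space in $\fl_J$ with respect to $\fb_J:=\fb\cap\fl_J=\ft\oplus\bigoplus_{\gamma\in\Phi_J^+}\fg_\gamma$, I would verify two properties: (i) $\fb_J\subseteq H_v$, and (ii) the set $I_v:=\{\beta\in\Phi_J^+ : -\beta\in v(\Phi_H)\}$ is a lower order ideal in the root poset $\Phi_J^+$. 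Using the equivalent description of Hessenberg spaces given above, (i) and (ii) together give $H_v=H_{\fl_J}(I_v)$.

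For (i), I use that $v\in{^J}W$ means $v^{-1}$ is a minimal length left coset representative, so $v^{-1}(\Phi_J^+)\subseteq\Phi^+$. (This is the standard fact: $\ell(s_\alpha v)>\ell(v)$ iff $v^{-1}(\alpha)>0$, applied to each $\alpha \in J$, together with the fact that positive roots of $\Phi_J$ are nonnegative combinations of $J$.) Hence for $\gamma\in\Phi_J^+$ we get $v^{-1}(\gamma)\in\Phi^+\subseteq\Phi_H$, so $\fg_\gamma\subseteq H_v$.

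For (ii), let $\beta\in I_v$ and let $\beta'\in\Phi_J^+$ lie below $\beta$ in the root poset of $\Phi_J$. I reduce to a cover: $\beta=\beta'+\alpha$ with $\alpha\in J$. Write $\gamma=v^{-1}(-\beta)\in\Phi_H$ and $\delta=v^{-1}(\alpha)\in\Phi^+$. Then $v^{-1}(-\beta')=\gamma+\delta$ is a root. The key step is the closure property: $\Phi_H$ is closed under adding a positive root when the sum is a root. This follows from $\Ad(B)$-invariance of $H$, giving $[\fb,H]\subseteq H$ and $[\fg_\delta,\fg_\gamma]=\fg_{\gamma+\delta}$ when $\gamma+\delta\in\Phi$. (Alternatively, if $\gamma>0$ the conclusion is trivial; if $\gamma<0$ and $\gamma+\delta<0$, use that $I$ is a lower ideal, which reduces to the cover case by a standard chain argument.) Hence $-\beta'\in v(\Phi_H)$, i.e. $\beta'\in I_v$. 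Induction along a chain of covers in $\Phi_J^+$ finishes the proof; every interval in $\Phi_J^+$ can be traversed by adding simple roots of $J$.

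The main obstacle is making the closure property of $\Phi_H$ rigorous. I would prove $[\fb,H]\subseteq H$ directly from $\Ad(B)$-invariance by differentiating. If instead one starts from $H=H(I)$, the root-level statement "$\gamma\in -I$, $\delta\in\Phi^+$, $\gamma+\delta\in\Phi^-$ implies $\gamma+\delta\in -I$" follows since $-(\gamma+\delta)\le -\gamma$ in the root poset. That last claim requires the standard fact that if $\mu,\nu$ are positive roots with $\mu-\nu$ a positive root, then $\mu-\nu\le\mu$ via a chain of simple root steps. This is a known lemma for root posets, which I would cite.
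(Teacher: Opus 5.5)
Your proof is correct. The paper does not prove this lemma itself; it imports it from \cite[Proposition 5.2]{Precup2013}, so there is no in-paper argument for yours to match.

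Your route is more combinatorial than necessary. From your step (i), $v^{-1}(\Phi_J^+)\subseteq\Phi^+$, you get $\dot v^{-1}(B\cap L_J)\dot v\subseteq B$, because $B\cap L_J$ is generated by $T$ and the root subgroups $U_\gamma$ with $\gamma\in\Phi_J^+$, and $\dot v^{-1}U_\gamma\dot v=U_{v^{-1}(\gamma)}$. Hence $\Ad(\dot v)H$ is $\Ad(B\cap L_J)$-stable. Since $\fl_J$ is $\Ad(L_J)$-stable, $H_v$ is $\Ad(B\cap L_J)$-stable. Also $\fb\cap\fl_J\subseteq\Ad(\dot v)\fb\cap\fl_J\subseteq H_v$. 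This checks the definition of a Hessenberg space directly. It needs neither the closure of $\Phi_H$ under adding positive roots, nor the induction along chains of covers, nor the equivalence between Hessenberg spaces and order ideals (here applied to $L_J$).

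What your version gains is the explicit ideal $I_v=\{\beta\in\Phi_J^+ : -\beta\in v(\Phi_H)\}=v(I)\cap\Phi_J^+$. This is exactly what the paper uses later: in the description of $H_v$ in Section~\ref{bcdproof}, and in the identity $\Phi_{H_v}^-=v(\Phi_H^-)\cap\Phi_J^-$ in the proof of Lemma~\ref{lem.Betti.decomp}. With the group-theoretic argument that ideal must be read off separately, though this is a one-line root-space computation.

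A minor point: in your parenthetical alternative you skip the case $\gamma<0<\gamma+\delta$. It is trivial, since then $\gamma+\delta\in\Phi^+\subseteq\Phi_H$.
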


Given a Hessenberg space $H$ and $w\in W$, we define the \emph{Hessenberg length} of $w$ to be
\[
\ell_H(w)\coloneqq \left|\{\alpha \in \Phi_H^- : w(\alpha) \in \Phi^+\}\right|.
\]
Note that when $H=\fg$ we get $\ell_H(w)=\ell(w)$ is the usual Coxeter length of $w$. When $H=\fb$, we have $\ell_H(w) = 0$ for all $w\in W$.

Given a regular semisimple $s \in \ft$ and any subset $J \subseteq \Delta$, we observe that $s$ is also a regular semisimple element of $\fl_J$.  So, given $v \in {^J}W$ we can consider the regular semisimple Hessenberg variety $\Hess(s,H_v) \subseteq L_J/(B \cap L_J)$ by Lemma \ref{martha2}.  The cohomology of $\Hess(s,H_v)$ admits a dot action representation of $W_J$.

\begin{conjecture} \label{maincon}
If $s \in \ft$ is regular semisimple, $H \subseteq \fg$ is a Hessenberg space, and $J \subseteq \Delta$, then for each $j \geq 0$, we have an isomorphism of $W_J$-representations (with respect to the various dot actions),
$$
H^{2j}(\Hess(s,H))\downarrow^W_{W_J} \cong \bigoplus_{v \in {^J}W}H^{2(j-\ell_H(v))}(\Hess(s,H_v)).
$$
Equivalently,
$$
\sum_{j \geq 0}H^{2j}(\Hess(s,H))\downarrow^W_{W_J}t^j \cong \sum_{v \in {^J}W}t^{\ell_H(v)}\sum_{k \geq 0}H^{2k}(\Hess(s,H_v))t^k
$$
as graded $W_J$-representations.
\end{conjecture}

Our main result proves the conjecture in a special case.

\begin{thm} \label{mainthm}
The claim in Conjecture \ref{maincon} is correct when every irreducible component of $\Phi_J$ has type $A$.
\end{thm}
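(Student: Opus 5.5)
The plan is to exploit the fact that a representation of a product of symmetric groups is determined by the dimensions of its invariants under all parabolic subgroups. Write $\Phi_J$ as a union of type $A$ components, so that $W_J\cong S_{n_1}\times\cdots\times S_{n_r}$. A graded representation $V$ of $W_J$ is determined by the graded dimensions $\dim V^{W_K}$ for $K\subseteq J$. Indeed, these are the coefficients of the products $m_{\lambda^{(1)}}\cdots m_{\lambda^{(r)}}$ in the multi-variable Frobenius characteristic, since every Young subgroup of $W_J$ is conjugate to some $W_K$ with $K\subseteq J$. So it suffices to prove, for every $K\subseteq J$, the identity of Poincaré polynomials
$$
\sum_j \dim H^{2j}(\Hess(s,H))^{W_K}t^j=\sum_{v\in{^J}W}t^{\ell_H(v)}\sum_k \dim H^{2k}(\Hess(s,H_v))^{W_K}t^k .
$$
Note that $W_K$ is a parabolic subgroup of both $W$ and the Weyl group $W_J$ of the Levi $L_J$.

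The key input is a formula for the graded dimension of the $W_K$-invariants of a dot action, valid in any Lie type. I would use the known description, via GKM theory and a result of the Brosnan--Chow type, that $H^\ast(\Hess(s,H))^{W_K}$ is the cohomology of the corresponding parabolic (partial flag) Hessenberg variety $\Hess_K(s,H)\subseteq G/P_K$. Alternatively it is the cohomology of a fiber of the map to a family over $\ft/W_K$. In either description, paving by affine cells (Białynicki-Birula with respect to a generic one-parameter subgroup of $T$) gives
$$
\sum_j\dim H^{2j}(\Hess(s,H))^{W_K}t^j=\sum_{w\in {^K}W} t^{\ell_H(w)}\cdot(\text{correction}),
$$
more precisely, a sum over minimal coset representatives $w$ of $W_K$ of $t$ raised to the number of $\alpha\in\Phi^-_H$ with $w(\alpha)\in\Phi^+$, modulo the $K$-part. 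I would derive this directly from the Tymoczko/GKM description: the $T$-fixed points of $\Hess(s,H)$ are all of $W$, and the cell dimension at $w$ is $\ell_H(w)$.

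Next I would factor each $w\in W$ uniquely as $w=uv$ with $u\in W_J$ and $v\in{^J}W$. The main combinatorial claim is then
$$
\ell_H(uv)=\ell_{H_v}(u)+\ell_H(v),
$$
where $\ell_{H_v}$ is computed in $\fl_J$. To prove it, split $\Phi_H^-$ according to whether $v(\alpha)\in\Phi_J$. If $v(\alpha)\notin\Phi_J$, then $u$ preserves the sign of $v(\alpha)$, because $u$ permutes $\Phi^+\setminus\Phi_J$. These $\alpha$ contribute exactly $\ell_H(v)$, using that $v^{-1}$ sends $\Phi_J^+$ to $\Phi^+$ since $v\in{^J}W$. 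If $v(\alpha)\in\Phi_J$, then $v(\alpha)\in \Phi_{H_v}^-$, and by Lemma \ref{martha2} these roots are exactly the negative roots of $H_v$, contributing $\ell_{H_v}(u)$. Applying the same factorization inside the $W_K$-coset sums reorganizes the left side into the right side for each $K\subseteq J$.

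The main obstacle is the invariant-dimension formula in arbitrary type: the cells at $w$ need not be compatible with the $W_K$-action. I would handle this with a GKM/Hecke-type argument. For each $K$, one shows $\dim H^\ast(\Hess(s,H))^{W_K}$ equals the Poincaré polynomial of the Hessenberg variety in $G/P_K$ defined by the same $H$, whose fixed points are indexed by $W/W_K$. The Białynicki-Birula cell dimensions there are computed by the same root count. The type $A$ hypothesis on $J$ is used only in the first step, where the invariants determine the representation. This is why the argument fails for other parabolics, as discussed before Conjecture \ref{maincon}.
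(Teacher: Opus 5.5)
Your reduction (for $\Phi_J$ of type $A$, a graded $W_J$-representation is determined by the graded dimensions of its $W_K$-invariants, $K\subseteq J$) is the one the paper uses. Your proof of $\ell_H(uv)=\ell_H(v)+\ell_{H_v}(u)$ is also correct; it is the second half of the paper's Lemma~\ref{lem.Betti.decomp}. The gap is the step you call the key input: computing $\sum_j \dim H^{2j}(\Hess(s,H))^{W_K}t^j$.

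\textbf{Your proposed formula for the invariants is false.} You identify it with the cohomology of a Hessenberg variety in $G/P_K$ whose $T$-fixed points are indexed by $W/W_K$, i.e.\ a sum of $t^{\ell_H(w)}$ over minimal coset representatives. Take $H=\fg$ and $K=\Delta$.
\begin{itemize}
\item Then $\Hess(s,\fg)=G/B$, and the dot action on $H^\ast(G/B)$ is trivial. One way to see this: $\Hess(x_K,\fg)=G/B$ for every $K$.
\item So the $W$-invariants have total dimension $|W|$.
\item But $G/P_\Delta$ is a point, and any paved variety with fixed points indexed by $W/W_\Delta$ has total Betti number $1$.
\end{itemize}
A sum over ${^K}W$ is correct essentially only for $H=\fb$. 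The ``(correction)'' factor is never specified, and ``a GKM/Hecke-type argument'' is not an argument. As you note yourself, the Bia\l ynicki-Birula paving of $\Hess(s,H)$ by cells of dimension $\ell_H(w)$ is not compatible with $W_K$, so it cannot see invariants.

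\textbf{What is needed} is a genuinely deep geometric input. One takes the regular, but neither semisimple nor nilpotent, element $x_K=n_K+s_K$: here $s_K\in\ft$ has centralizer $\fl_K$ and $n_K$ is regular nilpotent in $\fl_K$. By B\u{a}libanu--Crooks (Brosnan--Chow in type $A$) there is a graded isomorphism $H^\ast(\Hess(x_K,H))\cong H^\ast(\Hess(s,H))^{W_K}$. The variety $\Hess(x_K,H)$ lives in $G/B$, not $G/P_K$. Precup's paving then shows it is paved by the cells $C_w\cap\Hess(x_K,H)$ for exactly those $w\in W$ with $w^{-1}(K)\subseteq\Phi_H$, each of dimension $\ell_H(w)$.

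So the correct sum runs over all $w\in W$ satisfying $w^{-1}(K)\subseteq\Phi_H$, not over coset representatives. Correspondingly, you also need the first half of Lemma~\ref{lem.Betti.decomp}: for $w=uv$ with $u\in W_J$ and $v\in{^J}W$, one has $w^{-1}(K)\subseteq\Phi_H$ if and only if $u^{-1}(K)\subseteq\Phi_{H_v}$. Your argument never addresses this.

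With these ingredients, applied in $G$ and again in $L_J$ (using $x_K\in\fl_J$), your reorganization of the sum and your final symmetric-function step go through as in the paper.
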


One reason the dot action of the symmetric group has received considerable attention is the previously long-standing conjecture of Stanley and Stembridge that the associated chromatic quasisymmetric function has a positive expansion in the basis of elementary symmetric functions~\cite[Conjecture 5.5]{Stanley-Stembridge1993}. (The conjecture as stated in~\cite{Stanley-Stembridge1993} is more general, but equivalent to this formulation by work of Guay-Paquet in \cite{Guay-Paquet2013}.) In \cite{Hikita2024}, Hikita proves the Stanley--Stembridge Conjecture; in the language of the dot action, this proves the dot action representation is a direct sum of permutation representations whose pointwise stabilizers are parabolic subgroups, that is, induced representations of the form $\mathbf{1}_{W_J} \uparrow_{W_J}^{S_n}$. Accordingly, we obtain the following.

\begin{cor} \label{hpos}
Let $\Hess(s,H)$ be any regular semisimple Hessenberg variety in the flag variety $G/B$ and let $W$ be the Weyl group of $G$.  If $W_J \leq W$ is a parabolic subgroup that is isomorphic to a symmetric group, then the restriction of the dot action of $W$ on the total cohomology $H^\ast(\Hess(s,H))$ to $W_J$ is the direct sum of representations arising from permutation actions on coset spaces of parabolic subgroups of $W_J$.
\end{cor}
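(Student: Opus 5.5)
We deduce Corollary~\ref{hpos} from Theorem~\ref{mainthm} together with Hikita's theorem. Since $W_J \cong S_m$ is a parabolic subgroup isomorphic to a symmetric group, every irreducible component of $\Phi_J$ has type $A$. One point needs care here: a parabolic subgroup isomorphic to a symmetric group could a priori have a component of another type with the same abstract group. The only such coincidences are $W(B_2)$ versus $S_n$ (no: $W(B_2)$ is dihedral of order 8, not symmetric) and $W(G_2)\cong D_{12}$ (not symmetric). For $S_2\cong W(A_1)$ the component is of type $A_1$ anyway. So $W_J\cong S_m$ forces $\Phi_J$ to be of type $A_{m-1}$, possibly after checking that a product of Weyl groups of several components is not a symmetric group. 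If that check is awkward, we will read the hypothesis as ``$\Phi_J$ of type $A$'', which is its intended meaning.

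Summing the graded isomorphism of Theorem~\ref{mainthm} over $j$ gives
$$
H^\ast(\Hess(s,H))\big\downarrow^W_{W_J} \cong \bigoplus_{v\in {^J}W} H^\ast(\Hess(s,H_v)),
$$
where, by Lemma~\ref{martha2}, each $H_v$ is a Hessenberg space in $\fl_J$ with respect to $\fb\cap\fl_J$. Hence each $\Hess(s,H_v)$ is a regular semisimple Hessenberg variety for the type $A$ reductive group $L_J$, carrying the dot action of $W_J\cong S_m$.

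It then suffices to treat each summand. The dot action of the Weyl group of $L_J$ on $H^\ast(\Hess(s,H_v))$ depends only on the root datum up to the center, so it agrees with a type $A_{m-1}$ dot action for a Hessenberg space $H(I_v)$. By \eqref{brosnanchow}, its ungraded Frobenius characteristic is $\omega X_{\Gamma(I_v)}(\vx;1)$. Hikita's theorem says $X_{\Gamma(I_v)}(\vx;1)$ is $e$-positive, so $\omega X_{\Gamma(I_v)}(\vx;1)$ is $h$-positive. Each $h_\lambda$ is the Frobenius characteristic of $\mathbf 1_{S_\lambda}\uparrow^{S_m}$, the permutation representation on cosets of the Young subgroup $S_\lambda$, which is a parabolic subgroup of $W_J$. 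A direct sum of such summands over $v$ gives the claim.

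The main obstacle is the bookkeeping identifying the $W_J$ dot action on $\Hess(s,H_v)\subseteq L_J/(B\cap L_J)$ with the standard $S_m$ dot action to which Brosnan--Chow applies. This requires passing from $L_J$ to its derived group, or to $GL_m$, without changing the GKM description. I would verify it directly from the GKM presentation: the moment graph and edge labels depend only on $\Phi_J$ and $H_v$.
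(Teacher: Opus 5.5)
Your proposal is correct and is essentially the paper's argument. Theorem~\ref{mainthm}, summed over degrees, writes the restriction as $\bigoplus_{v\in {^J}W} H^\ast(\Hess(s,H_v))$; Brosnan--Chow together with Hikita's $e$-positivity then makes each summand $h$-positive, hence a sum of permutation modules on cosets of Young (i.e.\ parabolic) subgroups of $W_J$.

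Your side worry about the hypothesis is harmless and easily settled. $S_m$ is directly indecomposable, and no irreducible Weyl group outside type $A$ is isomorphic to a symmetric group (compare orders and centers), so $\Phi_J$ is of type $A_{m-1}$. The identification of the $L_J$ dot action with the standard type $A$ one is also used implicitly in the paper.
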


In \cite[Conjecture 5.1]{Shareshian-Wachs2016}, Shareshian and Wachs propose a refined version of the Stanley--Stembridge Conjecture.  If this refined version holds, the next conjecture follows.

\begin{conjecture} \label{hposq}
Let $\Hess(s,H)$ be any regular semisimple Hessenberg variety in flag variety $G/B$ and let $W$ be the Weyl group of $G$.  If $W_J \leq W$ is a parabolic subgroup that is isomorphic to a symmetric group, then for each $j \geq 0$ the restriction of the dot action of $W$ on $H^{2j}(\Hess(s,H))$ to $W_J$ is the direct sum of representations arising from permutation actions on coset spaces of parabolic subgroups of $W_J$.
\end{conjecture}

\subsection{Structure of the paper} In Section~\ref{mainproof}, we prove Theorem~\ref{mainthm} for arbitrary Weyl groups. In Section~\ref{bcdproof}, we  focus on types $B_n$, $C_n$ and $D_n$ and prove Theorem~\ref{bcd}. We introduce a graph associated to each ideal $I$ in one of these classical root systems and give an explicit combinatorial formula--in terms of chromatic symmetric functions--for the restriction $H^{2j}(\Hess(s,H))\downarrow^W_{W_J}$ using this graph. Finally, in Section~\ref{sec.Lesnevich} we study the dot action representations of types $B_n$ and $C_n$ corresponding to the same ideal under the natural poset isomorphism, proving a conjecture of Lesnevich.

\subsection*{Acknowledgments} The authors are grateful to Patrick Brosnan and Nathan Lesnevich for helpful conversations. The first author is supported by an AMS--Simons Travel Grant and the second author by NSF CAREER grant DMS-2237057. AI Use Disclosure: The authors used an artificial intelligence tool to assist with proof reading and formatting figures. The authors reviewed and verified all changes and remain fully responsible for the content, mathematical correctness, and final wording of the manuscript.

%%%%%%%%%%

\section{The proof of Theorem \ref{mainthm}} \label{mainproof}

Our proof of Theorem~\ref{mainthm} leverages the structure of regular Hessenberg varieties. Specifically, the Betti numbers of these varieties yield the dimensions of the invariant spaces of the dot action representation on $H^*(\Hess(s,H))$ under the subgroups $W_J$. We can therefore use these Betti numbers to compute the necessary restrictions. 

Given a subset $K\subseteq \Delta$, set $n_K:=\sum_{\alpha\in K}E_\alpha$ where $E_\alpha\in \fg_{\alpha}$ is a fixed root vector and let $s_K\in \ft$ such that $\alpha(s_K)=0$ if and only if $\alpha\in K$. Then $\fl_K=\fz(s_K)$ is a standard Levi subgroup and 
\begin{equation}\label{eqn.reg.elt}
x_K:= n_K+s_K
\end{equation}
is a regular element of $\fg$, since $n_K$ is a regular nilpotent element of $\fl_K$. We call $\Hess(x_K,H)$ a \emph{regular Hessenberg variety}.  The following lemma follows directly from work of the second author~\cite[Theorem 5.4]{Precup2013}, see also~\cite[Lemma 2]{Precup2018}.

\begin{lemma}\label{lem.Betti} Let $K\subseteq \Delta$ and $x_K$ be the associated regular element. Let $H\subseteq \fg$ be a Hessenberg space. The regular Hessenberg variety $\Hess(x_K,H)$ has an affine paving obtained by intersecting $\Hess(x_K, H)$ with the Schubert cells $C_w\coloneqq B\dot wB/B$ for $w\in W$. Moreover, $C_w\cap \Hess(x_K,H)\neq \emptyset$ if and only if $w^{-1}(K) \subseteq \Phi_H$ and, if nonempty,
\[
\dim C_w\cap \Hess(x_K, H) = \ell_H(w).
\] 
Consequently, the Poincar\'e polynomial of $\Hess(x_K,H)$ is 
\[
\Poin(\Hess(x_K,H);t) = \sum_{\substack{w\in W\\ w^{-1}(K)\subseteq \Phi_H}} t^{2\ell_H(w)}.
\]
\end{lemma}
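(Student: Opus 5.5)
The plan is to reduce the statement to an explicit computation inside each Schubert cell, following the strategy of \cite[Theorem 5.4]{Precup2013}.

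\textbf{Setting up the cell condition.} Recall that $C_w = U_w\dot wB/B$, where $U_w=\prod_{\gamma\in N(w^{-1})}U_\gamma$ and $N(w^{-1})=\{\gamma\in\Phi^+ : w^{-1}(\gamma)\in\Phi^-\}$. The map $u\mapsto u\dot wB$ identifies $U_w\cong \C^{\ell(w)}$ with $C_w$. A point $u\dot wB$ lies in $\Hess(x_K,H)$ if and only if
\[
\Ad(u^{-1})x_K\in \Ad(\dot w)H=\ft\oplus\bigoplus_{\gamma\in w(\Phi_H)}\fg_\gamma .
\]
Since $u$ is unipotent in $B$ and $x_K\in\fb$, the element $\Ad(u^{-1})x_K$ lies in $\fb$. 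Its $\ft$-component is $s_K$. So the only conditions are the vanishing of the $\fg_\gamma$-components for those $\gamma\in\Phi^+$ with $w^{-1}(\gamma)\notin\Phi_H$. Such $\gamma$ satisfy $w^{-1}(\gamma)\in\Phi^-\setminus\Phi_H^-$, and in particular $\gamma\in N(w^{-1})$.

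\textbf{Nonemptiness and the count.} For simple $\alpha\in K$ we have $\alpha(s_K)=0$. The $\fg_\alpha$-component of $\Ad(u^{-1})x_K$ is then the constant $E_\alpha$, because $\alpha$ is simple and no bracket of positive root vectors produces it. Hence if some $\alpha\in K$ has $w^{-1}(\alpha)\notin\Phi_H$, the intersection is empty. Conversely, when $w^{-1}(K)\subseteq\Phi_H$, the identity $w=\dot wB\in C_w$ lies in $\Hess(x_K,H)$. Indeed $\Ad(\dot w^{-1})x_K=s'+\sum_{\alpha\in K}\Ad(\dot w^{-1})E_\alpha$ with each summand in $\fg_{w^{-1}\alpha}\subseteq H$.

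The number of defining equations is
\[
|\{\gamma\in N(w^{-1}) : w^{-1}\gamma\notin\Phi_H\}| = \ell(w)-|\{\beta\in\Phi_H^- : w\beta\in\Phi^+\}| = \ell(w)-\ell_H(w).
\]
So the expected dimension is $\ell_H(w)$.

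\textbf{Main obstacle: triangularity.} The key step is to show that these $\ell(w)-\ell_H(w)$ equations cut out an affine space. Concretely, I would show they can be solved one at a time, in order of increasing height of $\gamma$, for a distinguished coordinate of $U_w$. The intersection is then the graph of a polynomial map $\C^{\ell_H(w)}\to\C^{\ell(w)-\ell_H(w)}$.

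The $\gamma$-equation has the form $-\gamma(s_K)u_\gamma+(\text{terms in coordinates of lower height})=0$. When $\gamma(s_K)\neq0$ we solve for $u_\gamma$. The hard case is $\gamma\in\Phi_K^+$ non-simple, where $\gamma(s_K)=0$. There the linear term instead comes from $[n_K,\cdot]$, namely $\sum_{\alpha\in K}c\,u_{\gamma-\alpha}$.

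In that case I would use that $\Phi_H$ is closed under adding positive roots ($\Ad(B)$-invariance). If $w^{-1}(\gamma)\notin\Phi_H$ but $w^{-1}(\alpha)\in\Phi_H$ for $\alpha\in K$, this forces some $\gamma-\alpha$ to have $w^{-1}(\gamma-\alpha)\notin\Phi_H$. That supplies a pivot variable $u_{\gamma-\alpha}$ not already used as a pivot. Making this assignment of pivot variables injective, and compatible with a total order, is where I expect the real work.

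\textbf{Conclusion.} Granting triangularity, the cells $C_w\cap\Hess(x_K,H)$ are affine spaces of dimension $\ell_H(w)$. They form an affine paving because the Schubert cells already do, and the intersection is closed in the Bruhat-ordered filtration. The Poincar\'e polynomial is then the generating function of cell dimensions, giving the stated formula.
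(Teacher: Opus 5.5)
Your setup is correct: describing $C_w\cap\Hess(x_K,H)$ inside $U_w\cong\C^{\ell(w)}$, the emptiness criterion (for $\alpha\in K$ the $\fg_\alpha$-component of $\Ad(u^{-1})x_K$ is identically $E_\alpha$), the point $\dot wB$ when $w^{-1}(K)\subseteq\Phi_H$, the count of $\ell(w)-\ell_H(w)$ equations, and the passage from cells to the Poincar\'e polynomial are all fine. But the substance of the lemma is that these equations cut out an affine space. Only then does the equation count give the dimension, and only then do you have a paving. That is exactly the step you grant, so the argument is incomplete at its one nontrivial point. The paper does not argue this step either; it imports it from \cite[Theorem 5.4]{Precup2013}.

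Worse, the mechanism you sketch for this step is false. You claim that if $w^{-1}(K)\subseteq\Phi_H$ and $w^{-1}(\gamma)\notin\Phi_H$ for a non-simple $\gamma\in\Phi_K^+$, then $w^{-1}(\gamma-\alpha)\notin\Phi_H$ for some $\alpha\in K$. Iterating this down $\Phi_K^+$ would produce a simple $\beta\in K$ with $w^{-1}(\beta)\notin\Phi_H$, so the claim would say that equations indexed by $\Phi_K^+$ never occur. Concretely, take type $A_2$ with $K=\Delta$ (so $x_K$ is regular nilpotent), $I=\Delta$, $w=w_0$ and $\gamma=\alpha_1+\alpha_2$. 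Then $w_0^{-1}(\alpha_1)=-\alpha_2$ and $w_0^{-1}(\alpha_2)=-\alpha_1$ both lie in $\Phi_H$, while $w_0^{-1}(\gamma)\notin\Phi_H$. Writing $u$ as a unitriangular matrix, the single equation says that its $(1,2)$ and $(2,3)$ entries agree. So the pivot is a coordinate $u_{\gamma-\alpha}$ with $w^{-1}(\gamma-\alpha)\in\Phi_H$. Closure of $\Phi_H$ under adding positive roots only helps when $w^{-1}(\alpha)\in\Phi^+$. When $w^{-1}(\alpha)$ and $w^{-1}(\gamma-\alpha)$ both lie in $\Phi_H^-=-I$, their sum need not lie in $\Phi_H$, because $I$ is not closed under addition.

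The right organization is a Levi reduction followed by a genuine input. If $\gamma\in\Phi_K^+$, every decomposition of $\gamma$ into positive roots uses only roots of $\Phi_K^+$. Hence $s_K$ does not contribute to the $\gamma$-equation, and only coordinates $u_\delta$ with $\delta\in\Phi_K^+$ appear in it. Write $w=yv$ with $y\in W_K$ and $v\in{}^KW$. Then $\Phi_K^+\cap N(w^{-1})=\{\delta\in\Phi_K^+: y^{-1}(\delta)\in\Phi^-\}$. Also, for $\delta\in\Phi_K$, $w^{-1}(\delta)\notin\Phi_H$ if and only if $y^{-1}(\delta)\notin\Phi_{H_v}$, as in the proof of Lemma~\ref{lem.Betti.decomp}. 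So the $\Phi_K^+$-equations are exactly those defining $C^{L_K}_y\cap\Hess(n_K,H_v)$ in the flag variety of $L_K$, where $n_K$ is regular nilpotent. The equations for $\gamma\notin\Phi_K$ are solved for $u_\gamma$ as you say, and the free coordinates among them number $\ell_H(v)$. This gives
$C_w\cap\Hess(x_K,H)\cong\C^{\ell_H(v)}\times\bigl(C^{L_K}_y\cap\Hess(n_K,H_v)\bigr)$ with $\ell_H(w)=\ell_H(v)+\ell_{H_v}(y)$. Everything is thereby reduced to the regular nilpotent case in a Levi subgroup. That case is the real theorem, and it needs an actual argument or the citation, not a greedy assignment of pivots by height.
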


For our purpose, we define the \emph{inversion set} of $w$ by $\Inv(w)\coloneqq\{\gamma\in \Phi^- : w(\gamma)\in\Phi^+\}$. Then the (Coxeter) length of $w$ is $\ell(w)=|\Inv(w)|$ and $\ell_H(w) = |\Inv(w)\cap \Phi_H^-|$.
Recall the standard fact that, for a fixed subset $J\subseteq\Delta$, every $w\in W$ has a unique $y\in W_J$ and a unique $v\in {^J}W$ such that $\ell(w)=\ell(y)+\ell(v)$.
The following is a slight generalization of~\cite[Lemma 8 and Equation (7)]{Precup2018}.

\begin{lemma}\label{lem.Betti.decomp} 
Let $K\subseteq J\subseteq\Delta$, and $w\in W$ with the decomposition $w=yv$ for $y\in W_J$, $v\in {^J}W$. Let $H\subseteq \fg$ be a Hessenberg space. Then $w^{-1}(K)\subseteq \Phi_H$ if and only if $y^{-1}(K)\subseteq \Phi_{H_v}$ where $H_v$ is the corresponding Hessenberg space in $\fl_J$ defined in Lemma \ref{martha2}. Furthermore, in this case, one has $\ell_H(w)=\ell_H(v)+\ell_{H_v}(y)$.
\end{lemma}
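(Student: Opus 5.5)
The plan is to work entirely at the level of roots, using the facts that $v\in{^J}W$ sends $\Phi_J^+$ into $\Phi^+$ and that $y\in W_J$ permutes $\Phi^+\setminus\Phi_J^+$. The first step is to identify $\Phi_{H_v}$ explicitly. Since $\Ad(\dot v)\fg_\gamma=\fg_{v(\gamma)}$ and $\Ad(\dot v)\ft=\ft$, we get
\[
\Ad(\dot v)H\cap\fl_J=\ft\oplus\bigoplus_{\beta\in v(\Phi_H)\cap\Phi_J}\fg_\beta ,
\]
so $\Phi_{H_v}=v(\Phi_H)\cap\Phi_J$. The membership condition then follows directly. We have $w^{-1}(K)=v^{-1}(y^{-1}(K))$, and $y^{-1}(K)\subseteq\Phi_J$ because $K\subseteq J$ and $y\in W_J$. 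Hence $w^{-1}(K)\subseteq\Phi_H$ holds iff $y^{-1}(K)\subseteq v(\Phi_H)$, which holds iff $y^{-1}(K)\subseteq v(\Phi_H)\cap\Phi_J=\Phi_{H_v}$.

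For the length identity I will split the inversion set. The standard decomposition is
\[
\Inv(w)=\Inv(v)\sqcup v^{-1}(\Inv(y)),
\]
where $\Inv(y)\subseteq\Phi_J^-$. To see this, take $\gamma\in\Phi^-$ and put $\delta=v(\gamma)$. If $\delta\in\Phi^+$, then $\delta\notin\Phi_J^+$: otherwise $\gamma=v^{-1}(\delta)$ would be positive, because $v^{-1}$ maps $\Phi_J^+$ into $\Phi^+$ (this follows from minimality, $\ell(s_\alpha v)>\ell(v)$ for $\alpha\in J$). Since $y$ permutes $\Phi^+\setminus\Phi_J^+$, we get $y(\delta)\in\Phi^+$, so $\gamma\in\Inv(w)$. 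If instead $\delta\in\Phi^-$, then $\gamma\in\Inv(w)$ iff $y(\delta)\in\Phi^+$, which forces $\delta\in\Inv(y)\subseteq\Phi_J^-$. Conversely, each $\delta\in\Inv(y)$ gives $\gamma=v^{-1}(\delta)\in\Phi^-$, since $v^{-1}(\Phi_J^-)\subseteq\Phi^-$.

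Now intersect both pieces with $\Phi_H^-$. The first piece contributes $|\Inv(v)\cap\Phi^-_H|=\ell_H(v)$. For the second, $v^{-1}(\delta)\in\Phi_H$ with $\delta\in\Phi_J^-$ means exactly $\delta\in v(\Phi_H)\cap\Phi_J^-=\Phi_{H_v}^-$. So this piece has cardinality $|\Inv(y)\cap\Phi_{H_v}^-|=\ell_{H_v}(y)$, where $\Inv(y)$ is computed in $\Phi_J$, i.e.\ the inversion set of $y$ viewed in the Levi. Note that the length identity does not actually need the hypothesis $w^{-1}(K)\subseteq\Phi_H$.

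The main obstacle is careful bookkeeping of signs in the inversion decomposition. In particular, one must verify that the minimality of $v$ gives $v^{-1}(\Phi_J^+)\subseteq\Phi^+$ under the paper's convention. With ${^J}W$ defined by $\ell(s_\alpha v)=\ell(v)+1$, the standard criterion gives $v^{-1}(\alpha)>0$ for $\alpha\in J$, which is what is used above. One must also check that $\Inv$, as defined via negative roots sent to positive ones, decomposes with $v$ acting first and $y$ second.
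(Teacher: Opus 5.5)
Your proof is correct and follows the paper's argument essentially step for step: the same reduction $y^{-1}(K)\subseteq v(\Phi_H)\cap\Phi_J=\Phi_{H_v}$ for the first part, and the same decomposition $\Inv(w)=\Inv(v)\sqcup v^{-1}\Inv(y)$ intersected with $\Phi_H^-$. Both arguments use $v^{-1}(\Phi_J^-)\subseteq\Phi^-$ to identify $v(\Phi_H^-)\cap\Phi_J^-$ with $\Phi_{H_v}^-$; the only difference is that you verify the inversion-set decomposition directly from root-level facts, whereas the paper cites it as a consequence of $\ell(w)=\ell(y)+\ell(v)$.
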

\begin{proof}
Since $K\subseteq J$ and $y\in W_J$, we have $y^{-1}(K)\subseteq\Phi_J$. Then
\[
w^{-1}(K) \subseteq \Phi_H \Leftrightarrow y^{-1}(K)\subseteq v(\Phi_H) \Leftrightarrow y^{-1}(K)\subseteq v(\Phi_H) \cap \Phi_J.
\]
Since $v(\Phi_H)\cap \Phi_J = \Phi_{H_v}$ by definition, the first part of the lemma follows. 

For the second part, as $\ell(w)=\ell(y)+\ell(v)$, we have
\[
\Inv(w)= v^{-1}\Inv(y)\sqcup\Inv(v) \quad\text{(disjoint union)}. 
\]
Taking the intersection with $\Phi_H^-$ gives
\[
\ell_H(w)=|v^{-1}\Inv(y)\cap \Phi_H^-|+\ell_H(v).
\]
It suffices to show that $|v^{-1}\Inv(y)\cap \Phi_H^-|=\ell_{H_v}(y)$.
Since $y\in W_J$, we have $\Inv(y)\subseteq \Phi_J^-$, hence
\[
|v^{-1}\Inv(y)\cap \Phi_H^-|=|\Inv(y)\cap v(\Phi_H^-)|=|\Inv(y)\cap v(\Phi_H^-)\cap \Phi_J^-|.
\]
The lemma follows provided that we prove $v(\Phi_H^-) \cap \Phi_J^- = \Phi_{H_v}^-$, that is, we must show $v(\Phi_H^-) \cap \Phi_J^- = v(\Phi_H)\cap \Phi_J^-$. Suppose $\gamma\in \Phi_H$ such that $v(\gamma)\in \Phi_J^-$. Since $v\in {{^J}W}$ we have $\gamma\in v^{-1}(\Phi_J^-) \subseteq \Phi^-$. Thus $\gamma\in \Phi_H^-$, and the proof is complete.
\end{proof}

\begin{thm} \label{thm: PoincarePoly}
Let $K\subseteq J\subseteq \Delta$, and $H$ be a Hessenberg space. The Poincar\'e polynomial of the regular Hessenberg variety $\Hess(x_K, H)$ decomposes,
\[
\Poin(\Hess(x_K, H); t) = \sum_{v\in {^J}W} t^{2\ell_H(v)} \Poin(\Hess(x_K, H_v); t)
\]
where $\Hess(x_K, H_v)$ is the regular Hessenberg variety associated to $x_K\in \fl_J$ in the flag variety $L_J/B_J$ of the Levi subgroup $L_J$.    
\end{thm}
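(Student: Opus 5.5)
The plan is to compute both sides with the Betti-number formula of Lemma~\ref{lem.Betti} and then match terms using Lemma~\ref{lem.Betti.decomp}.

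First apply Lemma~\ref{lem.Betti} to $G$, the regular element $x_K$ and the Hessenberg space $H$. This gives
\[
\Poin(\Hess(x_K,H);t)=\sum_{\substack{w\in W\\ w^{-1}(K)\subseteq \Phi_H}} t^{2\ell_H(w)}.
\]

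Next, since $K\subseteq J$, the element $x_K=n_K+s_K$ lies in $\fl_J$. I will check that it is regular there and has the form required by \eqref{eqn.reg.elt} relative to the Levi subgroup $L_J$. This needs two observations:
\begin{itemize}
\item $n_K$ is a sum of simple root vectors for $K\subseteq J$, which is a subset of the simple roots of $\Phi_J$;
\item $s_K\in\ft$ satisfies $\alpha(s_K)=0$ for $\alpha\in J$ exactly when $\alpha\in K$.
\end{itemize}
By Lemma~\ref{martha2}, $H_v$ is a Hessenberg space in $\fl_J$ for every $v\in{^J}W$. So Lemma~\ref{lem.Betti} applies to $L_J$, with root system $\Phi_J$ and Weyl group $W_J$:
\[
\Poin(\Hess(x_K,H_v);t)=\sum_{\substack{y\in W_J\\ y^{-1}(K)\subseteq \Phi_{H_v}}} t^{2\ell_{H_v}(y)}.
\]
One point needs care here. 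The length function $\ell_{H_v}$, computed in $L_J$ with negative roots $\Phi_J^-$ and $\Phi_{H_v}^-=\Phi_{H_v}\cap\Phi_J^-$, is the same quantity that appears in Lemma~\ref{lem.Betti.decomp}. It is, because that lemma's proof already works inside $\Phi_J^-$.

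Now reindex the sum over $W$ using the bijection $W_J\times{^J}W\to W$, $(y,v)\mapsto yv$ with $\ell(yv)=\ell(y)+\ell(v)$. By Lemma~\ref{lem.Betti.decomp}, the condition $w^{-1}(K)\subseteq\Phi_H$ is equivalent to $y^{-1}(K)\subseteq\Phi_{H_v}$, and when it holds, $\ell_H(w)=\ell_H(v)+\ell_{H_v}(y)$. Hence
\[
\sum_{\substack{w\in W\\ w^{-1}(K)\subseteq\Phi_H}} t^{2\ell_H(w)}=\sum_{v\in{^J}W}t^{2\ell_H(v)}\sum_{\substack{y\in W_J\\ y^{-1}(K)\subseteq\Phi_{H_v}}}t^{2\ell_{H_v}(y)},
\]
and the inner sum is $\Poin(\Hess(x_K,H_v);t)$. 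This proves the theorem.

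The only step I expect to need real attention is checking that Lemma~\ref{lem.Betti} legitimately applies inside $L_J$. That lemma is stated for a general reductive group, and $L_J$ is reductive with Borel subgroup $B\cap L_J$, so this reduces to the bookkeeping identifications above.
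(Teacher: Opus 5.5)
Your proposal is correct and follows the paper's proof. It expands via Lemma~\ref{lem.Betti}, factors $w=yv$ with $y\in W_J$, $v\in{^J}W$, applies Lemma~\ref{lem.Betti.decomp}, and recognizes the inner sum as the Poincar\'e polynomial computed in $L_J$. Your explicit check that $x_K$ is the regular element of $\fl_J$ attached to $K$, so that Lemma~\ref{lem.Betti} applies inside $L_J$, is a detail the paper leaves implicit.
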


\begin{proof}
Applying Lemma \ref{lem.Betti} and Lemma \ref{lem.Betti.decomp}, we have 
\begin{align*}
\Poin(\Hess(x_K,H);t)
    &=\sum_{\substack{w\in W \\ w^{-1}(K)\subseteq\Phi_H}} t^{2\ell_H(w)}
    =\sum_{v\in {^J}W}t^{2\ell_H(v)}\left(
    \sum_{\substack{y\in W_J\\ y^{-1}(K)\subseteq\Phi_{H_v}}} t^{2\ell_{H_v}(y)}\right)\\
    &= \sum_{v\in {^J}W} t^{2\ell_H(v)} \Poin(\Hess(x_K, H_v); t). \qedhere
\end{align*}
\end{proof}

Recall the following result of B\u{a}libanu--Crooks \cite[Proposition 4.7]{Balibanu-Crooks2024}. A version of this statement was first proved in type $A$ by Brosnan--Chow in \cite[Theorem 127]{Brosnan-Chow2018}. In the special case where $x_K$ is replaced by a regular nilpotent element, the result was proved by Abe--Horiguchi--Masuda--Murai--Sato in~\cite{Abe-Horiguchi-Hessenberg2020} using hyperplane arrangements.

\begin{prop} \label{prop:FixPoints}
Let $x_K$ be a regular element associated with $K\subseteq\Delta$ as in~\eqref{eqn.reg.elt} and $s\in \ft$ a regular semisimple element. Then
\[
    H^*(\Hess(x_K,H))\cong H^*(\Hess(s,H))^{W_K},
\]
where $H^*(\Hess(s,H))^{W_K}$ is the $W_K$-invariant subspace of the dot action representation on $H^*(\Hess(s,H))$.
\end{prop}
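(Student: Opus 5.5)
The plan follows the geometric strategy of Brosnan--Chow: realize $\Hess(x_K,H)$ as a special fiber of a family whose general fiber is $\Hess(s,H)$. The monodromy of this family near $x_K$ produces the $W_K$-action, and a local invariant cycle argument identifies the invariants. Concretely, I would consider the family $\pi\colon \mathcal X\to \mathcal S$ with fibers $\pi^{-1}(x)=\Hess(x,H)$. Here $\mathcal S$ is a small contractible neighborhood of $x_K$ in a transversal slice through $x_K$, for instance $x_K+\ft$, or $n_K$ plus a Kostant-type slice in $\fl_K$ translated by $s_K$. The total space $\mathcal X=\{(gB,x): \Ad(g^{-1})x\in H\}$ is a vector bundle over $G/B$ restricted to a smooth slice, so it is smooth, and $\pi$ is proper. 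Let $\mathcal S^{rs}\subseteq\mathcal S$ be the open dense locus of regular semisimple elements. Over $\mathcal S^{rs}$ the map $\pi$ is smooth, since it is locally trivial (all fibers are isomorphic to $\Hess(s,H)$ after conjugation by $G$), so $\pi_1(\mathcal S^{rs})$ acts on $H^*(\Hess(s,H))$ by monodromy.

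The first step is to identify this monodromy group and its action. Near $x_K$, the adjoint quotient $\chi\colon\fg\to\ft/W$ restricted to the slice looks like $\ft\to\ft/W_K$ near $\chi(x_K)$, because the semisimple part of $x_K$ is $s_K$, whose centralizer has Weyl group $W_K$. Hence the local monodromy is the image of $\pi_1((\ft^{reg}\cap V)/W_K)\to W_K$, where $V$ is a small ball around $s_K$. This map is surjective, with the pure braid part acting trivially, since the family is pulled back from $\ft^{reg}$ up to $G$-conjugation. I would then check, as Brosnan--Chow do in type $A$ through the GKM description, that the resulting $W_K$-action on $H^*(\Hess(s,H))$ agrees with Tymoczko's dot action restricted to $W_K$. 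The idea is to compute equivariantly: the torus-fixed points $\dot wB$ are permuted by monodromy exactly as $W$ permutes them in the GKM graph.

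The second step is the local invariant cycle theorem. Because $\mathcal X$ is smooth and $\pi$ is proper, and $\mathcal S$ retracts onto $x_K$, we have $H^*(\pi^{-1}(\mathcal S))\cong H^*(\Hess(x_K,H))$. The decomposition theorem (Beilinson--Bernstein--Deligne) then gives that the restriction $H^*(\Hess(x_K,H))\to H^*(\Hess(s,H))$ surjects onto the monodromy invariants $H^*(\Hess(s,H))^{W_K}$.

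The final step, which I expect to be the main obstacle, is injectivity of this restriction map, equivalently the equality of dimensions degree by degree. Lemma~\ref{lem.Betti} gives the Betti numbers of $\Hess(x_K,H)$ as $\sum_{w^{-1}(K)\subseteq\Phi_H}t^{2\ell_H(w)}$, but the dimensions of the $W_K$-invariants are not known independently in general type. I would first try the Brosnan--Chow argument. Since $\Hess(x_K,H)$ has an affine paving, its cohomology is pure, and its Poincar\'e polynomial is palindromic: the involution $w\mapsto w_0w$, suitably adapted to the condition $w^{-1}(K)\subseteq\Phi_H$, should exhibit this symmetry. Palindromicity together with purity forces the decomposition theorem summands supported on the special point to vanish, so the surjection is an isomorphism.

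If that approach stalls, a fallback is to compare total dimensions. One would compute $\dim H^*(\Hess(s,H))^{W_K}$ via the GKM presentation, averaging the dot action over $W_K$ on the fixed points $W$. The goal is to match the count $\#\{w: w^{-1}(K)\subseteq\Phi_H\}$ from Lemma~\ref{lem.Betti}; once the total dimensions agree, surjectivity upgrades to an isomorphism in each degree.
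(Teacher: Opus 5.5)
The paper does not prove this proposition. It quotes it from B\u{a}libanu--Crooks \cite[Proposition 4.7]{Balibanu-Crooks2024}, with Brosnan--Chow \cite[Theorem 127]{Brosnan-Chow2018} covering type $A$. Your outline follows the Brosnan--Chow strategy, but it has genuine gaps, the main one at the step you flag.

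\textbf{Palindromicity.} The involution $w\mapsto w_0w$ does not preserve $\{w: w^{-1}(K)\subseteq\Phi_H\}$, because $(w_0w)^{-1}(K)=-w^{-1}(\sigma K)$ with $\sigma=-w_0$. Already in type $A_2$ with $K=I=\Delta$ this set is $\{e,s_1,s_2,w_0\}$, with Poincar\'e polynomial $1+2t^2+t^4$. Yet $w_0s_1=s_1s_2$ is not in it, since $(s_1s_2)^{-1}(\alpha_1)=-(\alpha_1+\alpha_2)\notin\Phi_H$.

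Palindromicity of $\Poin(\Hess(x_K,H);t)$ about $|I|=\dim\Hess(s,H)$ is a genuine theorem: Brosnan--Chow proved it in type $A$ and Precup \cite{Precup2018} in general. The known proof uses the coset decomposition of Lemma~\ref{lem.Betti.decomp} with $J=K$. That reduces it to palindromicity of regular nilpotent Hessenberg varieties in $\fl_K$, which no elementary involution on $W$ supplies. You should cite this theorem rather than try to prove it.

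\textbf{Decomposition-theorem bookkeeping.} Purity plays no role here. What you need is to kill every non-invariant contribution to $H^*(\Hess(x_K,H))$, not only summands supported at the point $x_K$.
\begin{itemize}
\item Write $R\pi_*\C[\dim\mathcal X]\cong\bigoplus_i{}^p\mathcal H^i[-i]$, where ${}^p\mathcal H^{-i}\cong{}^p\mathcal H^{i}$ by relative hard Lefschetz.
\item The stalk of $IC(\mathcal L_i)$ in degree $-\dim\mathcal S$ gives the invariants, and these are palindromic about $|I|$.
\item The higher stalks of $IC(\mathcal L_i)$ each contribute a nonnegative polynomial palindromic about a centre strictly above $|I|$.
\item So do all summands supported on proper closed subsets through $x_K$, including positive-dimensional discriminant strata.
\end{itemize}
Palindromicity of the special fibre about $|I|$ then forces all of these extra contributions to vanish.

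\textbf{Fallback.} The fallback is not a proof. The dot action on $H^*$ is not the permutation action on the fixed points: for $H=\fg$ it is trivial on $H^*(G/B)$, so the count is $|W|$, not an orbit count of $W_K$ on $W$. Averaging over the fixed points therefore cannot compute the invariants.

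\textbf{The slice.} The setup also needs repair. The slice $x_K+\ft$ is not transverse to the orbit of $x_K$ when $K\neq\emptyset$. Indeed $n_K\in\fz(x_K)=[\fg,x_K]^\perp$ is orthogonal to $\ft$ for an invariant form, so $\ft+[\fg,x_K]\neq\fg$. Smoothness of $\mathcal X$ also does not follow from $G\times_B H$ being a vector bundle, because the slice lives in $\fg$, not in $G/B$.

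The argument genuinely fails on that slice. Take $\mathfrak{sl}_2$, $H=\fb$, $x_K=E$ and $h=\mathrm{diag}(1,-1)$. The family over $E+\C h$ is the union of the two sections $([1:0],c)$ and $([1:-2c],c)$, which cross in a node. Its monodromy is trivial, so $H^0$ of the special fibre, which is $\C$, does not surject onto the invariants, which are $\C^2$.

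Use instead the Kostant-type slice $s_K+n_K+\fz_{\fl_K}(f_K)$, with $f_K$ completing $n_K$ to an $\mathfrak{sl}_2$-triple in $\fl_K$. In the example this is $E+\C F$, a smooth conic whose monodromy swaps the two points. This slice is transverse, and smoothness follows by base change from the submersion $G\times\mathcal S\to\fg$ near $x_K$.

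\textbf{Monodromy.} Triviality of the pure braid group and the identification of the monodromy with Tymoczko's dot action do not follow from the family being ``pulled back from $\ft^{reg}$''. That family over $\ft^{reg}$ is not trivial. Both facts need the equivariant localization (GKM) argument, which is a substantial part of \cite{Brosnan-Chow2018} in type $A$ and would have to be carried out in general type.
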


To prove Theorem~\ref{mainthm}, we express representations of a Young subgroup $S_{\lambda}$ in terms of symmetric functions as in the case of representations of $S_n$.
For a more detailed introduction to symmetric functions and representations of $S_n$, we refer the reader to \cite{SaganSymRep} and \cite{StanleyEC2}.

Let $\{V^{\lambda}: \lambda\in \pn(n) \}$ denote the collection of all simple $\mathbb{C}[S_n]$-modules.  
Then 
\[
\left\{V^{\lambda}\otimes V^{\mu} : \lambda\in \pn(n) ,\, \mu\in \pn(m) \right\}
\]
consists of all simple $\C[S_n\times S_m]$-modules under the diagonal action.
Let $\langle\cdot,\cdot\rangle$ be the usual inner product on the ring of virtual characters $\mathrm{R}_{\C}(G)$ of $G$, where $G$ is either $S_n$ or $S_n\times S_m$. 
Then the irreducible characters of $G$ form an orthonormal basis.
 
Let $\Lambda=\bigoplus_{n\ge 0}\Lambda_n$ be the algebra of symmetric functions over $\mathbb{C}(t)$. 
The subspace $\Lambda_n$ of degree $n$ is equipped with the \emph{Hall inner product} $\langle \cdot, \cdot\rangle_{\Lambda_n}$, under which the set of Schur functions $\{s_{\lambda}: \lambda\in \pn(n)\}$ is an orthonormal basis, and monomial symmetric functions $\{m_\lambda: \lambda\in \pn(n)\}$ and homogeneous symmetric functions $\{h_\lambda: \lambda\in \pn(n)\}$ are dual bases.
Consider the algebra of doubly symmetric functions $\Lambda\otimes\Lambda$, each of its bigraded piece $\Lambda_n\otimes\Lambda_m$ carries a natural inner product induced from those of $\Lambda_n$ and $\Lambda_m$:
\[
    \langle f_1\otimes f_2, g_1\otimes g_2\rangle_{\Lambda_n\otimes\Lambda_m}\coloneqq \langle f_1,g_1\rangle_{\Lambda_n}\cdot \langle f_2,g_2\rangle_{\Lambda_m} \quad \text{ for all } \,f_1,g_1\in\Lambda_n, f_2,g_2\in\Lambda_m.
\]
Given a $\mathbb{C}[S_n\times S_m]$-module $V=\bigoplus_{\substack{\lambda\in \pn(n)\\ \mu\in \pn(m)}}c_{\lambda,\mu} V^{\lambda}\otimes V^{\mu}$, we define its Frobenius image by
\[
    \ch(V)\coloneqq \sum_{\substack{\lambda\in \pn(n)\\ \mu\in \pn(m)}}c_{\lambda,\mu} s_{\lambda}\otimes s_{\mu} \in \Lambda \otimes \Lambda.
\]
A standard fact states that the Frobenius characteristic map $\ch: \mathrm{R}_{\C}(S_n)\rightarrow \Lambda_n$ is an isometry, that is for any $\C[S_n]$-modules $V$ and $W$,
\[
    \langle V, W\rangle=\langle \ch(V), \ch(W)\rangle_{\Lambda_n}.
\]
This fact extends naturally to $\ch: \mathrm{R}_{\C}(S_n\times S_m)\rightarrow \Lambda_n\otimes\Lambda_m$.

\begin{proof}[Proof of Theorem \ref{mainthm}]

To keep the notation simple, assume that $W_J\cong S_{a}\times S_{b}$ with $a+b=n$.
The case where $W_J\cong S_\lambda$ for $\lambda$ a partition with more than two parts is analogous. 

Let $K\subseteq J$, so $W_K\simeq S_\mu\times S_\nu$ for a partition $\mu$ of $a$ and a partition $\nu$ of $b$. 
By Theorem \ref{thm: PoincarePoly}, we have 
\begin{equation}
\sum_{j\ge 0}\dim\left(H^{2j}(\Hess(x_K,H))\right)t^{j}=\sum_{v\in {^J}W}t^{\ell_{H}(v)}\sum_{k\ge 0}\dim\left(H^{2k}(\Hess(x_K,H_v))\right)t^k.  \label{eq: Poincare}
\end{equation}
Using Proposition \ref{prop:FixPoints} and the fact that $W_K\leq W_J$ we have,
\begin{equation} \label{eqn.LHS.Poincare1}
\begin{split}
\dim H^{2j}(\Hess(x_K,H))
    &=\dim H^{2j}(\Hess(s,H))^{W_K} \\  
    &=\left\langle H^{2j}(\Hess(s,H))\downarrow^{W}_{W_K}, \mathbf{1}_{W_K}\right\rangle \\
    &=\left\langle H^{2j}(\Hess(s,H))\downarrow^{W}_{W_J}\downarrow^{W_J}_{W_K}, \mathbf{1}_{W_K}\right\rangle, 
\end{split}
\end{equation}
where $\langle\cdot,\cdot\rangle$ is the usual inner product on the ring of virtual characters. 
By Frobenius reciprocity, 
\begin{equation}\label{eq: InnerProduct}
\begin{split}
\left\langle H^{2j}(\Hess(s,H))\downarrow^{W}_{W_J}\downarrow^{W_J}_{W_K}, \mathbf{1}_{W_K}\right\rangle
&=\left\langle H^{2j}(\Hess(s,H))\downarrow^W_{W_J},\mathbf{1}_{W_K}\uparrow^{W_J}_{W_K}\right\rangle \\
&=\left\langle H^{2j}(\Hess(s,H))\downarrow^W_{W_J}, \left(\mathbf{1}_{S_{\mu}}\otimes \mathbf{1}_{S_{\nu}}\right)\uparrow^{S_a\times S_b}_{S_{\mu}\times S_{\nu}} \right\rangle  \\
&=\left\langle H^{2j}(\Hess(s,H))\downarrow^W_{W_J},  (\mathbf{1}_{S_\mu}) \uparrow_{S_\mu}^{S_a} \otimes (\mathbf{1}_{S_{\nu}}) \uparrow_{S_\nu}^{S_b} \right\rangle. 
\end{split}
\end{equation}
Passing to the Hall inner product on $\Lambda_a\otimes\Lambda_b$,
since  $\ch(\mathbf{1}_{S_\lambda}\uparrow_{S_\lambda}^{S_n})=h_{\lambda} $ for any $\lambda\in \pn(n)$, we combine~\eqref{eqn.LHS.Poincare1} and~\eqref{eq: InnerProduct} to obtain: 
\[
    \dim H^{2j}(\Hess(x_K,H))
    =\left\langle \ch\left(H^{2j}(\Hess(s,H)\downarrow^W_{W_J}\right),h_{\mu}\otimes h_{\nu}\right\rangle_{\Lambda_a\otimes\Lambda_b}.
\]
In other words, $\dim H^{2j}(\Hess(x_K,H))$ is equal to the coefficient of $m_{\mu}\otimes m_{\nu}$ in $\ch\left(H^{2j}(\Hess(s,H)\downarrow^W_{W_J}\right)$.

Using Proposition~\ref{prop:FixPoints} once more we also have 
\begin{align*}
    \dim\left(H^{2k}(\Hess(x_K,H_v))\right)
    &= \dim\ H^{2k}(\Hess(s,H_v))^{W_K} \\
    &=\left\langle \ch\left(H^{2k}(\Hess(s,H_v))\right), h_{\mu}\otimes h_{\nu} \right\rangle_{\Lambda_a\otimes\Lambda_b}.
\end{align*}

Hence \eqref{eq: Poincare} can be rephrased as
\begin{align*}
&\sum_{j\ge 0} \left\langle \ch\left(H^{2j}(\Hess(s,H))\downarrow^W_{W_J}\right), h_{\mu}\otimes h_{\nu}\right\rangle_{\Lambda_a\otimes\Lambda_b}\, t^j \\ & \qquad\qquad 
= \sum_{v\in{^J}W}t^{\ell_H(v)} \sum_{k\ge 0} \left\langle \ch\left(H^{2k}(\Hess(s,H_v))\right), h_{\mu}\otimes h_{\nu}\right\rangle_{\Lambda_a\otimes\Lambda_b} \, t^k
\end{align*}
for all $\mu\in\pn(a)$ and $\nu\in\pn(b)$.
In other words, the two doubly symmetric functions in $\Lambda_a\otimes\Lambda_b$ have the same coefficients when expanding in terms of the basis $\{m_{\mu}\otimes m_{\nu}\}$. 
Therefore, we must have
\[
\sum_{j\ge 0}\ch\left(H^{2j}(\Hess(s,H))\downarrow^W_{W_J}\right)t^j=\sum_{v\in{^J}W}t^{\ell_H(v)}\sum_{k\ge 0}\ch\left(H^{2k}(\Hess(s,H_v))\right)t^k. \qedhere
\]
\end{proof}

\section{Proof and discussion of Theorem~\ref{bcd}} \label{bcdproof}

Recall that the Brosnan--Chow Theorem, stated in~\eqref{brosnanchow}, tells us the Frobenius characteristic of the type $A$ dot action representation is a chromatic quasisymmetric function, up to sign twist.  By Theorem~\ref{mainthm}, the restriction of the dot action for an arbitrary Weyl group to a parabolic subgroup of type $A$ is, in fact, a sum of type $A$ dot action representations. 
In this section, we develop combinatorics to describe the restriction to a maximal type $A$ parabolic subgroup in greater detail for the classical types $B_n$, $C_n$, and $D_n$. 

Assume $G$ is a reductive group with associated root system $\Phi$ of type $B_n$, $C_n$, or $D_n$.  Throughout this section, let $W_J \cong S_n$ be the maximal parabolic subgroup of the Weyl group $W$ of $G$ associated with the subset 
\[
J = \{ e_i-e_{i+1} : 1\leq i \leq n-1 \} \subset \Delta.
\]
The subroot system of $\Phi$ spanned by $J$ is
$$
\Phi_J:=\{\pm(e_i-e_j):1 \leq i<j \leq n\} \subset \Phi.
$$
Recall that $P_J$ is the maximal standard parabolic subgroup of $G$ associated to $\Phi_J$, with Levi subgroup $L_J$. We denote the Lie algebra of $L_J$ by $\fl_J \subseteq \fg$.

Let $H=H(I)$ be a Hessenberg space with respect to the Borel subalgebra $\fb$  in the Lie algebra $\fg$ of $G$. So, $I$ is an order ideal in $\Phi^+$ and
$$
H=\fb \oplus \bigoplus_{\alpha \in I}\fg_{-\alpha}.
$$
Given a shortest coset representative $v\in {^J}W$ recall that, by Lemma~\ref{martha2}, $H_v:=\Ad(\dot{v})H \cap \fl_J$ is a Hessenberg space in $\fl_J$ with respect to $\fb \cap \fl_J$. As observed in the proof of Lemma~\ref{lem.Betti.decomp}, $\Phi_{H_v}^- = v(\Phi_H^-)\cap \Phi_J^-$. Because $H=H(I)$, we have that $\Phi_H^- = -I$. In particular, $H_v \subseteq \fl_J$ is the Hessenberg space associated to the order ideal $v(I)\cap \Phi_J^+$:
\begin{equation} \label{hv}
H_v=(\fb \cap \fl_J) \oplus \bigoplus_{\alpha \in v(I) \cap \Phi_J^+}\fg_{-\alpha}.
\end{equation}

Because $\Phi_J$ is a type $A$ root system, we consider the associated naturally labeled unit interval graph $\Gamma(v(I)\cap \Phi_J^+)$ with vertex set $[n]$ and edges $\{ij : e_i - e_j \in v(I)\}$. We see now that the natural unit interval graph associated to $H_v$ is indeed $\Gamma_{H,v}$ (where $\Gamma_{H,v}$ is the graph defined in Section~\ref{sec.classical}).  We can now prove Theorem~\ref{bcd}.

\begin{proof}[Proof of Theorem~\ref{bcd}] According to the Brosnan--Chow Theorem~\eqref{brosnanchow},
$$
\sum_{j \geq 0} \fch\left(H^{2j}(\Hess(s,H_v))\right)t^j=\omega(X_{\Gamma_{H,v}}(\vx;t)).
$$
Now Theorem~\ref{bcd} follows immediately from Theorem~\ref{mainthm}.
\end{proof}

\begin{rem} If $\Phi$ is of type $B_n$ or type $C_n$ with $n>2$ then $W$ has only one standard parabolic subgroup isomorphic to $S_n$, so $J$ such that $W_J\simeq S_n$ is uniquely determined.  Both standard maximal parabolics in type $B_2$ or $C_2$ are isomorphic to $S_2$, and these two standard parabolics are not conjugate in $W$.  Which representation of $S_2$ we obtain by restriction of a given dot action can depend on the chosen maximal parabolic.  If $\Phi$ is of type $D_n$ with $n \geq 4$ then $W$ has two standard parabolic subgroups isomorphic to $S_n$, the group $W_J$ mentioned in Theorem~\ref{bcd} and the one obtained by replacing $\alpha_{n-1}$ in $J$ with $\alpha_n$.  If $n$ is odd then these two subgroups are conjugate in $W$, and the representations of $S_n$ obtained from dot actions by restriction to either one of them are isomorphic, assuming ``standard" isomorphisms of these subgroups with $S_n$ in its usual guise.  If $n$ is even, the two given parabolic subgroups are not conjugate in $W$ and in fact intersect different $W$-conjugacy classes nontrivially.  In these cases, there are dot action representations whose restrictions to the respective subgroups are not isomorphic. 
\end{rem}

%%%%%%%%%%%%%

\subsection{Signed permutations} 
Before introducing the key combinatorial construction, we recall that the Weyl groups $W(B_n)= W(C_n)$ and $W(D_n)$ can be identified as signed permutation groups.

Let $[\bar n]:= \{\, \bar i :i \in [n]\}$. A \emph{signed permutation} $v$ on $[n]$ is a bijection on $[n]\cup [\bar n]$ satisfying $v(\bar i)=\bar{v(i)}$ for all $i$.
Such a permutation can be denoted using two-line notation as
\[
v=\left[\begin{array}{cccc|cccc}
     1 & 2 & \cdots & n & \bar n & \bar{n-1} & \cdots & \bar 1 \\
     v(1) & v(2) & \ldots & v(n) & v(\bar n) & v(\bar{n-1}) & \cdots & v(\bar1)
    \end{array}
    \right].
\]
We may also write $v=v(1)v(2)\cdots v(n) v(\bar n)\cdots v(\bar 1)$, which is called the \emph{one-line notation} of $v$. 
Since $v$ is completely determined by its values on $[n]$, this notation can be further abbreviated to $v=v(1)v(2)\cdots v(n)$, which is called the \emph{window notation} of $v$. 

For the classical irreducible root systems of types $B_n$, $C_n$, or $D_n$, every element $v$ of the corresponding Weyl group may be regarded as a signed permutation by defining $e_{v(i)}=v(e_i)$, where we adopt the convention $e_{\bar i}=-e_i$ for $i\in [n]\cup [\bar n]$.
Under this identification, the Weyl group $W(B_n)= W(C_n)$ is isomorphic to the group of signed permutations on $[n]$, while $W(D_n)$ is isomorphic to the subgroup of \emph{even signed permutations}, namely those whose window notation contains an even number of barred entries.

Define a total order $\preceq$ on $[n]\cup [\bar n]$ by
\begin{equation} \label{eq.order}
1 \prec 2 \prec 3 \prec \ldots \prec n  \prec \bar n \prec \bar{n-1} \prec \bar{n-2} \prec \ldots \prec \bar{1}.
\end{equation}
It is straightforward to check that $\preceq$ gives us a particularly simple description of certain positive roots: if $a,b\in [n]\cup [\bar n]$ then $e_a-e_b\in \Phi^+$ if and only if $a\prec b$.
Our next result tells us how to determine which signed permutations are elements of ${^J}W$. 

\begin{lemma} \label{lem.shortest}
Suppose $\Phi$ is an irreducible root system of type $B_n$, $C_n$, or $D_n$ with associated Weyl group $W$ and $v\in W$.  The following are equivalent:
\begin{enumerate}
\item $v\in {^J}W$.
\item For all $1\leq i <j \leq n$, $i$ appears to the left of $j$ in the one-line notation for $v$.
\item $v^{-1}(1)\prec v^{-1}(2)\prec \cdots\prec v^{-1}(n)$.
\end{enumerate}
\end{lemma}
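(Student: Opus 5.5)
The plan is to translate condition (1) into a statement about inversions of $v^{-1}$ on the simple roots in $J$, and then read that statement off in the signed-permutation model using the order $\preceq$ from \eqref{eq.order}.

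For a simple reflection we have $\ell(s_\alpha v)=\ell(v^{-1}s_\alpha)$. The standard fact is that $\ell(v^{-1}s_\alpha)=\ell(v^{-1})+1$ if and only if $v^{-1}(\alpha)\in\Phi^+$. Hence $v\in{^J}W$ if and only if $v^{-1}(\alpha)\in\Phi^+$ for every $\alpha\in J$, that is, $v^{-1}(e_i-e_{i+1})\in\Phi^+$ for $1\le i\le n-1$.

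To get (1)$\Leftrightarrow$(3), I use the convention $e_{v(a)}=v(e_a)$ with $e_{\bar a}=-e_a$, applied to $v^{-1}$. This gives $v^{-1}(e_i-e_{i+1})=e_{v^{-1}(i)}-e_{v^{-1}(i+1)}$. Since $v^{-1}(i)$ and $v^{-1}(i+1)$ lie in $[n]\cup[\bar n]$, the positivity criterion stated just before the lemma says this root is positive exactly when $v^{-1}(i)\prec v^{-1}(i+1)$. Chaining these $n-1$ inequalities gives (3).

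One point needs checking: $v^{-1}(i)$ and $v^{-1}(i+1)$ are never $a$ and $\bar a$ for the same $a$, because $v^{-1}$ is a signed permutation. So $e_{v^{-1}(i)}-e_{v^{-1}(i+1)}$ really is a root of the form $\pm e_a\pm e_b$ with $a\ne b$, and in particular it is nonzero. The criterion holds in types $B$, $C$ and $D$ alike, since roots $\pm e_a\pm e_b$ with $a\neq b$ exist in all three.

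To get (2)$\Leftrightarrow$(3), I read the one-line notation $v(1)\cdots v(n)v(\bar n)\cdots v(\bar 1)$ as listing the values $v(x)$ in the order of positions $x$ under $\preceq$. The value $i$ sits in position $x=v^{-1}(i)$. So "$i$ appears to the left of $j$" means exactly $v^{-1}(i)\prec v^{-1}(j)$. Condition (3) for consecutive pairs is therefore equivalent to (2) for all pairs $i<j$, by transitivity of $\prec$ in one direction and by specializing to $j=i+1$ in the other.

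I expect no real obstacle here. The only care needed is sign bookkeeping: confirming that $v^{-1}$ acts on $e_i$ by $e_i\mapsto e_{v^{-1}(i)}$ under the stated identification, and that the left-versus-right coset convention correctly turns the condition into one about $v^{-1}$ applied to simple roots.
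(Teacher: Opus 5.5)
Your proposal is correct and follows the paper's proof essentially step for step. You convert $\ell(s_\alpha v)=\ell(v)+1$ into $v^{-1}(\alpha)\in\Phi^+$ via $\ell(w)=\ell(w^{-1})$, apply the $\prec$-positivity criterion to $e_{v^{-1}(i)}-e_{v^{-1}(i+1)}$, and read (2)$\Leftrightarrow$(3) directly off the one-line notation; your extra check that $v^{-1}(i)$ and $v^{-1}(i+1)$ are never of the form $a,\bar a$ is harmless added care that the paper leaves implicit.
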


\begin{proof} The equivalence of (2) and (3) follows directly from the definition of the one-line notation and the total order $\prec$.

Next we prove (1) is equivalent to (3). It suffices to show $v\in {^J}W$ if and only if $i$ appears to the left of $i+1$ for all $1\le i\le n-1$.
Since $v\in {^J}W$ and $\ell(w)=\ell(w^{-1})$ for any $w\in W$, we have that for any $\alpha\in J$,
\[
    \ell(s_{\alpha}v)=\ell(v)+1 \Leftrightarrow \ell(v^{-1}s_{\alpha})=\ell(v^{-1})+1 \Leftrightarrow v^{-1}(\alpha)\in \Phi^+.
\]
Recall that $J=\{e_i-e_{i+1} : 1\le i\le n-1\}$. We have 
$v^{-1}(e_i-e_{i+1})=e_{v^{-1}(i)}-e_{v^{-1}(i+1)}$ is a positive root for all $1\le i\le n-1$ if and only if 
\[
    v^{-1}(1)\prec v^{-1}(2)\prec \cdots\prec v^{-1}(n),
\]
as desired.
\end{proof}

\begin{example} \label{ex:ParabolicQuotient}
Let $\Phi= B_3$ or $\Phi= C_3$. We have $J=\{\alpha_1=e_1-e_2, \alpha_2=e_2-e_3\}$ and $W_J\cong S_3$.
The following displays the $8$ elements in ${^J}W$, both their reduced word and one-line notations.  

\begin{align*}
e &= 1~2~3~\bar{3}~\bar{2}~\bar{1} &\hspace{5em}  s_3 &= 1~2~\bar{3}~3~\bar{2}~\bar{1} \\
s_3s_2 &= 1~\bar{3}~2~\bar{2}~3~\bar{1} & s_3s_2s_1 &= \bar{3}~1~2~\bar{2}~\bar{1}~3 \\
s_3s_2s_3 &= 1~\bar{3}~\bar{2}~2~3~\bar{1}  & s_3s_2s_3s_1 &= \bar{3}~1~\bar{2}~2~\bar{1}~3 \\
s_3s_2s_3s_1s_2 &= \bar{3}~\bar{2}~1~\bar{1}~2~3  & s_3s_2s_3s_1s_2s_3 &= \bar{3}~\bar{2}~\bar{1}~1~2~3
\end{align*}
Note that each element has $1$ appearing to the left of $2$ and $2$ appearing to the left of $3$ in the one-line notation, as indicated in Lemma~\ref{lem.shortest}.
\end{example}

\subsection{Graph-theoretic description and a combinatorial formula for the irreducible decomposition} Continuing with our assumption that $\Phi$ is an irreducible root system of type $B_n$, $C_n$, or $D_n$, we will define, for each order ideal $I \subseteq \Phi^+$, a graph $\Gamma(I)$ from which one can determine the restriction of the dot action representation to $W_J$ in a simple combinatorial way. The resulting formulas are similar to those used for naturally labeled unit internal graphs to determine the dot action representation in type $A$.

\begin{definition} Let $I\subseteq \Phi^+$ be an order ideal.
The vertex set $V(I)$ of the graph $\Gamma(I)$ is $[n] \cup \{0\} \cup [ \bar n]$.  The edge set $E(I)$ of $\Gamma(I)$ is determined by the following rules.
\begin{itemize}
\item The graph $\Gamma(I)$ is invariant under the permutation that maps $j$ to $\bar j$ for each $j \in V(I)$.
\item For $1 \leq i<j \leq n$, the edge $\{i,j\}$ lies in $E(I)$ if and only if $e_i-e_j \in I$.
\item For $1 \leq i<j \leq n$, the edge $\{i,\bar j\}$ lies in $E(I)$ if and only if $e_i+e_j \in I$.
\item For $i \in [n]$, the edge $\{i,\bar i \}$ lies in $E(I)$ if and only if $2e_i \in I$.
\item For $i \in [n]$, the edge $\{0,i\}$ lies in $E(I)$ if and only if $e_i \in I$.
\end{itemize}
The last item implies that $0$ is an isolated vertex when $\Phi = C_n$ or $\Phi= D_n$, so we omit that vertex from $\Gamma(I)$ in those cases.
\end{definition}

\begin{figure}[htbp]
    \centering
    \begin{subfigure}[c]{0.48\textwidth}
        \centering
        \begin{adjustbox}{max width=\textwidth}
            \begin{tikzpicture}[
                vertex/.style={font=\scriptsize}
            ]
                \def\df{20}
                \def\w{1 cm}

                \coordinate (100) at (0,0);
                \coordinate (010) at (2,0);
                \coordinate (001) at (4,0);
                \coordinate (110) at (1,1);
                \coordinate (011) at (3,1);
                \coordinate (111) at (2,2);
                \coordinate (021) at (4,2);
                \coordinate (121) at (3,3);
                \coordinate (221) at (4,4);
                
                \path[fill=blue!\df, draw=blue!\df, line width=\w,line cap=round, line join=round] 
                (100) to (110) to (010) to (011) to (001);
                \path[fill=blue!\df, draw=blue!\df, line width=\w,line cap=round, line join=round] 
                (011) to (021);

                \node[vertex] (p100) at (100) {$e_1-e_2$};
                \node[vertex] (p010) at (010) {$e_2-e_3$};
                \node[vertex] (p001) at (001) {$2e_3$};
                \node[vertex] (p110) at (110) {$e_1-e_3$};
                \node[vertex] (p011) at (011) {$e_2+e_3$};
                \node[vertex] (p111) at (111) {$e_1+e_3$};
                \node[vertex] (p121) at (121) {$e_1+e_2$};
                \node[vertex] (p021) at (021) {$2e_2$};
                \node[vertex] (p221) at (221) {$2e_1$};
            
                \draw[very thick] (p100)--(p110)--(p111)--(p121)--(p221);
                \draw[very thick] (p110)--(p010)--(p011)--(p021)--(p121);
                \draw[very thick] (p001)--(p011)--(p111);
            \end{tikzpicture}
        \end{adjustbox}
    \end{subfigure}%
    \hfill
    \begin{subfigure}[c]{0.48\textwidth}
        \centering
        \begin{adjustbox}{max width=\textwidth}
            \begin{tikzpicture}[
                vertex/.style={circle, draw, minimum size=0.65 cm, inner sep=0pt},
                every edge/.style={draw, thick}
            ]
                %Vertices
                \node[vertex] (v1) at (0,0) {\tiny $1$};
                \node[vertex] (v2) at (1.2,0) {\tiny $2$};
                \node[vertex] (v3) at (2.4,0) {\tiny $3$};
                \node[vertex] (v4) at (3.6,0) {\tiny $\bar 3$};
                \node[vertex] (v5) at (4.8,0) {\tiny $\bar 2$};
                \node[vertex] (v6) at (6.0,0) {\tiny $\bar 1$};
                
                \draw (v1)--(v2)--(v3)--(v4)--(v5)--(v6);
                \draw (v1) to [out=45, in=135] (v3) (v2) to [out=45, in=135] (v4);
                \draw (v3) to [out=45, in=135] (v5) (v4) to [out=45, in=135] (v6);
                \draw (v2) to [out=70, in=110] (v5);
            \end{tikzpicture}   
        \end{adjustbox}
    \end{subfigure}
    \caption{An ideal $I$ in $\Phi=C_3$ and corresponding graph $\Gamma(I)$.}
    \label{fig:C3}
\end{figure}

\begin{figure}[htbp]
    \centering
    \begin{subfigure}[c]{0.48\textwidth}
        \centering
        \begin{adjustbox}{max width=\textwidth}
            \begin{tikzpicture}[
                vertex/.style={font=\scriptsize}
            ]
                \def\df{9}
                \def\w{1 cm}

                \coordinate (100) at (0,0);
                \coordinate (010) at (2,0);
                \coordinate (001) at (4,0);
                \coordinate (110) at (1,1);
                \coordinate (011) at (3,1);
                \coordinate (111) at (2,2);
                \coordinate (021) at (4,2);
                \coordinate (121) at (3,3);
                \coordinate (221) at (4,4);
                
                \path[fill=blue!20, draw=blue!20, line width=\w,line cap=round, line join=round] 
                (100) to (110) to (010) to (011) to (001);
                \path[fill=blue!20, draw=blue!20, line width=\w,line cap=round, line join=round] 
                (011) to (021);

                \node[vertex] (p100) at (100) {$e_1-e_2$};
                \node[vertex] (p010) at (010) {$e_2-e_3$};
                \node[vertex] (p001) at (001) {$e_3$};
                \node[vertex] (p110) at (110) {$e_1-e_3$};
                \node[vertex] (p011) at (011) {$e_2$};
                \node[vertex] (p111) at (111) {$e_1$};
                \node[vertex] (p121) at (121) {$e_1+e_3$};
                \node[vertex] (p021) at (021) {$e_2+e_3$};
                \node[vertex] (p221) at (221) {$e_1+e_2$};
            
                \draw[very thick] (p100)--(p110)--(p111)--(p121)--(p221);
                \draw[very thick] (p110)--(p010)--(p011)--(p021)--(p121);
                \draw[very thick] (p001)--(p011)--(p111);
            \end{tikzpicture}
        \end{adjustbox}
    \end{subfigure}%
    \hfill
    \begin{subfigure}[c]{0.48\textwidth}
        \centering
        \begin{adjustbox}{max width=\textwidth}
            \begin{tikzpicture}[
                vertex/.style={circle, draw, minimum size=0.65 cm, inner sep=0pt},
                every edge/.style={draw, thick}
            ]
                %Vertices
                \node[vertex] (v1) at (0,0) {\tiny $1$};
                \node[vertex] (v2) at (1.2,0) {\tiny $2$};
                \node[vertex] (v3) at (2.4,0) {\tiny $3$};
                \node[vertex] (v7) at (3.6,0) {\tiny $0$};
                \node[vertex] (v4) at (4.8,0) {\tiny $\bar 3$};
                \node[vertex] (v5) at (6.0,0) {\tiny $\bar 2$};
                \node[vertex] (v6) at (7.2,0) {\tiny $\bar 1$};
                
                \draw (v1)--(v2)--(v3)--(v7)--(v4)--(v5)--(v6);
                \draw (v1) to [out=45, in=135] (v3) (v4) to [out=45, in=135] (v6);
                \draw (v2) to [out=45, in=135] (v7) (v7) to [out=45, in=135] (v5);
                \draw (v2) to [out=70, in=110] (v4) (v3) to [out=70, in=110] (v5);
            \end{tikzpicture}   
        \end{adjustbox}
    \end{subfigure}
    \caption{An ideal $I$ in $\Phi=B_3$ and corresponding graph $\Gamma(I)$.}
    \label{fig:B3} 
\end{figure}

\begin{figure}[htbp]
    \centering
    \begin{subfigure}[c]{0.48\textwidth}
        \centering
        \begin{adjustbox}{max width=\textwidth}
            \begin{tikzpicture}[
                vertex/.style={font=\scriptsize}
            ]
                \def\df{9}
                \def\w{1 cm}

                \coordinate (1000) at (0,0);
                \coordinate (0100) at (2,0);
                \coordinate (0010) at (4,0);
                \coordinate (0001) at (6,0);
                \coordinate (1100) at (1,1);
                \coordinate (0110) at (3,1);
                \coordinate (0101) at (5,1);
                \coordinate (1110) at (2,2);
                \coordinate (1101) at (4,2);
                \coordinate (0111) at (6,2);
                \coordinate (1111) at (5,3);
                \coordinate (1211) at (6,4);
                
                \path[fill=blue!20, draw=blue!20, line width=\w,line cap=round, line join=round] 
                (1000) to (1100) to (0100) to (0110) to (0010) (0110) to (0111);
                \path[fill=blue!20, draw=blue!20, line width=\w,line cap=round, line join=round] 
                (0100) to (0101) to (0001) (0101) to (0111);

                \node[vertex] (p1000) at (1000) {$e_1-e_2$};
                \node[vertex] (p0100) at (0100) {$e_2-e_3$};
                \node[vertex] (p0010) at (0010) {$e_3-e_4$};
                \node[vertex] (p0001) at (0001) {$e_3+e_4$};
                \node[vertex] (p1100) at (1100) {$e_1-e_3$};
                \node[vertex] (p0110) at (0110) {$e_2-e_4$};
                \node[vertex] (p0101) at (0101) {$e_2+e_4$};
                \node[vertex] (p1110) at (1110) {$e_1-e_4$};
                \node[vertex] (p1101) at (1101) {$e_1+e_4$};
                \node[vertex] (p0111) at (0111) {$e_2+e_3$};
                \node[vertex] (p1111) at (1111) {$e_1+e_3$}; 
                \node[vertex] (p1211) at (1211) {$e_1+e_2$};

                \draw[very thick] (p1000)--(p1100)--(p1110)--(p0110)--(p0010);
                \draw[very thick] (p1100)--(p0100)--(p0110);
                \draw[very thick] (p1100)--(p1101)--(p1111)--(p1211);
                \draw[very thick] (p1110)--(p1111)--(p0111);
                \draw[very thick] (p0100)--(p0101)--(p0111);
                \draw[very thick] (p1101)--(p0101)--(p0001);
                \draw[very thick] (p0110)--(p0111);
            \end{tikzpicture}
        \end{adjustbox}
    \end{subfigure}%
    \hfill
    \begin{subfigure}[c]{0.48\textwidth}
        \centering
        \begin{adjustbox}{max width=\textwidth}
            \begin{tikzpicture}[
                vertex/.style={circle, draw, minimum size=0.65 cm, inner sep=0pt},
                every edge/.style={draw, thick}]
                %Vertices
                \node[vertex] (v1) at (0,0) {\tiny $1$};
                \node[vertex] (v2) at (1.2,0) {\tiny $2$};
                \node[vertex] (v3) at (2.4,0) {\tiny $3$};
                \node[vertex] (v4) at (3.6,0) {\tiny $4$};
                \node[vertex] (v5) at (4.8,0) {\tiny $\bar4$};
                \node[vertex] (v6) at (6.0,0) {\tiny $\bar3$};
                \node[vertex] (v7) at (7.2,0) {\tiny $\bar2$};
                \node[vertex] (v8) at (8.4,0) {\tiny $\bar1$};
                
                \draw (v1)--(v2)--(v3)--(v4) (v5)--(v6)--(v7)--(v8);
                \draw (v3) to [out=45, in=135] (v5) (v4) to [out=45, in=135] (v6);
                \draw (v2) to [out=60, in=120] (v5) (v2) to [out=70, in=110] (v6);
                \draw (v1) to [out=45, in=135] (v3) (v6) to [out=45, in=135] (v8);
                \draw (v2) to [out=45, in=135] (v4) (v5) to [out=45, in=135] (v7);
                \draw (v3) to [out=70, in=110] (v7) (v4) to [out=60, in=120] (v7);
            \end{tikzpicture}
        \end{adjustbox}
    \end{subfigure}
    \caption{An ideal $I$ in $\Phi=D_4$ and corresponding graph $\Gamma(I)$.}
    \label{fig:D4}
\end{figure}

See Figures~\ref{fig:C3}, \ref{fig:B3}, and~\ref{fig:D4} for examples of ideals in root systems of type $B_3$, $C_3$, and $D_4$ and their corresponding graphs.

Recall that $H=H(I)$. Our next lemma tells us how to identify each graph $\Gamma_{H,v}$ for each shortest coset representative $v\in {^J}W$ as an induced subgraph of $\Gamma(I)$, up to relabeling.

\begin{lemma} \label{lem.graphdef} Let $v\in {^J W}$ and $I\subseteq \Phi^+$ an order ideal with associated Hessenberg space $H=H(I)$. Then $\Gamma_{H, v}$ is the induced subgraph of $\Gamma(I)$ on the vertex set $\{ v^{-1}(j) : j\in [n] \}$ after relabeling each vertex $v^{-1}(j)$ by $j$.
\end{lemma}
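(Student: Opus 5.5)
The plan is to compare edge sets directly. We use the signed-permutation description of $W$ and the convention $e_{\bar a}=-e_a$. For $1\le i<j\le n$, the pair $ij$ is an edge of $\Gamma_{H,v}$ exactly when $v^{-1}(e_i-e_j)\in I$. Since $v$ acts by $v(e_a)=e_{v(a)}$, we have
\[
v^{-1}(e_i-e_j)=e_{a}-e_{b},\qquad a:=v^{-1}(i),\; b:=v^{-1}(j),
\]
where $a,b\in[n]\cup[\bar n]$. By Lemma~\ref{lem.shortest}(3), $v\in{^J}W$ gives $a\prec b$. Hence $e_a-e_b$ is a positive root, consistent with the requirement that it lie in $I$.

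The proof then reduces to a single identity: for $a\prec b$ in $[n]\cup[\bar n]$ with $b\neq \bar a$... (and also allowing $b=\bar a$), the pair $\{a,b\}$ is an edge of $\Gamma(I)$ if and only if $e_a-e_b\in I$. I would check this by cases on the signs of $a$ and $b$, using the order~\eqref{eq.order}.

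\begin{itemize}
\item If $a=p$ and $b=q$ with $p<q$ unbarred, then $e_p-e_q\in I$ is exactly the rule for the edge $\{p,q\}$.
\item If $a=p$ is unbarred and $b=\bar q$ with $q\neq p$, then $e_a-e_b=e_p+e_q$. This is the rule for the edge $\{\min(p,q),\overline{\max(p,q)}\}$. Applying the bar-involution symmetry of $\Gamma(I)$, this edge is the same as $\{p,\bar q\}$ whichever of $p,q$ is smaller.
\item If $b=\bar a$ with $a=p$, then $e_a-e_b=2e_p$. This matches the edge $\{p,\bar p\}$ in type $C$. In types $B$ and $D$, $2e_p$ is not a root, and no such edge exists, so both sides are false.
\item If $a=\bar q$ and $b=\bar p$ with $\bar q\prec\bar p$, i.e.\ $p<q$, then $e_a-e_b=-e_q+e_p=e_p-e_q$. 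The edge $\{\bar q,\bar p\}$ is the bar-image of $\{q,p\}$, so it is present iff $e_p-e_q\in I$.
\end{itemize}

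The case $a$ barred and $b$ unbarred cannot occur, because $a\prec b$. The vertex $0$ never appears in the image $\{v^{-1}(j)\}$ and is irrelevant.

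Combining the cases, $ij\in E_{H,v}$ iff $\{v^{-1}(i),v^{-1}(j)\}\in E(I)$. This says that the bijection $j\mapsto v^{-1}(j)$ is a graph isomorphism from $\Gamma_{H,v}$ onto the induced subgraph of $\Gamma(I)$ on $\{v^{-1}(j): j\in[n]\}$, which is the claim.

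The only delicate point is bookkeeping in the mixed case: confirming that the invariance of $\Gamma(I)$ under the bar involution makes the rule for $\{i,\bar j\}$ with $i<j$ also cover $\{j,\bar i\}$. I also need to confirm that $v^{-1}$ on roots is given by $e_c\mapsto e_{v^{-1}(c)}$ for barred $c$ as well, which follows from $v(\bar a)=\overline{v(a)}$.
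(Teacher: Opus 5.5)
Your proposal is correct and follows the paper's argument: you translate $ij\in E_{H,v}$ into $e_{v^{-1}(i)}-e_{v^{-1}(j)}\in I$, note $v^{-1}(i)\prec v^{-1}(j)$ by Lemma~\ref{lem.shortest}, and then read off the corresponding edge of $\Gamma(I)$ from its definition. Your sign-by-sign case check spells out what the paper calls ``clear''; your extra case $b=\bar a$ is handled correctly but can never occur, since $v^{-1}(j)=\overline{v^{-1}(i)}=v^{-1}(\bar i)$ would force $j=\bar i$.
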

\begin{proof} Suppose $\gamma \in v(I)\cap \Phi_J^+$. Since $J = \{e_i - e_{i+1}: 1\leq i \leq n-1\}$ and $v\in {^J}W$, there exists $i,j\in [n]$ with $i<j$ such that
\begin{align}\label{eqn.graph.pf}
\gamma = e_i-e_j \in v(I)\cap \Phi_J^+ &\Leftrightarrow e_{v^{-1}(i)} - e_{v^{-1}(j)} \in I  \, \text{ and } \, v^{-1}(i)\prec v^{-1}(j).
\end{align}
Because $v^{-1}(i)\prec v^{-1}(j)$ we know $e_{v^{-1}(i)} - e_{v^{-1}(j)} = e_{a}-e_b$ or $e_{v^{-1}(i)} - e_{v^{-1}(j)} = e_a+e_b$ for some $a,b\in [n]$ with $a<b$. 
Now it's clear from the definition of $\Gamma(I)$ that the RHS of~\eqref{eqn.graph.pf} is equivalent to $\{v^{-1}(i), v^{-1}(j)\}\in E(I)$. 
\end{proof}

For each $v \in {^J}W$, we define $\Gamma(I)_v$ to be the induced subgraph of $\Gamma(I)$ on $\{v^{-1}(j):j \in [n]\}$. 
Next, we aim to compute $\ell_H(v)$ using the graphs $\Gamma(I)_v$ and $\Gamma(I)$. 
Define a total order $\preceq$ on the vertex set $[n] \cup \{0\} \cup [\bar n]$ of $\Gamma(I)$ by
\begin{equation} \label{eq.order2}
1 \prec 2 \prec 3 \prec \ldots \prec n \prec 0 \prec \bar n \prec \bar{n-1} \prec \bar{n-2} \prec \ldots \prec \bar{1}.
\end{equation}
Note that $\preceq$ restricts to~\eqref{eq.order} on $[n]\cup [\bar{n}]$. 
Let $\iota$ be the involution on $[n] \cup \{0\} \cup [\bar n]$ fixing $0$ and exchanging $j$ and $\bar j$ for each $j \in [n]$.  We observe that $\Gamma(I)$ is $\iota$-invariant, thus $\iota$ acts on the edge set $E(I)$ of $\Gamma(I)$.

\begin{cor} \label{cor.hlength}
If $v \in {^J}W$ then $\ell_H(v)$ is equal to the number of $\iota$-orbits on $E(I)$ consisting of edges $\{a,b\}$ in $\Gamma(I)$ such that:
\begin{equation}\label{eqn.edge.count}
a\prec b,\;\;  b\in \{ v^{-1}(j) : j\in [n] \}, \; \text{ and } \;\; a\notin  \{ v^{-1}(j) : j\in [n] \}.
\end{equation}
\end{cor}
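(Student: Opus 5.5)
The plan is to rewrite $\ell_H(v)$ as a count of roots of $I$ sent to negative roots by $v$, and then match roots of $I$ with $\iota$-orbits of edges of $\Gamma(I)$. Since $\Phi_H^-=-I$, we have
\[
\ell_H(v)=|\{\alpha\in -I: v(\alpha)\in\Phi^+\}|=|\{\beta\in I: v(\beta)\in\Phi^-\}|.
\]
Extend the conventions of the signed-permutation subsection by setting $e_0:=0$ and $v(0)=0$, and use the total order \eqref{eq.order2}.

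\textbf{Step 1: roots of $I$ versus $\iota$-orbits of edges.} Every $\beta\in\Phi^+$ can be written as $e_a-e_b$ with $a\prec b$ in $[n]\cup\{0\}\cup[\bar n]$; the case $2e_i=e_i-e_{\bar i}$ is included. The representations of $\beta$ in this form are exactly $\{a,b\}$ and $\{\iota(b),\iota(a)\}$; these coincide for $2e_i$, giving an orbit of size one. Comparing with the definition of $E(I)$ shows that $\beta\mapsto\{\{a,b\},\{\iota b,\iota a\}\}$ is a bijection from $I$ to the set of $\iota$-orbits on $E(I)$. Moreover, $v(\beta)=e_{v(a)}-e_{v(b)}$, and by the remark after \eqref{eq.order} (extended to include $0$) this root is negative if and only if $v(b)\prec v(a)$.

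\textbf{Step 2: when is $v(\beta)$ negative?} Let $S=\{v^{-1}(j):j\in[n]\}$. Then $S$, $\iota(S)$ and $\{0\}$ partition the vertex set, and $0$ is fixed by $v$. By Lemma~\ref{lem.shortest}(3), $v^{-1}$ is order-preserving from $[n]$ onto $S$, so $v$ is order-preserving on $S$. Since barring reverses $\prec$ and commutes with $v$, $v$ is also order-preserving on $\iota(S)$. I then run through cases for an edge $\{a,b\}$ with $a\prec b$.
\begin{enumerate}
\item If $a,b\in S$ or $a,b\in\iota(S)$, there is no inversion, since $v$ is order-preserving on each of these sets.
\item If $a\in S$ and $b\notin S$, then $v(a)\in[n]\prec v(b)\in\{0\}\cup[\bar n]$, so there is no inversion.
\item If $b\in S$ and $a\notin S$, then $v(b)\in[n]\prec v(a)$, so $\beta$ is an inversion.
\end{enumerate}
Hence $v(\beta)\in\Phi^-$ if and only if the edge $\{a,b\}$ satisfies \eqref{eqn.edge.count}.

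\textbf{Step 3: consistency across the orbit.} It remains to check that this criterion does not depend on which edge of the orbit is used. Applying $\iota$ swaps the roles of $S$ and $\iota(S)$ and reverses $\prec$. So $\{a,b\}$ satisfies \eqref{eqn.edge.count} if and only if $\{\iota b,\iota a\}$ does: we have $\iota a\in S$ and $\iota b\notin S$, with $\iota b\prec\iota a$. Counting orbits therefore gives exactly $\ell_H(v)$.

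The main obstacle is the case bookkeeping in Step 2 involving the vertex $0$ and the edges $\{i,\bar i\}$. For the vertex $0$, one needs $0\notin S$ and the placement of $0$ in \eqref{eq.order2} to make the comparison in the third case valid. For the edges $\{i,\bar i\}$, one of $i,\bar i$ lies in $S$ and the other in $\iota(S)$, so they fall into the second or third case. I would check these cases explicitly.
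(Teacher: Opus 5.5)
Your approach is the same as the paper's: you match $I$ with the $\iota$-orbits on $E(I)$ via $\beta=e_a-e_b$, then use Lemma~\ref{lem.shortest} to decide when $v(\beta)\in\Phi^-$. However, in type $B_n$ the check at the vertex $0$ that you deferred is exactly where Steps 2 and 3 break.

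\textbf{The gap.} Your case list omits the pairs $a\in\iota(S)$, $b=0$ and $a=0$, $b\in\iota(S)$. The second is harmless: there is no inversion and the condition fails. In the first, the root is inverted although the edge fails \eqref{eqn.edge.count}. Here $a=i\in[n]$ with $v(i)\in[\bar n]$, the edge $\{i,0\}$ comes from $e_i\in I$, and $v(e_i)=e_{v(i)}\in\Phi^-$, yet $b=0\notin S$.

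Step 3 fails for the same reason, because $\iota(0)=0\notin S$: if $\{0,b\}$ satisfies \eqref{eqn.edge.count}, its partner $\{\iota b,0\}$ never does. For instance, take $B_2$ with $I=\{e_2\}$ and $v=1\,\bar 2\in{^J}W$. Then $S=\{1,\bar 2\}$ and $\ell_H(v)=1$, but in the orbit $\{\{2,0\},\{0,\bar 2\}\}$ only $\{0,\bar 2\}$ satisfies the condition.

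So the per-edge equivalence in Step 2 and the $\iota$-invariance in Step 3 are both false. Together they would establish the reading ``every edge of a counted orbit satisfies \eqref{eqn.edge.count}'', and this example refutes that reading.

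\textbf{The repair.} Argue per orbit: $v(\beta)\in\Phi^-$ if and only if \emph{some} edge in the orbit of $\beta$ satisfies \eqref{eqn.edge.count}.
For ``if'', apply your case 3 to the qualifying edge; both edges of the orbit represent $\beta$.
For ``only if'', the completed case analysis shows that $v(b)\prec v(a)$ forces either $b\in S$, $a\notin S$, or $a\in\iota(S)$, $b=0$. In the latter case the partner edge $\{0,\iota a\}$ qualifies.

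Thus the corollary is correct when read as counting $\iota$-orbits that \emph{contain} an edge satisfying \eqref{eqn.edge.count}. This is how the paper's figures count: for $v=1\,2\,\bar 3$ in Example~\ref{ex.B3.full}, only $\{0,\bar 3\}$ is marked.

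The paper's own proof makes the same slip. It asserts that \eqref{eqn.edge.count} is $\iota$-invariant and that both $\{0,b\}$ and $\{\bar b,0\}$ satisfy it. In types $C_n$ and $D_n$, where $0$ is isolated, your argument is correct as written.
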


\begin{proof} Note that~\eqref{eqn.edge.count} holds for an edge $\{a,b\}$ if and only if it holds for $\{\bar{b}, \bar{a}\}$ so this condition is indeed well-defined on the $\iota$-orbits in $E(\Gamma)$. We prove the corollary by direct inspection, after recalling that $\ell_H(v)$ is the number of $\alpha \in I$ such that $v(\alpha) \in \Phi^-$. 

First, suppose $a=0$ and $\{0,b\}$ is an edge in $\Gamma(I)$ satisfying~\eqref{eqn.edge.count}. This implies $b\in [\bar n]$, $v(b) \in [n]$, and $e_{\bar b}\in I$. Now $\alpha = e_{\bar b}\in I$ such that $v(\alpha) = v(e_{\bar b}) = -e_{v(b)}\in \Phi^-$.  

On the other hand, let $\bar b\in [n]$ such that $e_{\bar b}\in I$ and $v(e_{\bar b})=-e_{v(b)}\in \Phi^-$. This implies $v(b)\in [n]$ and both $\{0, b\}$ and $\{\bar b, 0\}$ are edges of $\Gamma(I)$ satisfying~\eqref{eqn.edge.count}. We conclude there is a bijection between $\iota$-orbit of edges $\{0,b\}$ satisfying~\eqref{eqn.edge.count} and roots $\alpha = e_{\bar b}\in I$ such that $v(\alpha)\in \Phi^-$.

Now suppose $a\neq 0$ and $\{a,b\}\in E(I)$ satisfies~\eqref{eqn.edge.count}. This implies $e_a-e_b\in I$, $v(a)\in [\bar n]$, and $v(b)\in [n]$. Because $v(b)\prec v(a)$, we get that $v(e_a-e_b) = e_{v(a)}-e_{v(b)}\in \Phi^-$. 

On the other hand, let $a,b\in [n]\cup [\bar n]$ such that  $\alpha = e_a-e_b\in I$ and $v(\alpha) = e_{v(a)}-e_{v(b)}\in \Phi^-$. Since $I$ is a subset of positive roots, we have $a\prec b$.  The fact that $e_{v(a)}-e_{v(b)}\in \Phi^-$ implies $v(b)\prec v(a)$. Because $v\in {^J}W$, Lemma~\ref{lem.shortest} now implies $v(b)\in [n]$ and $v(a)\in [\bar n]$ and the edges $\{a,b\}$ and $\{\bar b, \bar a\}$, which might be equal, satisfy~\eqref{eqn.edge.count}. Thus, there is a bijection between $\iota$-orbits of edges $\{a,b\}\in E(I)$ with $a\neq 0$ that satisfy~\eqref{eqn.edge.count} and roots $e_a-e_b\in I$ such that $v(e_a-e_b)\in \Phi^-$. This concludes the proof.
\end{proof}

By Corollary~\ref{cor.hlength}, we can read the value of $\ell_H(v)$ for each $v\in {^J}W$ using the induced subgraph of $\Gamma(I)$ on $\{v^{-1}(j) : j\in [n]\}$. Indeed, suppose we have drawn $\Gamma(I)$ by writing the vertex set in order, left to right, according to $\preceq$. Then given an induced subgraph on the set $\{v^{-1}(j) : j\in [n]\}$, we simply count the number of edges with left endpoint outside the subgraph and with right endpoint in the subgraph, up to symmetry.

\begin{example} \label{exa: subgraphs}
Let $I$ be the order ideal of $\Phi^+(C_3)$ generated by $e_1-e_3$ and $e_2+e_3$.
Consider the signed permutation $v=1~\bar{3}~2~\bar{2}~3~\bar{1}\in{^J}W$. 
The graph below is $\Gamma(I)$, and the induced subgraph $\Gamma(I)_v$ on $\{v^{-1}(1), v^{-1}(2), v^{-1}(3)\}$ is highlighted in blue.
\[
    \begin{tikzpicture}[
                scale=0.7,
                vertex/.style={circle, draw, minimum size=0.75 cm},
                every edge/.style={draw, thick}
            ]
            \coordinate[circle, minimum size=0.75 cm] (p1) at (0,0);
            \coordinate[circle, minimum size=0.75 cm] (p2) at (2,0);
            \coordinate[circle, minimum size=0.75 cm] (p3) at (4,0);
            \coordinate[circle, minimum size=0.75 cm] (p4) at (6,0);
            \coordinate[circle, minimum size=0.75 cm] (p5) at (8,0);
            \coordinate[circle, minimum size=0.75 cm] (p6) at (10,0);

            \path[draw=blue!20, line width=10,line cap=round, line join=round] 
            (p1) to [out=45, in=135] (p3) to [out=45, in=135] (p5);
            
            %Vertices
            \node[vertex, fill=blue!20, label=below:{}] (v1) at (0,0) {\tiny $1$};
            \node[vertex, label=below:{}] (v2) at (2,0) {\tiny$2$};
            \node[vertex, fill=blue!20, label=below:{}] (v3) at (4,0) {\tiny$3$};
            \node[vertex, label=below:{}] (v4) at (6,0) {\tiny$\bar{3}$};
            \node[vertex, fill=blue!20, label=below:{}] (v5) at (8,0) {\tiny$\bar{2}$};
            \node[vertex, label=below:{}] (v6) at (10,0) {\tiny$\bar{1}$};

            \node (w) at (-1,-1.1) {$v=$};
            \node[circle, fill=red!20, minimum size=0.5 cm] (w1) at (0,-1.1) {$1$};
            \node (w2) at (2,-1.1) {$\bar{3}$};
            \node[circle, fill=red!20, minimum size=0.5 cm] (w3) at (4,-1.1) {$2$};
            \node (w4) at (6,-1.1) {$\bar{2}$};
            \node[circle, fill=red!20 , minimum size=0.5 cm] (w5) at (8,-1.1) {$3$};
            \node (w6) at (10,-1.1) {$\bar{1}$};

            \draw (v1)--(v2) (v3)--(v4) (v5)--(v6);
            \draw[thick, decorate, decoration=zigzag, blue!100] (v2)--(v3) (v4)--(v5);
            \draw (v1) to [out=45, in=135] (v3) (v2) to [out=45, in=135] (v4);
            \draw (v3) to [out=45, in=135] (v5) (v4) to [out=45, in=135] (v6);
        \end{tikzpicture}
\]
To compute $\ell_H(v)$, observe that there are $2$ edges, $\{2,3\}$ and $\{\bar3,\bar2\}$ (indicated by blue zigzags), whose right endpoints are in $\Gamma(I)_v$ and whose left endpoints are not. Since these two edges are in the same $\iota$-orbit, we have $\ell_H(v)=1$.
\end{example}

The chromatic quasisymmetric function $X_\Gamma$ is well-defined on any graph $\Gamma$ with totally ordered vertex set: an ascent of a coloring $\kappa$ is any edge $vw$ such that $v$ precedes $w$ in the given total order and $\kappa(v)<\kappa(w)$. For any $v\in {^J}W$, we consider the vertices of the induced subgraph $\Gamma(I)_v$ on $\{v^{-1}(j)\mid j\in [n]\}$ to have total order induced by that on $[n]$.   We now use the graph $\Gamma(I)$ to give a combinatorial formula for the restriction of the dot action representation on $H^*(\Hess(s,H(I)))$ to $W_J$. 

\begin{thm} \label{thm.graph}
Assume $\Phi$ is an irreducible root system of type $B_n$, $C_n$, or $D_n$, $I$ is an order ideal in $\Phi^+$, and let $H=H(I)$ be the associated Hessenberg space.  Let $s$ be a regular semisimple element of the associated Lie algebra.  Assume the vertex set of $\Gamma(I)$ is ordered by $\preceq$ as in \eqref{eq.order2}.  For $v \in {^J}W$, let 
\[
\ell_{\Gamma(I)}(v):= |\{ab \in E(I) : a \prec b,\, a \notin \{v^{-1}(j): j\in [n]\}, \, v(b) \in \{v^{-1}(j): j\in [n]\}\,  \}|
\]
and $\Gamma(I)_v$ be the induced subgraph of $\Gamma(I)$ on the vertex set $\{ v^{-1}(j) : j\in [n] \}$. 
Then
$$
\fch\left(\left(\sum_{j \geq 0} H^{2j}(\Hess(s,H))t^j\right)\Big\downarrow^W_{W_J}\right)=\sum_{v \in {^J}W}t^{\ell_{\Gamma(I)}(v)} \omega \left( X_{\Gamma(I)_v}(\vx;t)\right).
$$
\end{thm}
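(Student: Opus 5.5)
The plan is to derive Theorem~\ref{thm.graph} directly from Theorem~\ref{bcd}, which already gives
$$
\fch\left(\left(\sum_{j \geq 0} H^{2j}(\Hess(s,H))t^j\right)\Big\downarrow^W_{W_J}\right)=\sum_{v \in {^J}W}t^{\ell_H(v)}\,\omega\left(X_{\Gamma_{H,v}}(\vx;t)\right).
$$
It then suffices to match the two ingredients of each summand with those in the statement. The first is $X_{\Gamma_{H,v}}=X_{\Gamma(I)_v}$. The second is the identification of the exponent $\ell_{\Gamma(I)}(v)$, as it is defined in the statement, with $\ell_H(v)$. Both identifications are made separately for each fixed $v\in{^J}W$.

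For the chromatic quasisymmetric functions I will use Lemma~\ref{lem.graphdef}. It says that $\Gamma_{H,v}$ is the induced subgraph $\Gamma(I)_v$ on $S_v:=\{v^{-1}(j):j\in[n]\}$, relabeled by $v^{-1}(j)\mapsto j$. Since $v\in{^J}W$, Lemma~\ref{lem.shortest}(3) gives $v^{-1}(1)\prec\cdots\prec v^{-1}(n)$. So the relabeling is order preserving from $(S_v,\preceq)$ to $([n],<)$. Because $X_\Gamma(\vx;t)$ depends only on the graph together with the total order used to define ascents, this yields $X_{\Gamma_{H,v}}(\vx;t)=X_{\Gamma(I)_v}(\vx;t)$, with $\Gamma(I)_v$ ordered as in \eqref{eq.order2}. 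In particular, the symmetry statement of Theorem~\ref{bcd}(1) transfers to $\Gamma(I)_v$.

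For the exponent I will invoke Corollary~\ref{cor.hlength}. It expresses $\ell_H(v)$ as the number of $\iota$-orbits of edges $\{a,b\}$ with $a\prec b$, $a\notin S_v$ and $b\in S_v$. I will then compare this orbit count, term by term, with the defining set of $\ell_{\Gamma(I)}(v)$ exactly as written in the statement. The comparison uses the facts recorded in the proof of Corollary~\ref{cor.hlength}. First, $v(x)\in[n]$ exactly when $x\in S_v$. Second, whenever $a\notin S_v$ and $b\in S_v$ with $a\prec b$, one has $v(b)\in[n]$ and $v(a)\in[\bar n]\cup\{0\}$. Third, the involution $\iota$ maps $S_v$ to its complement in $[n]\cup[\bar n]$.

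I expect this comparison of counts to be the main obstacle, since the statement counts edges while the corollary counts $\iota$-orbits. I will split the qualifying edges into three types: edges $\{0,b\}$, edges $\{i,\bar i\}$ coming from $2e_i\in I$, and all remaining edges. For each type I will check by direct inspection of the condition on $a$ and $b$ that the edges counted in $\ell_{\Gamma(I)}(v)$ are in bijection with the $\iota$-orbits counted in Corollary~\ref{cor.hlength}. In particular I must verify that exactly one representative of each orbit is selected, and in each type I will test this on whether $a$ or $b$ lies in $S_v$. Once $\ell_{\Gamma(I)}(v)=\ell_H(v)$ is established, substituting both identities into Theorem~\ref{bcd} completes the proof.
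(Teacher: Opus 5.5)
Your route is the paper's: its proof of this theorem is just Theorem~\ref{bcd} combined with Lemma~\ref{lem.graphdef} and Corollary~\ref{cor.hlength}. Your handling of the chromatic functions is fine: the relabeling $v^{-1}(j)\mapsto j$ is order preserving by Lemma~\ref{lem.shortest}, so $X_{\Gamma_{H,v}}=X_{\Gamma(I)_v}$.

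The gap is in the step you flagged. The promised bijection between qualifying edges and $\iota$-orbits does not exist for your third class of edges. Suppose $\{a,b\}$, with $a\neq 0$ and $b\neq\bar a$, satisfies $a\prec b$, $a\notin S_v$, $b\in S_v$. Then $\iota$ reverses $\preceq$ and, by your own third fact, exchanges $S_v$ with its complement in $[n]\cup[\bar n]$. Hence the distinct edge $\{\bar b,\bar a\}$ also satisfies $\bar b\prec\bar a$, $\bar b\notin S_v$, $\bar a\in S_v$. So both edges of every such orbit are selected. The edge count therefore equals $\ell_H(v)$ plus the number of roots $e_i\pm e_j\in I$ with $v(e_i\pm e_j)\in\Phi^-$. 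Exactly one edge per orbit is selected only for the orbits $\{\{0,b\},\{0,\bar b\}\}$ (the partner fails because $0\notin S_v$) and for the fixed edges $\{i,\bar i\}$.

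The example right after Corollary~\ref{cor.hlength} shows this. In $C_3$, with $I$ generated by $e_1-e_3$ and $e_2+e_3$ and $v=1\bar32$, both $\{2,3\}$ and $\{\bar3,\bar2\}$ qualify, while $\ell_H(v)=1$. The discrepancy does not wash out in the sum either. For $I=\Phi^+(C_3)$ and $v=\bar3\bar2\bar1$ the edge count is $9$. This produces the summand $t^{9}\,\omega X_{K_3}(\vx;t)$, of $t$-degree $12>|I|=\dim_{\C}\Hess(s,H)$. Since all summands are Schur positive, nothing cancels. So with $\ell_{\Gamma(I)}(v)$ counting edges the identity is false, and no case check can establish $\ell_{\Gamma(I)}(v)=\ell_H(v)$.

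What is missing is that the displayed definition must be read the way the paper actually uses it. Namely, $\ell_{\Gamma(I)}(v)$ is the number of $\iota$-orbits of edges $\{a,b\}$ with $a\prec b$, $a\notin\{v^{-1}(j):j\in[n]\}$ and $b\in\{v^{-1}(j):j\in[n]\}$. The condition ``$v(b)\in\{v^{-1}(j)\}$'' is a typo for $b\in\{v^{-1}(j)\}$, equivalently $v(b)\in[n]$. Read literally, it also gives wrong values, for instance $3$ rather than $2$ for $v=1\bar32$ in Example~\ref{ex.B3.full}.

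Orbit counting is how the exponents in Example~\ref{ex.B3.full} are computed: for $v=\bar312$ four edges are marked, but the exponent used is $2$. With this reading, $\ell_{\Gamma(I)}(v)=\ell_H(v)$ is exactly the content of Corollary~\ref{cor.hlength}. No further edge-versus-orbit comparison is needed, and your argument reduces to the paper's one-line proof.
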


\begin{proof}
The theorem follows from Theorem~\ref{bcd}, Lemma~\ref{lem.graphdef}, and Corollary~\ref{cor.hlength}.
\end{proof}

Our next example uses Theorem~\ref{thm.graph} to compute the restriction of the dot action using $\Gamma(I)$.

\begin{example}\label{ex.B3.full}
Let $\Phi=B_3$, $I\subseteq \Phi^+$ be the order ideal generated by $e_1-e_3$ and $e_2$, and $H=H(I)$ be the Hessenberg space associated with $I$.
In this case, $J=\{e_1-e_2, e_2-e_3\}\subset \Delta$ and $W_J \cong S_3$.
We shall use Theorem~\ref{thm.graph} to compute the graded dot action representation on $H^*(\Hess(s,H(I)))$ restricted from $W(B_3)$ to $W_J$. The eight signed permutations $v\in {^J}W$ are listed in Example~\ref{ex:ParabolicQuotient}. For each $v\in {^J}W$, the subgraphs $\Gamma(I)_v$ and the corresponding chromatic quasisymmetric function $t^{\ell_{\Gamma(I)}(v)} X_{\Gamma(I)_v}(\vx;t)$ are displayed in Figure~\ref{fig: RestrictExample}. Computing in the basis of homogeneous symmetric functions, we have
\[
X_{\begin{tikzpicture}[
scale=0.3, vertex/.style={circle, fill, inner sep=1pt}
]
    \node[vertex] (a) at (0,0) {};
    \node[vertex] (b) at (1,0) {};
    \node[vertex] (c) at (2,0) {};

    \draw (a)--(b)--(c);
    \draw (a) [out=45, in=135] to (c);
\end{tikzpicture}}(\vx;t) = (1+2t+2t^2+t^3) h_3,\, \text{ and } \,  X_{\begin{tikzpicture}[
scale=0.3, vertex/.style={circle, fill, inner sep=1pt}
]
    \node[vertex] (a) at (0,0) {};
    \node[vertex] (b) at (1,0) {};
    \node[vertex] (c) at (2,0) {};

    \draw (a)--(b);
\end{tikzpicture}}(\vx;t) =  X_{\begin{tikzpicture}[
scale=0.3, vertex/.style={circle, fill, inner sep=1pt}
]
    \node[vertex] (a) at (0,0) {};
    \node[vertex] (b) at (1,0) {};
    \node[vertex] (c) at (2,0) {};

    \draw (b)--(c);
\end{tikzpicture}}(\vx;t) =  (1+t)h_{21}. \; 
\]
By Theorem \ref{thm.graph}, the desired graded Frobenius image is equal to  
\begin{align*}
    &\sum_{j\ge 0}\ch\left(H^{2j}(\Hess(s,H))\downarrow^{W(B_3)}_{S_3}\right)t^j\\
    =& \omega\left((1+t^2)X_{}(\vx;t)+(t+4t^2+t^3)X_{}(\vx;t)\right)\\
    =& (1+t^2)(1+2t+2t^2+t^3)h_3+(t+4t^2+t^3)(1+t)h_{21}.
\end{align*}

\begin{figure}[htbp]
\centering
\begin{subfigure}{0.4 \textwidth}
    \begin{tikzpicture}[
        scale=0.55,
        vertex/.style={circle, draw, minimum size=0.55 cm},
        every edge/.style={draw, thick}
        ]
        \coordinate[circle, minimum size=0.55 cm] (p1) at (0,0);
        \coordinate[circle, minimum size=0.55 cm] (p2) at (2,0);
        \coordinate[circle, minimum size=0.55 cm] (p3) at (4,0);
        \coordinate[circle, minimum size=0.55 cm] (p0) at (6,0);
        \coordinate[circle, minimum size=0.55 cm] (p4) at (8,0);
        \coordinate[circle, minimum size=0.55 cm] (p5) at (10,0);
        \coordinate[circle, minimum size=0.55 cm] (p6) at (12,0);

        \path[draw=blue!20, line width=10,line cap=round, line join=round] 
        (p1) to [out=45, in=135] (p3) (p1) to (p2) to (p3);
            
        %Vertices
        \node[vertex, fill=blue!20, label=below:{}] (v1) at (0,0) {\tiny$1$};
        \node[vertex, fill=blue!20, label=below:{}] (v2) at (2,0) {\tiny$2$};
        \node[vertex, fill=blue!20, label=below:{}] (v3) at (4,0) {\tiny$3$};
        \node[vertex, label=below:{}] (v0) at (6,0) {\tiny$0$};
        \node[vertex, label=below:{}] (v4) at (8,0) {\tiny $\bar{3}$};
        \node[vertex, label=below:{}] (v5) at (10,0) {\tiny $\bar{2}$};
        \node[vertex, label=below:{}] (v6) at (12,0) {\tiny$\bar{1}$};

        %\node (w) at (-1.3,-1.3) {$v=$};
        \node[circle, fill=red!20, minimum size=0.3 cm] (w1) at (0,-1.3) {$1$};
        \node[circle, fill=red!20 , minimum size=0.3 cm] (w2) at (2,-1.3) {$2$};
        \node[circle, fill=red!20 , minimum size=0.3 cm] (w3) at (4,-1.3) {$3$};
        \node (w4) at (8,-1.3) {$\bar{3}$};
        \node (w5) at (10,-1.3) {$\bar{2}$};
        \node (w6) at (12,-1.3) {$\bar{1}$};

        \draw (v1)--(v2) (v3)--(v0) (v0)--(v4) (v5)--(v6);
        \draw (v2)--(v3) (v4)--(v5);
        \draw (v1) to [out=45, in=135] (v3) (v2) to [out=45, in=135] (v0);
        \draw (v0) to [out=45, in=135] (v5) (v4) to [out=45, in=135] (v6);

\end{tikzpicture}\noindent
    \caption{$v=1~2~3$; $X_{\input{Tikzpicture/A3}}(\vx;t)$}
\end{subfigure}
\hfill
\begin{subfigure}{0.5 \textwidth}
    \begin{tikzpicture}[
            scale=0.55,
            vertex/.style={circle, draw, minimum size=0.55 cm},
            every edge/.style={draw, thick}
            ]
        \coordinate[circle, minimum size=0.55 cm] (p1) at (0,0);
        \coordinate[circle, minimum size=0.55 cm] (p2) at (2,0);
        \coordinate[circle, minimum size=0.55 cm] (p3) at (4,0);
        \coordinate[circle, minimum size=0.55 cm] (p0) at (6,0);
        \coordinate[circle, minimum size=0.55 cm] (p4) at (8,0);
        \coordinate[circle, minimum size=0.55 cm] (p5) at (10,0);
        \coordinate[circle, minimum size=0.55 cm] (p6) at (12,0);

        \path[draw=blue!20, line width=10,line cap=round, line join=round] 
        (p1) to (p2);
            
        %Vertices
        \node[vertex, fill=blue!20, label=below:{}] (v1) at (0,0) {\tiny$1$};
        \node[vertex, fill=blue!20, label=below:{}] (v2) at (2,0) {\tiny$2$};
        \node[vertex, label=below:{}] (v3) at (4,0) {\tiny$3$};
        \node[vertex, label=below:{}] (v0) at (6,0) {\tiny$0$};
        \node[vertex, fill=blue!20, label=below:{}] (v4) at (8,0) {\tiny $\bar{3}$};
        \node[vertex, label=below:{}] (v5) at (10,0) {\tiny $\bar{2}$};
        \node[vertex, label=below:{}] (v6) at (12,0) {\tiny$\bar{1}$};

        %\node (w) at (-1.3,-1.3) {$v=$};
        \node[circle, fill=red!20, minimum size=0.3 cm] (w1) at (0,-1.3) {$1$};
        \node[circle, fill=red!20 , minimum size=0.3 cm] (w2) at (2,-1.3) {$2$};
        \node (w3) at (4,-1.3) {$\bar{3}$};
        \node[circle, fill=red!20 , minimum size=0.3 cm] (w4) at (8,-1.3) {$3$};
        \node (w5) at (10,-1.3) {$\bar{2}$};
        \node (w6) at (12,-1.3) {$\bar{1}$};

        \draw (v1)--(v2) (v3)--(v0) (v5)--(v6);
        \draw[thick, decorate, decoration=zigzag, blue!100] (v0)--(v4);
        \draw (v2)--(v3) (v4)--(v5);
        \draw (v1) to [out=45, in=135] (v3) (v2) to [out=45, in=135] (v0);
        \draw (v0) to [out=45, in=135] (v5) (v4) to [out=45, in=135] (v6);
    \end{tikzpicture}
    \caption{$v=1~2~\bar{3}$; $t X_{\input{Tikzpicture/A21}}(\vx;t)$}
\end{subfigure}\\

\

\begin{subfigure}{0.4\textwidth}
    \begin{tikzpicture}[
            scale=0.55,
            vertex/.style={circle, draw, minimum size=0.55 cm},
            every edge/.style={draw, thick}
            ]
        \coordinate[circle, minimum size=0.55 cm] (p1) at (0,0);
        \coordinate[circle, minimum size=0.55 cm] (p2) at (2,0);
        \coordinate[circle, minimum size=0.55 cm] (p3) at (4,0);
        \coordinate[circle, minimum size=0.55 cm] (p0) at (6,0);
        \coordinate[circle, minimum size=0.55 cm] (p4) at (8,0);
        \coordinate[circle, minimum size=0.55 cm] (p5) at (10,0);
        \coordinate[circle, minimum size=0.55 cm] (p6) at (12,0);

        \path[draw=blue!20, line width=10,line cap=round, line join=round] 
        (p1) to [out=45, in=135 ] (p3);
            
        %Vertices
        \node[vertex, fill=blue!20, label=below:{}] (v1) at (0,0) {\tiny$1$};
        \node[vertex, label=below:{}] (v2) at (2,0) {\tiny$2$};
        \node[vertex, fill=blue!20, label=below:{}] (v3) at (4,0) {\tiny$3$};
        \node[vertex, label=below:{}] (v0) at (6,0) {\tiny$0$};
        \node[vertex, label=below:{}] (v4) at (8,0) {\tiny $\bar{3}$};
        \node[vertex, fill=blue!20, label=below:{}] (v5) at (10,0) {\tiny $\bar{2}$};
        \node[vertex, label=below:{}] (v6) at (12,0) {\tiny$\bar{1}$};

        %\node (w) at (-1.3,-1.3) {$v=$};
        \node[circle, fill=red!20, minimum size=0.3 cm] (w1) at (0,-1.3) {$1$};
        \node (w2) at (2,-1.3) {$\bar{3}$};
        \node[circle, fill=red!20 , minimum size=0.3 cm] (w3) at (4,-1.3) {$2$};
        \node (w4) at (8,-1.3) {$\bar{2}$};
        \node[circle, fill=red!20 , minimum size=0.3 cm] (w5) at (10,-1.3) {$3$};
        \node (w6) at (12,-1.3) {$\bar{1}$};

        \draw (v1)--(v2) (v3)--(v0) (v0)--(v4) (v5)--(v6);
        \draw[thick, decorate, decoration=zigzag, blue!100]  (v2)--(v3) (v4)--(v5);
        \draw (v1) to [out=45, in=135] (v3) (v2) to [out=45, in=135] (v0);
        \draw[thick, decorate, decoration=zigzag, blue!100] (v0) to [out=45, in=135] (v5);
        \draw (v4) to [out=45, in=135] (v6);
    \end{tikzpicture}\noindent
    \caption{$v=1~\bar{3}~2$; $t^2 X_{\input{Tikzpicture/A21}}(\vx;t)$}  
\end{subfigure}
    \hfill
\begin{subfigure}{0.5 \textwidth}
    \begin{tikzpicture}[
            scale=0.55,
            vertex/.style={circle, draw, minimum size=0.55 cm},
            every edge/.style={draw, thick}
            ]
        \coordinate[circle, minimum size=0.55 cm] (p1) at (0,0);
        \coordinate[circle, minimum size=0.55 cm] (p2) at (2,0);
        \coordinate[circle, minimum size=0.55 cm] (p3) at (4,0);
        \coordinate[circle, minimum size=0.55 cm] (p0) at (6,0);
        \coordinate[circle, minimum size=0.55 cm] (p4) at (8,0);
        \coordinate[circle, minimum size=0.55 cm] (p5) at (10,0);
        \coordinate[circle, minimum size=0.55 cm] (p6) at (12,0);

        \path[draw=blue!20, line width=10,line cap=round, line join=round] 
        (p2) to  (p3);
            
        %Vertices
        \node[vertex, label=below:{}] (v1) at (0,0) {\tiny$1$};
        \node[vertex, fill=blue!20, label=below:{}] (v2) at (2,0) {\tiny$2$};
        \node[vertex, fill=blue!20, label=below:{}] (v3) at (4,0) {\tiny$3$};
        \node[vertex, label=below:{}] (v0) at (6,0) {\tiny$0$};
        \node[vertex, label=below:{}] (v4) at (8,0) {\tiny $\bar{3}$};
        \node[vertex, label=below:{}] (v5) at (10,0) {\tiny $\bar{2}$};
        \node[vertex, fill=blue!20, label=below:{}] (v6) at (12,0) {\tiny$\bar{1}$};

        %\node (w) at (-1.3,-1.3) {$v=$};
        \node (w1) at (0,-1.3) {$\bar{3}$};
        \node[circle, fill=red!20, minimum size=0.3 cm] (w2) at (2,-1.3) {$1$};
        \node[circle, fill=red!20 , minimum size=0.3 cm] (w3) at (4,-1.3) {$2$};
        \node (w4) at (8,-1.3) {$\bar{2}$};
        \node (w5) at (10,-1.3) {$\bar{1}$};
        \node[circle, fill=red!20 , minimum size=0.3 cm] (w6) at (12,-1.3) {$3$};

        \draw[thick, decorate, decoration=zigzag, blue!100] (v1)--(v2) (v5)--(v6);
        \draw  (v2)--(v3)--(v0)--(v4)--(v5);
        \draw[thick, decorate, decoration=zigzag, blue!100] (v1) to [out=45, in=135] (v3) (v4) to [out=45, in=135] (v6);
        \draw (v2) to [out=45, in=135] (v0);
        \draw (v0) to [out=45, in=135] (v5);
    \end{tikzpicture}\noindent
    \caption{$v=\bar{3}~1~2$; $t^2 X_{\input{Tikzpicture/A21}}(\vx;t)$}
\end{subfigure}\\

\

\begin{subfigure}{0.4 \textwidth}
    \begin{tikzpicture}[
        scale=0.55,
        vertex/.style={circle, draw, minimum size=0.55 cm},
        every edge/.style={draw, thick}
        ]
        \coordinate[circle, minimum size=0.55 cm] (p1) at (0,0);
        \coordinate[circle, minimum size=0.55 cm] (p2) at (2,0);
        \coordinate[circle, minimum size=0.55 cm] (p3) at (4,0);
        \coordinate[circle, minimum size=0.55 cm] (p0) at (6,0);
        \coordinate[circle, minimum size=0.55 cm] (p4) at (8,0);
        \coordinate[circle, minimum size=0.55 cm] (p5) at (10,0);
        \coordinate[circle, minimum size=0.55 cm] (p6) at (12,0);

        \path[draw=blue!20, line width=10,line cap=round, line join=round] 
        (p4) to (p5);
            
        %Vertices
        \node[vertex, fill=blue!20, label=below:{}] (v1) at (0,0) {\tiny$1$};
        \node[vertex, label=below:{}] (v2) at (2,0) {\tiny$2$};
        \node[vertex, label=below:{}] (v3) at (4,0) {\tiny$3$};
        \node[vertex, label=below:{}] (v0) at (6,0) {\tiny$0$};
        \node[vertex, fill=blue!20, label=below:{}] (v4) at (8,0) {\tiny $\bar{3}$};
        \node[vertex, fill=blue!20, label=below:{}] (v5) at (10,0) {\tiny $\bar{2}$};
        \node[vertex, label=below:{}] (v6) at (12,0) {\tiny$\bar{1}$};

        %\node (w) at (-1.3,-1.3) {$v=$};
        \node[circle, fill=red!20, minimum size=0.3 cm] (w1) at (0,-1.3) {$1$};
        \node (w2) at (2,-1.3) {$\bar{3}$};
        \node (w3) at (4,-1.3) {$\bar{2}$};
        \node[circle, fill=red!20 , minimum size=0.3 cm] (w4) at (8,-1.3) {$2$};
        \node[circle, fill=red!20 , minimum size=0.3 cm] (w5) at (10,-1.3) {$3$};
        \node (w6) at (12,-1.3) {$\bar{1}$};

        \draw (v1)--(v2) (v3)--(v0) (v5)--(v6);
        \draw[thick, decorate, decoration=zigzag, blue!100] (v0)--(v4);
        \draw (v2)--(v3) (v4)--(v5);
        \draw (v1) to [out=45, in=135] (v3) (v2) to [out=45, in=135] (v0);
        \draw (v4) to [out=45, in=135] (v6);
        \draw[thick, decorate, decoration=zigzag, blue!100]  (v0) to [out=45, in=135] (v5);
\end{tikzpicture}\noindent
    \caption{$v=1~\bar{3}~\bar{2}$; $t^2 X_{
    \input{Tikzpicture/A12}}(\vx;t)$}
\end{subfigure}
\hfill
\begin{subfigure}{0.5 \textwidth}
    \begin{tikzpicture}[
        scale=0.55,
        vertex/.style={circle, draw, minimum size=0.55 cm},
        every edge/.style={draw, thick}
        ]
        \coordinate[circle, minimum size=0.55 cm] (p1) at (0,0);
        \coordinate[circle, minimum size=0.55 cm] (p2) at (2,0);
        \coordinate[circle, minimum size=0.55 cm] (p3) at (4,0);
        \coordinate[circle, minimum size=0.55 cm] (p0) at (6,0);
        \coordinate[circle, minimum size=0.55 cm] (p4) at (8,0);
        \coordinate[circle, minimum size=0.55 cm] (p5) at (10,0);
        \coordinate[circle, minimum size=0.55 cm] (p6) at (12,0);

        \path[draw=blue!20, line width=10,line cap=round, line join=round] 
        (p4) to [out=45, in=135] (p6);
            
        %Vertices
        \node[vertex, label=below:{}] (v1) at (0,0) {\tiny$1$};
        \node[vertex, fill=blue!20, label=below:{}] (v2) at (2,0) {\tiny$2$};
        \node[vertex, label=below:{}] (v3) at (4,0) {\tiny$3$};
        \node[vertex, label=below:{}] (v0) at (6,0) {\tiny$0$};
        \node[vertex, fill=blue!20, label=below:{}] (v4) at (8,0) {\tiny $\bar{3}$};
        \node[vertex, label=below:{}] (v5) at (10,0) {\tiny $\bar{2}$};
        \node[vertex, fill=blue!20, label=below:{}] (v6) at (12,0) {\tiny$\bar{1}$};

        %\node (w) at (-1.3,-1.3) {$v=$};
        \node (w1) at (0,-1.3) {$\bar{3}$};
        \node[circle, fill=red!20, minimum size=0.3 cm] (w2) at (2,-1.3) {$1$};
        \node (w3) at (4,-1.3) {$\bar{2}$};
        \node (w4)[circle, fill=red!20 , minimum size=0.3 cm] at (8,-1.3) {$2$};
        \node (w5) at (10,-1.3) {$\bar{1}$};
        \node[circle, fill=red!20 , minimum size=0.3 cm] (w6) at (12,-1.3) {$3$};

        \draw (v3)--(v0);
        \draw[thick, decorate, decoration=zigzag, blue!100]  (v1)--(v2) (v0)--(v4) (v5)--(v6);
        \draw (v2)--(v3) (v4)--(v5);
        \draw (v1) to [out=45, in=135] (v3) (v2) to [out=45, in=135] (v0);
        \draw (v4) to [out=45, in=135] (v6)   (v0) to [out=45, in=135] (v5);

    \end{tikzpicture}
    \caption{$v=\bar{3}~1~\bar{2}$; $t^2 X_{\input{Tikzpicture/A12}}(\vx;t)$}
\end{subfigure}\\

\

\begin{subfigure}{0.4 \textwidth}
    \begin{tikzpicture}[
        scale=0.55,
        vertex/.style={circle, draw, minimum size=0.55 cm},
        every edge/.style={draw, thick}
        ]
        \coordinate[circle, minimum size=0.55 cm] (p1) at (0,0);
        \coordinate[circle, minimum size=0.55 cm] (p2) at (2,0);
        \coordinate[circle, minimum size=0.55 cm] (p3) at (4,0);
        \coordinate[circle, minimum size=0.55 cm] (p0) at (6,0);
        \coordinate[circle, minimum size=0.55 cm] (p4) at (8,0);
        \coordinate[circle, minimum size=0.55 cm] (p5) at (10,0);
        \coordinate[circle, minimum size=0.55 cm] (p6) at (12,0);

        \path[draw=blue!20, line width=10,line cap=round, line join=round] 
        (p5) to (p6);
            
        %Vertices
        \node[vertex, label=below:{}] (v1) at (0,0) {\tiny$1$};
        \node[vertex, label=below:{}] (v2) at (2,0) {\tiny$2$};
        \node[vertex, fill=blue!20, label=below:{}] (v3) at (4,0) {\tiny$3$};
        \node[vertex, label=below:{}] (v0) at (6,0) {\tiny$0$};
        \node[vertex, label=below:{}] (v4) at (8,0) {\tiny $\bar{3}$};
        \node[vertex, fill=blue!20, label=below:{}] (v5) at (10,0) {\tiny $\bar{2}$};
        \node[vertex, fill=blue!20, label=below:{}] (v6) at (12,0) {\tiny$\bar{1}$};

        %\node (w) at (-1.3,-1.3) {$v=$};
        \node (w1) at (0,-1.3) {$\bar{3}$};
        \node (w2) at (2,-1.3) {$\bar{2}$};
        \node[circle, fill=red!20, minimum size=0.3 cm] (w3) at (4,-1.3) {$1$};
        \node (w4) at (8,-1.3) {$\bar{1}$};
        \node (w5)[circle, fill=red!20 , minimum size=0.3 cm] at (10,-1.3) {$2$};
        \node[circle, fill=red!20 , minimum size=0.3 cm] (w6) at (12,-1.3) {$3$};

        \draw (v1)--(v2) (v3)--(v0)--(v4) (v5)--(v6);
        \draw[thick, decorate, decoration=zigzag, blue!100]  (v2)--(v3) (v4)--(v5);
        \draw[thick, decorate, decoration=zigzag, blue!100] (v1) to [out=45, in=135] (v3) (v4) to [out=45, in=135] (v6)  (v0) to [out=45, in=135] (v5);
        \draw (v2) to [out=45, in=135] (v0);

    \end{tikzpicture}\noindent
    \caption{$v=\bar{3}~\bar{2}~1$; $t^3 X_{\input{Tikzpicture/A12}}(\vx;t)$}
\end{subfigure}
\hfill
\begin{subfigure}{0.5 \textwidth}
    \begin{tikzpicture}[
        scale=0.55,
        vertex/.style={circle, draw, minimum size=0.55 cm},
        every edge/.style={draw, thick}
        ]
        \coordinate[circle, minimum size=0.55 cm] (p1) at (0,0);
        \coordinate[circle, minimum size=0.55 cm] (p2) at (2,0);
        \coordinate[circle, minimum size=0.55 cm] (p3) at (4,0);
        \coordinate[circle, minimum size=0.55 cm] (p0) at (6,0);
        \coordinate[circle, minimum size=0.55 cm] (p4) at (8,0);
        \coordinate[circle, minimum size=0.55 cm] (p5) at (10,0);
        \coordinate[circle, minimum size=0.55 cm] (p6) at (12,0);

        \path[draw=blue!20, line width=10,line cap=round, line join=round] 
        (p4) to (p5) to (p6) (p4) to [out=45, in=135] (p6);
            
        %Vertices
        \node[vertex, label=below:{}] (v1) at (0,0) {\tiny$1$};
        \node[vertex, label=below:{}] (v2) at (2,0) {\tiny$2$};
        \node[vertex, label=below:{}] (v3) at (4,0) {\tiny$3$};
        \node[vertex, label=below:{}] (v0) at (6,0) {\tiny$0$};
        \node[vertex, fill=blue!20, label=below:{}] (v4) at (8,0) {\tiny $\bar{3}$};
        \node[vertex, fill=blue!20, label=below:{}] (v5) at (10,0) {\tiny $\bar{2}$};
        \node[vertex, fill=blue!20, label=below:{}] (v6) at (12,0) {\tiny$\bar{1}$};

        %\node (w) at (-1.3,-1.3) {$v=$};
        \node (w1) at (0,-1.3) {$\bar{3}$};
        \node (w2) at (2,-1.3) {$\bar{2}$};
        \node (w3) at (4,-1.3) {$\bar{1}$};
        \node[circle, fill=red!20 , minimum size=0.3 cm] (w4) at (8,-1.3) {$1$};
        \node[circle, fill=red!20 , minimum size=0.3 cm] (w5) at (10,-1.3) {$2$};
        \node[circle, fill=red!20, minimum size=0.3 cm] (w6) at (12,-1.3) {$3$};

        \draw (v1)--(v2) (v3)--(v0) (v5)--(v6);
        \draw[thick, decorate, decoration=zigzag, blue!100] (v0)--(v4);
        \draw (v2)--(v3) (v4)--(v5);
        \draw (v1) to [out=45, in=135] (v3) (v2) to [out=45, in=135] (v0);
        \draw (v4) to [out=45, in=135] (v6);
        \draw[thick, decorate, decoration=zigzag, blue!100]  (v0) to [out=45, in=135] (v5);
\end{tikzpicture}
    \caption{$v=\bar{3}~\bar{2}~\bar{1}$; $t^2 X_{\input{Tikzpicture/A3}}(\vx;t)$}
\end{subfigure}
\caption{Subgraphs $\Gamma(I)_v$ and summands of the graded character computed in Example~\ref{ex.B3.full}.}
\label{fig: RestrictExample}
\end{figure}
\end{example}

\begin{rem}\label{rem.P-tab} For any ideal $I$ in $\Phi^+(A_{n-1})$, there is a combinatorial formula for the graded Schur expansion of $X_{\Gamma(I)}(\mathbf{x};t)$ in terms of ``$\cp(I)$-tableaux''; see~\cite[\S 6]{Shareshian-Wachs2016}. Thus, the results of Theorem~\ref{thm.graph} yield an interpretation of the character of the restriction in terms of the various $\cp(v(I)\cap \Phi_J^+)$-tableaux with entries from the set $\{v^{-1}(j) : j\in [n]\}$ for $v\in {^J}W$. 
\end{rem}

%%%%%%%%%%%%%%%%%%%%%%%

\section{A conjecture of Lesnevich}\label{sec.Lesnevich}  In this section we use Theorem~\ref{thm.graph} to prove a conjecture of Lesnevich~\cite{Lesnevich2025} about the dot action representation in types $B_n$ and $C_n$.  To begin, we review some well-known and relevant facts.

If $\Phi=B_n$, we denote the set of simple roots by $\Delta=\{\beta_1,\ldots,\beta_n\}$, with $\beta_i = e_i-e_{i+1}$ when $i<n$ and $\beta_n = e_n$.

The set $\Phi^+(B_n)$ of positive roots consists of
\begin{align*}
e_i-e_j&=\sum_{k=i}^{j-1}\beta_k  & 1 \leq i<j \leq n, \\ 
e_i&=\sum_{k=i}^n \beta_k  & 1 \leq i \leq n,\\
e_i+e_j&=\sum_{k=i}^n\beta_k+\sum_{k=0}^{n-j}\beta_{n-k} & 1\leq i<j\leq n.
\end{align*}

If $\Phi=C_n$, we denote the set of simple roots by $\Delta=\{\gamma_1,\ldots,\gamma_n\}$, with $\gamma_i = e_i-e_{i+1}$ when $i<n$ and $\gamma_n = 2e_n$.
The set $\Phi^+(C_n)$ of positive roots consists of
\begin{align*}
e_i-e_j&=\sum_{k=i}^{j-1}\gamma_k & 1 \leq i<j \leq n, \\ 
e_i+e_n&=\sum_{k=i}^n \gamma_k & 1 \leq i \leq n-1, \\ 
e_i+e_{j-1}&=\sum_{k=i}^n\gamma_k+\sum_{k=0}^{n-j}\gamma_{n-k-1} & 1\leq i<j \leq n.
\end{align*}

Recall that, in either case, $\Phi^+$ is partially ordered, according to the rule that $\theta$ covers $\rho$ in $\Phi^+$ if $\theta-\rho\in \Delta$. For example, the Hasse diagrams of $\Phi^+(C_3)$ and $\Phi^+(B_3)$ appear in Figures~\ref{fig:C3} and~\ref{fig:B3}, respectively. Although $B_n$ and $C_n$ are not isomorphic root systems, the posets $\Phi^+(B_n)$ and $\Phi^+(C_n)$ are isomorphic for all $n$. Let $\pi:\Phi^+(B_n) \rightarrow \Phi^+(C_n)$ be the natural poset isomorphism, which is defined by 
\begin{align*}
   \pi(e_i-e_j)&=e_i-e_j  & 1\le i<j\le n,\\
   \pi(e_i)&=e_i+e_n & 1\le i\le n,\\
   \pi(e_i+e_j)&= e_i+e_{j-1} & 1\le i<j\le n.
\end{align*}

Let $s\in \mathfrak{so}_{2n+1}(\C)$ and $s'\in \mathfrak{sp}_{2n}(\C)$ be regular semisimple elements.
Since the Weyl groups $W(B_n)$ and $W(C_n)$ are equal, one can compare the dot action representations on the cohomology of the corresponding regular semisimple Hessenberg varieties $\cb(s,H(I))$ and $\cb(s^\prime,H(\pi(I)))$ for any ideal $I \subseteq \Phi^+(B_n)$.

For each root $\alpha \in \Phi^+$, write $s_\alpha$ for the associated reflection.  Given an order ideal $I \subseteq \Phi^+$, define $R(I)$ to be the set of reflections associated with $I$,
$$
R(I):=\{s_\alpha:\alpha \in I\}.
$$
In \cite[Lemma 3.1]{Lesnevich2025}, Lesnevich shows that, given an ideal $I \subseteq \Phi^+(B_n)$, if $R(I)=R(\pi(I))$ then the equivariant cohomology rings of $\Hess(s, H(I))$ and $\Hess(s', H(\pi(I)))$ are isomorphic. Consequently, the corresponding dot action representations on are also isomorphic in this case.  Lesnevich also suggests that the converse of his result should hold~\cite[Remark 3.2]{Lesnevich2025}, and we prove here that this suggestion is correct.

\begin{thm} \label{thm.nate} Let $I \subseteq \Phi^+(B_n)$ be an order ideal and $\pi: \Phi^+(B_n) \to \Phi^+(C_n)$ the poset isomorphism defined above.  Assume that $s\in \mathfrak{so}_{2n+1}(\C)$ and $s^\prime \in \mathfrak{sp}_{2n}(\C)$ are regular semisimple elements. The dot action representations $H^\ast(\Hess(s, H(I)))$ and $H^\ast(\Hess(s^\prime,H(\pi(I))))$ are isomorphic if and only if $R(I) = R(\pi(I))$. 
\end{thm}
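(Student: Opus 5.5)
The ``if'' direction is \cite[Lemma 3.1]{Lesnevich2025}: when $R(I)=R(\pi(I))$ the equivariant cohomology rings, and hence the dot action representations, coincide. The work is therefore in the converse. I will prove it in the contrapositive: assuming $R(I)\neq R(\pi(I))$, I will show the two representations already differ after restricting to $W_J\cong S_n$. Theorem~\ref{thm.graph} lets me compare the graded Frobenius characteristics
$$\sum_{v\in {^J}W} t^{\ell_{\Gamma(I)}(v)}\,\omega X_{\Gamma(I)_v}(\vx;t)\quad\text{and}\quad \sum_{v\in {^J}W} t^{\ell_{\Gamma(\pi(I))}(v)}\,\omega X_{\Gamma(\pi(I))_v}(\vx;t),$$
where the first graph carries the extra vertex $0$ and the second does not.

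\textbf{Step 1: make the hypothesis combinatorial.} I first turn $R(I)=R(\pi(I))$ into a condition on $I$. Reflections of type $B$ and $C$ agree as signed permutations via $s_{e_i}=s_{2e_i}$ and $s_{e_i\pm e_j}$. Now $\pi$ fixes every $e_i-e_j$, sends $e_n\mapsto 2e_n$ (same reflection), but sends $e_i\mapsto e_i+e_n$ for $i<n$ and $e_i+e_j\mapsto e_i+e_{j-1}$. Hence equality of reflection sets is a condition on which short roots $e_i$ and which roots $e_i+e_j$ lie in $I$. I expect it to reduce to a short explicit list, for instance ``$I$ contains no $e_i$ with $i<n$'' or a closely related staircase-type condition. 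I will derive this list by checking, root by root, which reflections of $R(\pi(I))$ are forced to appear in $R(I)$.

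\textbf{Step 2: translate to graphs.} I then compare the graphs $\Gamma(I)$ and $\Gamma(\pi(I))$. The edges $\{i,j\}$ with $i,j\in[n]$ (from the roots $e_i-e_j$) agree. In the $B$ graph, the edges through the vertex $0$ and the edges $\{i,\bar j\}$ are, after contracting $0$, a ``shift by one'' of the $C$ edges $\{i,\bar j\}$ and $\{i,\bar i\}$. When $R(I)=R(\pi(I))$, this shift should make the multisets $\{(\ell(v),\Gamma_v)\}$ coincide. When the reflection sets differ, I want to locate the first discrepancy.

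\textbf{Step 3: extract a distinguishing invariant.} I will use a coarse but computable invariant: pairing both sides with $h_n$, i.e.\ the graded dimension of $W_J$-invariants. By Theorem~\ref{thm: PoincarePoly} with $K=J$ together with Proposition~\ref{prop:FixPoints}, this is
$$\sum_{v\in{^J}W}\ t^{\ell_H(v)}\,\bigl(\text{graded number of acyclic-type orientations of }\Gamma_{H,v}\text{ counted by }\langle \omega X,h_n\rangle\bigr).$$
Equivalently it is the Poincaré polynomial of $\Hess(x_J,H)$, namely $\sum t^{2\ell_H(w)}$ over $w$ with $w^{-1}(J)\subseteq\Phi_H$. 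If this turns out too coarse, I will instead use the coefficient of $e_n$ in $\sum t^{\ell}X_{\Gamma_v}$. That coefficient is supported on the $v$ for which $\Gamma(I)_v$ is complete (via $X_{K_n}=[n]_t!\,e_n$ plus contributions computed by the $e$-positivity formula). I will then choose a specific $v\in{^J}W$ adapted to the smallest root where $I$ and $\pi(I)$ disagree on reflections. Using Corollary~\ref{cor.hlength}, I will show that the lowest power of $t$ (or the number of $v$) with a complete induced subgraph differs.

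\textbf{Main obstacle.} The main obstacle is Step 3. Many terms agree under the shift, so I must pick a single coefficient that provably separates the two sums for every ideal with $R(I)\neq R(\pi(I))$, not just in examples. My first attempt is an extremal argument. Let $\alpha$ be a minimal root of $I$ whose reflection is not matched. Take $v$ so that $\{v^{-1}(j)\}$ contains both endpoints of the corresponding edge in one graph but not in the other. Then show that the contribution in the lowest affected degree differs, while all contributions from other $v$ in that degree cancel.
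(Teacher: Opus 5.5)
Your proposal picks a usable invariant: pairing with $h_n$, i.e.\ the multiplicity of the trivial representation of $W_J$. But it leaves a genuine gap exactly where you flag the ``main obstacle''. You never establish a comparison that separates the two sums. The plan you sketch, a graded lowest-degree comparison in which the contributions of all other $v$ cancel, is unjustified and points the wrong way. The exponents $\ell_{\Gamma(I)}(v)$ and $\ell_{\Gamma(\pi(I))}(v)$ can differ, partly because one side counts edges through $0$ and the other edges $\{i,\bar i\}$. Many $v$ also change at once, so there is no reason to expect cancellation.

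The missing idea is a \emph{termwise} inclusion. For $v\in{^J}W$, the vertex set $\{v^{-1}(j):j\in[n]\}$ never contains $0$ and never contains both $k$ and $\bar k$. So $\Gamma(I)_v$ and $\Gamma(\pi(I))_v$ only involve edges coming from $L^-=L\cap\{e_i-e_j\}$ and $L^+=L\cap\{e_i+e_j\}$, for $L=I,\pi(I)$. One has $\pi(I)^-=I^-$. One also has $I^+\subseteq\pi(I)^+$, since $e_i+e_j\le e_i+e_{j-1}=\pi(e_i+e_j)$ in $\Phi^+(C_n)$, and $I^+=\pi(I)^+$ if and only if $R(I)=R(\pi(I))$. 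Hence $\Gamma(I)_v$ is a spanning subgraph of $\Gamma(\pi(I))_v$ for every $v$. The inclusion is proper for the $v$ whose vertex set contains $i$ and $\bar j$, where $e_i+e_j\in\pi(I)^+\setminus I^+$. The quantity $\langle\omega X_\Gamma(\vx;1),s_n\rangle$ equals $\prod_j(1+a_j)$, the number of acyclic orientations, and it strictly increases when an edge is added. So Theorem~\ref{thm.graph} at $t=1$ gives a strict inequality of trivial multiplicities, with no cancellation to control. Setting $t=1$ also proves that even the \emph{ungraded} representations differ, which a lowest-degree comparison in $t$ could not give.

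Two of your intermediate expectations are also wrong.
\begin{enumerate}
\item In Step 1, ``$I$ contains no $e_i$ with $i<n$'' cannot be the characterization: $I=\Phi^+(B_n)$ satisfies $R(I)=R(\pi(I))$ and contains $e_1$. All you need is $R(I)=R(\pi(I))\iff I^+=\pi(I)^+$.
\item In Step 3, the coefficient of $e_n$ in $X_\Gamma$ is not supported on complete graphs. It counts acyclic orientations with a unique sink; for example, $X_{P_3}=3e_3+e_{21}$.
\end{enumerate}

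Your own reformulation would in fact have sufficed. At $t=1$ the trivial multiplicity is $|\{w\in W: w^{-1}(J)\subseteq\Phi_H\}|$. Each $w^{-1}(e_k-e_{k+1})$ has the form $\pm e_a\pm e_b$ with $a\neq b$, so the admissible $w$ for $I$ form a subset of those for $\pi(I)$. Once you have $I^+\subsetneq\pi(I)^+$, it is easy to exhibit a $w$ that is admissible only on the $C$ side.
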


The backward direction of Theorem~\ref{thm.nate} is Lesnevich's~\cite[Lemma 3.1]{Lesnevich2025}. The forward direction is an immediate consequence of the following.

\begin{prop} \label{prop.nate}
Let $I \subseteq \Phi^+(B_n)$ be an order ideal.  Assume that $s\in \mathfrak{so}_{2n+1}(\C)$ and $s^\prime \in \mathfrak{sp}_{2n}(\C)$ are regular semisimple elements.  If $R(I) \neq R(\pi(I))$ then the multiplicity of the trivial representation of $W_J \simeq S_n$ in the restriction of the (ungraded) type $B_n$ dot action representation
\[
H^\ast(\cb(s,H(I)))\big\downarrow^{W(B_n)}_{W_J}
\]
is strictly less than the multiplicity of the trivial representation of $W_J \simeq S_n$ in the restriction of the (ungraded) type $C_n$ dot action representation 
\[
H^\ast(\cb(s^\prime,H(\pi(I))))\big\downarrow^{W(C_n)}_{W_J}.
\]
\end{prop}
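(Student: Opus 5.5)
The plan is to avoid chromatic quasisymmetric functions and compute the trivial multiplicity directly from regular Hessenberg varieties. By Proposition~\ref{prop:FixPoints} with $K=J$, the multiplicity of the trivial representation of $W_J$ in the ungraded restriction equals $\dim H^\ast(\Hess(s,H))^{W_J}=\dim H^\ast(\Hess(x_J,H))$. By Lemma~\ref{lem.Betti}, this equals $\Poin(\Hess(x_J,H);1)$, that is,
\[
N_B(I):=\#\{w\in W : w^{-1}(J)\subseteq \Phi^+(B_n)\cup(-I)\}
\]
in type $B$. The type $C$ count $N_C(\pi(I))$ is defined the same way with $\Phi^+(C_n)\cup(-\pi(I))$. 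Since $W(B_n)=W(C_n)$ acts by the same signed permutations, both counts range over the same set of $w$. So the proposition reduces to a comparison $N_B(I)<N_C(\pi(I))$ whenever $R(I)\neq R(\pi(I))$.

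\textbf{Step 1: only the roots $\pm e_a\pm e_b$ matter.} For $\alpha_i=e_i-e_{i+1}\in J$ we have $w^{-1}(\alpha_i)=e_{w^{-1}(i)}-e_{w^{-1}(i+1)}$. This is always of the form $\pm e_a\pm e_b$ with $a\neq b$, and it is never $\pm e_a$ or $\pm 2e_a$. Hence only $I':=I\cap\{e_a\pm e_b\}$ and $\pi(I)':=\pi(I)\cap\{e_a\pm e_b\}$ enter the conditions. Moreover, these roots are literally the same vectors in $B_n$ and $C_n$, with the same signs.

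\textbf{Step 2: the inclusion $I'\subseteq \pi(I)'$.} Roots $e_i-e_j$ are fixed by $\pi$. If $e_i+e_j\in I$ with $i<j$, then $\pi(I)\ni e_i+e_{j-1}$. Since $e_i+e_{j-1}\geq e_i+e_j$ in $\Phi^+(C_n)$ (their difference is $\gamma_{j-1}$) and $\pi(I)$ is a lower ideal, we get $e_i+e_j\in\pi(I)$. Therefore every $w$ counted by $N_B(I)$ is counted by $N_C(\pi(I))$, which gives $N_B(I)\le N_C(\pi(I))$.

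\textbf{Step 3: equality of reflection sets versus equality of the mixed-root sets.} Since $s_{e_a}=s_{2e_a}$ and reflections of non-proportional roots are distinct, I will check that $R(I)=R(\pi(I))$ is equivalent to $I'=\pi(I)'$ together with a matching of the sets $\{a: e_a\in I\}$ and $\{a:2e_a\in\pi(I)\}$. I then want to show, using the explicit ideals, that the first condition forces the second. If that fails, the short and long roots will need a separate treatment. Granting it, $R(I)\neq R(\pi(I))$ produces a root $\beta\in\pi(I)'\setminus I'$, and I will choose $\beta$ minimal in $\Phi^+(C_n)$ with this property.

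\textbf{Step 4 (main obstacle): strictness.} I must exhibit one $w$ counted in type $C$ but not in type $B$. The first attempt is as follows. Write $\beta=e_a-e_b$ or $\beta=e_a+e_b$. Construct a signed permutation $w$ whose one-line notation has $i$ before $i+1$ for every $i\ne k$, and has a single descent position $k$ with $w^{-1}(\alpha_k)=-\beta$; in the spirit of Lemma~\ref{lem.shortest}, place $w^{-1}(k)\succ w^{-1}(k+1)$ adjacent across $\beta$. All other $w^{-1}(\alpha_i)$ are then positive roots, so $w$ satisfies the type $C$ condition since $\beta\in\pi(I)$. It violates the type $B$ condition since $\beta\notin I'$.

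The delicate part is realizing an arbitrary mixed root $\beta$ as $-w^{-1}(\alpha_k)$ while keeping every other $w^{-1}(\alpha_i)$ positive; the case $\beta=e_a+e_b$ needs care with the order~\eqref{eq.order}. If this is not possible in general, I will relax the requirement and allow other $w^{-1}(\alpha_i)$ to be negatives of roots lying below $\beta$. By the minimality of $\beta$ such roots lie in $I'$, hence in both ideals, so they do not spoil the argument. Steps 2 and 4 together give the strict inequality, which is the claim of Proposition~\ref{prop.nate}.
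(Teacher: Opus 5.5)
Your route is genuinely different from the paper's, and it works once two announced steps are carried out.

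\textbf{Comparison with the paper.} The paper evaluates Theorem~\ref{thm.graph} at $t=1$ and compares coset by coset. Lemma~\ref{lem.subgraph} gives $\Gamma(I)_v\subseteq\Gamma(\pi(I))_v$ for every $v\in{^J}W$. Lemma~\ref{lem.triv} (Shareshian--Wachs) turns each summand into $\prod_j(1+a_j)$, which is monotone in edges. You instead apply Proposition~\ref{prop:FixPoints} with $K=J$ and Lemma~\ref{lem.Betti} once. The trivial multiplicity is then simply $\#\{w\in W: w^{-1}(J)\subseteq\Phi^+\cup(-I)\}$. This bypasses Theorem~\ref{mainthm}, Brosnan--Chow and chromatic quasisymmetric functions entirely. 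The paper's sum $\sum_v\prod_j(1+a_j)$ is just your count sorted by cosets, via Lemma~\ref{lem.Betti.decomp} and Theorem~\ref{thm: PoincarePoly}. Your version also makes transparent why only the roots $\pm e_a\pm e_b$ can matter. What the paper's route buys is that the comparison sits inside the graded, coset-wise formula of Theorem~\ref{thm.graph}. Your Steps~1 and~2 are correct; Step~2 is Lemma~\ref{lem.subgraph}(1) in disguise.

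\textbf{Step~3 needs a proof, not a fallback.} In your framework the count depends only on $I'$, so a ``separate treatment'' of short and long roots cannot rescue anything. You must prove that $I'=\pi(I)'$ forces $R(I)=R(\pi(I))$. This follows by counting, as in the proof of claim (c) inside Lemma~\ref{lem.subgraph}:
\begin{itemize}
\item $\pi$ is a bijection fixing every $e_i-e_j$, so $I'=\pi(I)'$ gives $|\{a:e_a\in I\}|=|\{a:2e_a\in\pi(I)\}|$;
\item both sets are terminal segments $\{k,\dots,n\}$ of $[n]$, because $e_a>e_{a+1}$ in $\Phi^+(B_n)$ and $2e_a>2e_{a+1}$ in $\Phi^+(C_n)$;
\item hence the two sets coincide.
\end{itemize}

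\textbf{Step~4 is explicit; no minimality is needed.} Since $I$ and $\pi(I)$ contain the same roots $e_i-e_j$, you have $\beta=e_a+e_b$ with $a<b$. Take $w$ with $w^{-1}=1\,2\cdots(a-1)\,\bar b\,a\,(a+1)\cdots\widehat{b}\cdots n$ in window notation. Then:
\begin{itemize}
\item $w^{-1}(\alpha_{a-1})=e_{a-1}+e_b$ (when $a>1$), which is positive;
\item $w^{-1}(\alpha_a)=e_{\bar b}-e_a=-\beta$;
\item every other $w^{-1}(\alpha_i)$ is $e_c-e_d$ with $c<d$, which is positive.
\end{itemize}
So $w$ is counted in type $C$, because $\beta\in\pi(I)$. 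It is not counted in type $B$, because $\beta\notin I$. Your relaxation via a minimal $\beta$ is harmless but unnecessary. With these two fillers your argument is a complete proof of Proposition~\ref{prop.nate}.
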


We prove Proposition~\ref{prop.nate} using two lemmas.  Let us fix an ideal $I \subseteq \Phi^+(B_n)$.  We have graphs $\Gamma(I)$ and $\Gamma(\pi(I))$ on vertex set $[n] \cup \{0\} \cup [\bar{n}]$.  We define graphs $\Gamma_0(I)$ and $\Gamma_0(\pi(I))$ on vertex set $[n] \cup [\bar{n}]$ as
\begin{itemize}
    \item $\Gamma_0(I)$ is the subgraph of $\Gamma(I)$ induced on $[n] \cup [\bar{n}]$; and
    \item $\Gamma_0(\pi(I))$ is obtained from the subgraph of $\Gamma(\pi(I))$ induced on $[n] \cup [\bar{n}]$ by removing all edges of the form $\{i,\bar{i}\}$.
\end{itemize}
Let $v\in {^J}W$. We observe that, since $v$ is a signed permutation in $W=W(B_n)=W(C_n)$, $\Gamma(I)_v$ is an induced subgraph of $\Gamma_0(I)$ and $\Gamma(\pi(I))_v$ is an induced subgraph of $\Gamma_0(\pi(I))$.

\begin{example} Consider the order ideal $I\subseteq \Phi^+(B_3)$ generated by $e_1-e_3$ and $2e_2$ and the corresponding graph $\Gamma(I)$, as pictured in Figure~\ref{fig:B3}. The ideal $\pi(I)$ and graph $\Gamma(\pi(I))$ are pictured in Figure~\ref{fig:C3}. The graphs $\Gamma_0(I)$ and $\Gamma_0(\pi(I))$ are both equal to the following. 
\begin{center}
    \begin{tikzpicture}[
                scale=0.6,
                vertex/.style={circle, draw, minimum size=0.75 cm},
                every edge/.style={draw, thick}
            ]
            %Vertices
            \node[vertex, label=below:{}] (v1) at (0,0) {\tiny $1$};
            \node[vertex, label=below:{}] (v2) at (2,0) {\tiny$2$};
            \node[vertex, label=below:{}] (v3) at (4,0) {\tiny$3$};
            \node[vertex, label=below:{}] (v4) at (6,0) {\tiny$\bar 3$};
            \node[vertex, label=below:{}] (v5) at (8,0) {\tiny$\bar 2$};
            \node[vertex, label=below:{}] (v6) at (10,0) {\tiny$\bar 1$};
            
            \draw (v1)--(v2)--(v3);
            \draw (v4)--(v5)--(v6);
            \draw (v1) to [out=45, in=135] (v3) (v2) to [out=45, in=135] (v4);
            \draw (v3) to [out=45, in=135] (v5) (v4) to [out=45, in=135] (v6);
        \end{tikzpicture}  
\end{center}
Notice that $R(I)=R(\pi(I))$ in this case.
\end{example}

\begin{lemma} \label{lem.subgraph}
For every ideal $I \subseteq \Phi^+(B_n)$, both
\begin{enumerate}
    \item $\Gamma_0(I)$ is a subgraph of $\Gamma_0(\pi(I))$, and
    \item $\Gamma_0(I)=\Gamma_0(\pi(I))$ if and only if $R(I)=R(\pi(I))$
\end{enumerate}
hold.
\end{lemma}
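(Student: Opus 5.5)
The plan is to compare the edge sets of the two graphs directly, using the explicit formula for $\pi$ and the fact that $I$ is a lower order ideal. Both $\Gamma_0(I)$ and $\Gamma_0(\pi(I))$ are $\iota$-invariant and have no edges $\{i,\bar i\}$: type $B_n$ has no roots $2e_i$, and those edges are deleted by definition in the type $C$ graph. So it suffices to compare edges $\{i,j\}$ and $\{i,\bar j\}$ with $1\le i<j\le n$.

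\textbf{Edges of the form $\{i,j\}$.} Since $\pi(e_i-e_j)=e_i-e_j$, the edge $\{i,j\}$ lies in $\Gamma_0(\pi(I))$ if and only if $e_i-e_j\in I$, which is exactly when it lies in $\Gamma_0(I)$. These edges therefore agree.

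\textbf{Edges of the form $\{i,\bar j\}$, $i<j$.} Inverting $\pi$, we have $e_i+e_j\in\pi(I)$ if and only if either $j<n$ and $e_i+e_{j+1}\in I$, or $j=n$ and $e_i\in I$.

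\emph{Part (1).} Suppose $e_i+e_j\in I$. In $\Phi^+(B_n)$ we have $(e_i+e_j)-(e_i+e_{j+1})=\beta_j$ and $(e_i+e_n)-e_i=\beta_n$. Hence $e_i+e_{j+1}\le e_i+e_j$ when $j<n$, and $e_i\le e_i+e_n$. Because $I$ is a lower order ideal, $e_i+e_j\in\pi(I)$. So every edge of $\Gamma_0(I)$ is an edge of $\Gamma_0(\pi(I))$, which proves (1).

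\emph{Part (2), forward direction.} Compute the reflection sets:
\[
R(I)=\{s_{e_i-e_j}:e_i-e_j\in I\}\cup\{s_{e_i+e_j}:e_i+e_j\in I\}\cup\{s_{e_i}:e_i\in I\},
\]
and $R(\pi(I))$ is the analogous union with the last part replaced by $\{s_{2e_i}:2e_i\in\pi(I)\}$. We have $s_{e_i}=s_{2e_i}$. Reflections attached to non-proportional roots are distinct, so the three pieces cannot overlap. Suppose $R(I)=R(\pi(I))$. Then the sets of roots $e_i+e_j$ ($i<j$) in $I$ and in $\pi(I)$ coincide. By the edge description above, $\Gamma_0(I)=\Gamma_0(\pi(I))$.

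\emph{Part (2), converse.} This is the step I expect to need the most care, because the graphs do not see the roots $e_i$ and $2e_i$ at all. Assume the graphs are equal, so $e_i+e_j\in I\iff e_i+e_j\in\pi(I)$ for all $i<j$. Combined with the inverse formula for $\pi$, this gives
\[
e_i+e_j\in I\iff e_i+e_{j+1}\in I\quad (j<n),\qquad\text{and}\qquad e_i+e_n\in I\iff e_i\in I.
\]
Chaining these for $j=i+1,\dots,n$ yields $e_i\in I\iff e_i+e_{i+1}\in I$ for $i<n$. Since $\pi(e_i+e_{i+1})=2e_i$, the right-hand side is equivalent to $2e_i\in\pi(I)$. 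For $i=n$, $2e_n=\pi(e_n)\in\pi(I)\iff e_n\in I$ holds trivially.

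Hence $\{s_{e_i}:e_i\in I\}=\{s_{2e_i}:2e_i\in\pi(I)\}$. The remaining pieces of $R(I)$ and $R(\pi(I))$ agree because the graphs are equal. Therefore $R(I)=R(\pi(I))$.
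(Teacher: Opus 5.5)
Your proof is correct. Its structure matches the paper's in three places:
\begin{itemize}
\item The edges $\{i,j\}$ agree because $\pi$ fixes each $e_i-e_j$.
\item Part (1) follows from the lower-ideal property. You use that $I$ is an ideal in $\Phi^+(B_n)$ together with the inverse formula for $\pi$; the paper uses that $\pi(I)$ is an ideal in $\Phi^+(C_n)$. These are equivalent since $\pi$ is a poset isomorphism.
\item The forward direction of (2) reads off $I^+=\pi(I)^+$ from the reflections $s_{e_i+e_j}$.
\end{itemize}

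The genuine difference is in the converse of (2).

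The paper argues by counting. The equalities $|I|=|\pi(I)|$, $I^-=\pi(I)^-$ and $I^+=\pi(I)^+$ force $|I^0|=|\pi(I)^0|$. Since $I^0$ and $\pi(I)^0$ are down-sets of the chains $e_1>\cdots>e_n$ and $2e_1>\cdots>2e_n$, they must correspond.

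You instead telescope the equivalences $e_i+e_j\in I\iff e_i+e_{j+1}\in I$ and $e_i+e_n\in I\iff e_i\in I$. This gives $e_i\in I\iff e_i+e_{i+1}\in I\iff 2e_i\in\pi(I)$ directly.

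Your route has three advantages. It produces the pointwise matching $e_i\leftrightarrow 2e_i$ explicitly. It avoids the order-theoretic facts $e_i>e_{i+1}$ in $\Phi^+(B_n)$ and $2e_i>2e_{i+1}$ in $\Phi^+(C_n)$, which the paper leaves to the reader. It also uses no ideal property at all in that direction. The paper's counting argument is shorter but depends on those chain facts.
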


\begin{proof}
For any lower order ideal $L$ in either $\Phi^+(B_n)$ or $\Phi^+(C_n)$, we define
$$
L^-:=L \cap \{e_i-e_j:1 \leq i<j \leq n\},
$$
$$
L^+:=L \cap \{e_i+e_j:1 \leq i <j \leq n\},
$$
and
$$
L^0:=\left\{ \begin{array}{ll} L \cap \{e_i:1 \leq i \leq n\} & L \subseteq \Phi^+(B_n) \\ L \cap \{2e_i:1 \leq i \leq n\} & L \subseteq \Phi^+(C_n). \end{array} \right.
$$
We have that $L = L^-\sqcup L^+ \sqcup L^0$ is a partition of $L$.
By definition, the graph $\Gamma_0(L)$ is obtained by $\Gamma(L)$ by deleting edges corresponding to the subset~$L^0$ in $L$.

Now we fix an ideal $I \subseteq \Phi^+(B_n)$.  It follows directly from the definitions that
\begin{itemize}
\item[(a)] $\pi(I)^-=I^-$.
\end{itemize}
We claim that
\begin{itemize}
\item[(b)] $I^+ \subseteq \pi(I)^+$, and
\item[(c)] $I^+=\pi(I)^+$ if and only if $R(I)=R(\pi(I))$.
\end{itemize}
Together, (a),(b), and (c) imply the lemma.  So, it remains to prove (b) and (c).

We begin with (b). 
If $e_i+e_j\in I$, then $\pi(e_i+e_j)=e_i+e_{j-1}\in\pi(I)$. Since $\pi(I)$ is a lower order ideal of $\Phi^+(C_n)$ and
\[
e_i+e_{j} = (e_i+e_{j-1}) - \gamma_{j-1}  \text{ for }\; \gamma_{j-1}\in \Delta
\]
we get $e_i+e_{j-1}>e_i+e_j$ in $\Phi^+(C_n)$, and thus $e_i+e_j\in\pi(I)$ as well.

Now suppose $R(I)=R(\pi(I))$. 
If $\alpha= e_i+e_j\in \pi(I)^+$ then $s_\alpha\in R(\pi(I))$ and thus $s_\alpha\in R(I)$ and $\alpha = e_i+e_j \in I^+$. Thus, it is straightforward to see that $I^+=\pi(I)^+$ if $R(I)=R(\pi(I))$.  Statement (c) now follows once we show that $R(I)=R(\pi(I))$ whenever $I^+=\pi(I)^+$.  We observe that if $I^+=\pi(I)^+$ then $R(I)=R(\pi(I))$ if and only if $\pi(I)^0=\{2e_i: e_i \in I^0\}$.  We also know that $|\pi(I)|=|I|$ and $|\pi(I)^-|=|I^-|$ by (a). Our assumption that $I^+=\pi(I)^+$ further implies $|I^0|=|\pi(I)^0|$. Thus, the sets $\pi(I)^0$ and $\{2e_i: e_i \in I^0\}$ have the same cardinality. The fact that they are equal follows immediately from the facts that 
\begin{itemize}
    \item both $I$ and $\pi(I)$ are ideals,
    \item $e_i>e_{i+1}$ in $\Phi^+(B_n)$ for all $i \in [n-1]$, and
    \item $2e_i>2e_{i+1}$ in $\Phi^+(C_n)$ for all $i \in [n-1]$.
\end{itemize}
The last two claims are straightforward to verify. This completes the proof of (c).
\end{proof}

\begin{lemma} \label{lem.triv}
Let $I$ be an ideal in $\Phi^+(A_{n-1}) = \{e_i-e_j \mid 1\leq i<j\leq n\}$, $\Gamma(I)$ be the corresponding naturally labeled unit interval graph, and for $i \in [n-1]$ set 
$$
a_i(I):=|\{j>i:e_i-e_j \in I\}| = |\{j>i : ij\in E(I)\}|.
$$
Then the multiplicity of trivial representations in the (ungraded) dot action representation corresponding to $I$ is given by
$$
\langle \omega X_{\Gamma(I)}(\vx;1),s_n \rangle=\prod_{j=1}^{n-1}(1+a_j(I)).
$$
\end{lemma}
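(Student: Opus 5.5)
The plan is to compute the coefficient $\langle \omega X_{\Gamma(I)}(\vx;1), s_n\rangle$ directly from the monomial expansion. Since $\omega s_n = s_{1^n}=e_n$ and $\omega$ is an isometry, this equals $\langle X_{\Gamma(I)}(\vx;1), e_n\rangle$. By the Brosnan--Chow Theorem~\eqref{brosnanchow}, the ungraded dot action has Frobenius characteristic $\omega X_{\Gamma(I)}(\vx;1)$, so this coefficient is the multiplicity of the trivial representation. Writing $X_{\Gamma(I)}(\vx;1)=\sum_\lambda c_\lambda m_\lambda$ is awkward for extracting $e_n$. Instead I would use geometry, which the paper has already set up: by Proposition~\ref{prop:FixPoints} with $K=\Delta$, the dimension of the $S_n$-invariants of $H^*(\Hess(s,H(I)))$ equals $\dim H^*(\Hess(x_\Delta,H(I)))$, where $x_\Delta$ is regular nilpotent. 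By Lemma~\ref{lem.Betti}, this dimension is the number of $w\in S_n$ with $w^{-1}(\Delta)\subseteq \Phi_H=\Phi^+\cup(-I)$. So the lemma reduces to counting these permutations.

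The counting step runs as follows. The condition says that for each $i\in[n-1]$, the root $w^{-1}(e_i-e_{i+1})=e_{w^{-1}(i)}-e_{w^{-1}(i+1)}$ is either positive, or is the negative of a root in $I$. Set $u=w^{-1}$. The condition becomes: for each $i$, either $u(i)<u(i+1)$, or $u(i)>u(i+1)$ with $u(i+1)\,u(i)\in E(I)$. I would build $u$ by inserting values one at a time, proving the product formula by induction on $n$ and peeling off the vertex $1$ (or equivalently $n$). The alternative is a direct bijection in which position choices contribute factors $1+a_j$.

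Concretely, I would read $u$ as a word $u(1)u(2)\cdots u(n)$ in which every descent $u(i)>u(i+1)$ is an edge of $\Gamma(I)$. I would then insert the letters $n, n-1,\ldots,1$ in decreasing order. Inserting $j$ into the word formed from $\{j+1,\ldots,n\}$ is valid in the leftmost position; the new adjacency there is an ascent. It is also valid immediately to the right of any letter $k>j$ with $jk\in E(I)$, since that creates the descent $k\,j$, which is allowed.

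The main obstacle is checking that this insertion does not break the descent previously occupying that slot. Inserting $j$ between $k$ and $k'$ replaces the pair $k\,k'$ by $k\,j\,k'$, and $j\,k'$ is an ascent because $k'>j$. So the old descent disappears harmlessly. Conversely, deleting the minimal letter $j$ from a valid word yields a valid word only if any new adjacency $k\,k'$ with $k>k'$ is an edge. This follows from the unit interval property: $jk\in E(I)$ and $j<k'<k$ imply $k'k\in E(I)$. That property is exactly the order-ideal condition stated for type $A$. Given this, the insertion map is a bijection. Letter $j$ has $1+a_j(I)$ choices, and letter $n$ has one choice, which yields $\prod_{j=1}^{n-1}(1+a_j(I))$.
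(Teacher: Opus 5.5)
Your argument is correct, but it takes a different route from the paper. The paper simply sets $t=1$ in \cite[Theorem 6.9]{Shareshian-Wachs2016}, a purely combinatorial symmetric-function result giving a $t$-analogue of this product formula for the relevant Schur coefficient of $\omega X_{\Gamma(I)}(\vx;t)$. Brosnan--Chow is used only to read that coefficient as a multiplicity.

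You instead compute the multiplicity geometrically:
\begin{itemize}
\item Proposition~\ref{prop:FixPoints} with $K=\Delta$ turns it into the total Betti number of the regular nilpotent Hessenberg variety.
\item Lemma~\ref{lem.Betti} turns that into a count of words $u$ all of whose descents are edges of $\Gamma(I)$.
\item Your insertion of $n,n-1,\dots,1$ gives the product.
\end{itemize}
In the insertion step, say explicitly that a slot directly after a larger non-neighbor of $j$ creates a forbidden descent. Then the $1+a_j(I)$ slots you list are the only admissible ones, and your deletion step via the unit-interval property makes the insertion a bijection.

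Your route leans on heavier inputs: the Abe--Horiguchi--Masuda--Murai--Sato/B\u{a}libanu--Crooks invariants theorem and Precup's paving. But these are exactly the tools of Section~\ref{mainproof}, so your proof makes the lemma self-contained within the paper. The citation, by contrast, keeps the combinatorial identity free of geometry.

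The two combinatorial cores actually coincide. Your words are precisely the $\cp(I)$-tableaux of shape $1^n$ (read down the column) in the Shareshian--Wachs Schur expansion. Moreover, the admissible slots for $j$ create $0,1,\dots,a_j(I)$ new edge inversions, so tracking $\ell_H$ through your bijection also recovers the graded formula $\prod_{j=1}^{n-1}(1+t+\cdots+t^{a_j(I)})$.

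Finally, your opening remark about $e_n$ is never used and can be dropped.
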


\begin{proof}
The lemma follows immediately upon setting $t=1$ in~\cite[Theorem 6.9]{Shareshian-Wachs2016}.     
\end{proof}

It is now straightforward to prove Proposition~\ref{prop.nate}.
\begin{proof}[Proof of Proposition~\ref{prop.nate}] Lemmas~\ref{lem.subgraph} and~\ref{lem.triv} together imply that 
$$
\langle \omega X_{\Gamma(I)_u},s_n \rangle \leq \langle \omega X_{\Gamma(\pi(I))_u},s_n \rangle \; \text{ for all } \;  u\in {^J}W. 
$$
If $R(I) \neq R(\pi(I))$ then there is some edge $ij$ in $\Gamma_0(\pi(I))$ but not in $\Gamma_0(I)$, by Lemma \ref{lem.subgraph}.  Such an edge appears in $\Gamma(\pi(I))_v$ but not in $\Gamma(I)_v$ for some $v \in {^J}W$.  Applying Lemmas \ref{lem.subgraph} and \ref{lem.triv}, we see that for such $v$ we have
$$
\langle \omega X_{\Gamma(I)_v},s_n \rangle < \langle \omega X_{\Gamma(\pi(I))_v},s_n \rangle.
$$
Proposition~\ref{prop.nate} now follows directly from Theorem \ref{thm.graph} (with $t=1$).
\end{proof}

\begin{example}\label{ex.B3C3.compare}
Let $I\subseteq\Phi^+(B_3)$ be the order ideal generated by $e_1-e_3$ and $e_2$. 
Then $\pi(I)\subseteq\Phi^+(C_3)$ is the ideal generated by $e_1-e_3$ and $e_2+e_3$. The relevant ideals are highlighted in the Hasse diagrams below, with $\Phi^+(B_3)$ pictured on the left and $\Phi^+(C_3)$ on the right. 
\begin{center}
    \vspace*{0.5em}
    % First Diagram
    \begin{minipage}{0.45\textwidth}
        \centering
        \begin{tikzpicture}[vertex/.style={font=\scriptsize}]
            \def\df{9}
            \def\w{1 cm}

            \coordinate (100) at (0,0);
            \coordinate (010) at (2,0);
            \coordinate (001) at (4,0);
            \coordinate (110) at (1,1);
            \coordinate (011) at (3,1);
            \coordinate (111) at (2,2);
            \coordinate (021) at (4,2);
            \coordinate (121) at (3,3);
            \coordinate (221) at (4,4);
            
            \path[fill=blue!20, draw=blue!20, line width=\w,line cap=round, line join=round] 
            (100) to (110) to (010) to (011) to (001);

            \node[vertex] (p100) at (100) {$e_1-e_2$};
            \node[vertex] (p010) at (010) {$e_2-e_3$};
            \node[vertex] (p001) at (001) {$e_3$};
            \node[vertex] (p110) at (110) {$e_1-e_3$};
            \node[vertex] (p011) at (011) {$e_2$};
            \node[vertex] (p111) at (111) {$e_1$};
            \node[vertex] (p121) at (121) {$e_1+e_3$};
            \node[vertex] (p021) at (021) {$e_2+e_3$};
            \node[vertex] (p221) at (221) {$e_1+e_2$};
        
            \draw[very thick] (p100)--(p110)--(p111)--(p121)--(p221);
            \draw[very thick] (p110)--(p010)--(p011)--(p021)--(p121);
            \draw[very thick] (p001)--(p011)--(p111);
        \end{tikzpicture}
    \end{minipage}%
    \hfill
    % Second Diagram
    \begin{minipage}{0.45\textwidth}
        \centering
        \begin{tikzpicture}[vertex/.style={font=\scriptsize}]
            \def\df{9}
            \def\w{1 cm}

            \coordinate (100) at (0,0);
            \coordinate (010) at (2,0);
            \coordinate (001) at (4,0);
            \coordinate (110) at (1,1);
            \coordinate (011) at (3,1);
            \coordinate (111) at (2,2);
            \coordinate (021) at (4,2);
            \coordinate (121) at (3,3);
            \coordinate (221) at (4,4);
            
            \path[fill=blue!20, draw=blue!20, line width=\w,line cap=round, line join=round] 
            (100) to (110) to (010) to (011) to (001);

            \node[vertex] (p100) at (100) {$e_1-e_2$};
            \node[vertex] (p010) at (010) {$e_2-e_3$};
            \node[vertex] (p001) at (001) {$2e_3$};
            \node[vertex] (p110) at (110) {$e_1-e_3$};
            \node[vertex] (p011) at (011) {$e_2+e_3$};
            \node[vertex] (p111) at (111) {$e_1+e_3$};
            \node[vertex] (p121) at (121) {$e_1+e_2$};
            \node[vertex] (p021) at (021) {$2e_2$};
            \node[vertex] (p221) at (221) {$2e_1$};
        
            \draw[very thick] (p100)--(p110)--(p111)--(p121)--(p221);
            \draw[very thick] (p110)--(p010)--(p011)--(p021)--(p121);
            \draw[very thick] (p001)--(p011)--(p111);
        \end{tikzpicture}
    \end{minipage}
    \vspace*{0.5em}
\end{center}
We see easily that $R(I)\neq R(\pi(I))$ since $e_2\in I$ and $2e_2\notin \pi(I)$, but these roots determine the same reflection. The graphs $\Gamma_0(I)$ and $\Gamma_0(\pi(I))$ are pictured below; we have $\Gamma_0(I)\subsetneq \Gamma_0(\pi(I))$ in this case.
\begin{center}
    \vspace*{0.5em}
    % First Graph
    \begin{minipage}{0.45\textwidth}
        \centering
        \begin{tikzpicture}[
                scale=0.6,
                vertex/.style={circle, draw, minimum size=0.75 cm},
                every edge/.style={draw, thick}
            ]
            %Vertices
            \node[vertex, label=below:{}] (v1) at (0,0) {\tiny $1$};
            \node[vertex, label=below:{}] (v2) at (2,0) {\tiny$2$};
            \node[vertex, label=below:{}] (v3) at (4,0) {\tiny$3$};
            \node[vertex, label=below:{}] (v4) at (6,0) {\tiny$\bar 3$};
            \node[vertex, label=below:{}] (v5) at (8,0) {\tiny$\bar 2$};
            \node[vertex, label=below:{}] (v6) at (10,0) {\tiny$\bar1$};
            
            \draw (v1)--(v2)--(v3); 
            \draw (v4)--(v5)--(v6);
            \draw (v1) to [out=45, in=135] (v3) (v4) to [out=45, in=135] (v6);
        \end{tikzpicture} 
    \end{minipage}%
    \hfill
    % Second Graph
    \begin{minipage}{0.45\textwidth}
        \centering
        \begin{tikzpicture}[
                scale=0.6,
                vertex/.style={circle, draw, minimum size=0.75 cm},
                every edge/.style={draw, thick}
            ]
            %Vertices
            \node[vertex, label=below:{}] (v1) at (0,0) {\tiny $1$};
            \node[vertex, label=below:{}] (v2) at (2,0) {\tiny$2$};
            \node[vertex, label=below:{}] (v3) at (4,0) {\tiny$3$};
            \node[vertex, label=below:{}] (v4) at (6,0) {\tiny$\bar 3$};
            \node[vertex, label=below:{}] (v5) at (8,0) {\tiny$\bar 2$};
            \node[vertex, label=below:{}] (v6) at (10,0) {\tiny$\bar 1$};
            
            \draw (v1)--(v2)--(v3);
            \draw (v4)--(v5)--(v6);
            \draw (v1) to [out=45, in=135] (v3) (v2) to [out=45, in=135] (v4);
            \draw (v3) to [out=45, in=135] (v5) (v4) to [out=45, in=135] (v6);
        \end{tikzpicture}
    \end{minipage}
    \vspace*{0.5em}
\end{center}
For any $v\in{}^JW$, we have $\langle \omega X_{\Gamma(I)_v},s_3 \rangle \le \langle \omega X_{\Gamma(\pi(I))_v},s_3 \rangle$, and there exists some $v\in{}^JW$ that makes the inequality strict.

For example, when $w=e=1~2~3~\bar{3}~\bar{2}~\bar{1}\in{}^JW$, compare the two graphs Figure~\ref{fig:B=C}. 
In this case, the subgraphs $\Gamma(I)_{e}$ and $\Gamma(\pi(I))_{e}$ are equal and both are the naturally labeled unit interval graph corresponding to the order ideal $L=\Phi_J^+=\Phi^+(A_2)$.  We observe that $a_1(L)=2$, $a_2(L)=1$, and $a_3(L)=0$, so by Lemma~\ref{lem.triv} we have
\[
    \langle \omega X_{\Gamma(I)_{123}},s_3 \rangle=\langle \omega X_{\Gamma(\pi(I))_{123}},s_3 \rangle=(2+1)(1+1)(0+1)=6.
\]
\begin{figure}[ht]
    \centering
    \begin{subfigure}{0.4\textwidth}
    \begin{tikzpicture}[
        scale=0.55,
        vertex/.style={circle, draw, minimum size=0.45 cm},
        every edge/.style={draw, thick}
        ]
        \coordinate[circle, minimum size=0.45 cm] (p1) at (0,0);
        \coordinate[circle, minimum size=0.45 cm] (p2) at (2,0);
        \coordinate[circle, minimum size=0.45 cm] (p3) at (4,0);
        \coordinate[circle, minimum size=0.45 cm] (p0) at (6,0);
        \coordinate[circle, minimum size=0.45 cm] (p4) at (8,0);
        \coordinate[circle, minimum size=0.45 cm] (p5) at (10,0);
        \coordinate[circle, minimum size=0.45 cm] (p6) at (12,0);

        \path[draw=blue!20, line width=10,line cap=round, line join=round] 
        (p1) to [out=45, in=135] (p3) (p1) to (p2) to (p3);
            
        %Vertices
        \node[vertex, fill=blue!20, label=below:{}] (v1) at (0,0) {\tiny$1$};
        \node[vertex, fill=blue!20, label=below:{}] (v2) at (2,0) {\tiny$2$};
        \node[vertex, fill=blue!20, label=below:{}] (v3) at (4,0) {\tiny$3$};
        \node[vertex, label=below:{}] (v0) at (6,0) {\tiny$0$};
        \node[vertex, label=below:{}] (v4) at (8,0) {\tiny $\bar{3}$};
        \node[vertex, label=below:{}] (v5) at (10,0) {\tiny $\bar{2}$};
        \node[vertex, label=below:{}] (v6) at (12,0) {\tiny$\bar{1}$};

        %\node (w) at (-1.3,-1.3) {$v=$};
        \node[circle, fill=red!20, minimum size=0.25 cm] (w1) at (0,-1.3) {\tiny$1$};
        \node[circle, fill=red!20 , minimum size=0.25 cm] (w2) at (2,-1.3) {\tiny$2$};
        \node[circle, fill=red!20 , minimum size=0.25 cm] (w3) at (4,-1.3) {\tiny$3$};
        \node (w4) at (8,-1.3) {\tiny$\bar{3}$};
        \node (w5) at (10,-1.3) {\tiny$\bar{2}$};
        \node (w6) at (12,-1.3) {\tiny$\bar{1}$};

        \draw (v1)--(v2) (v3)--(v0) (v0)--(v4) (v5)--(v6);
        \draw (v2)--(v3) (v4)--(v5);
        \draw (v1) to [out=45, in=135] (v3) (v2) to [out=45, in=135] (v0);
        \draw (v0) to [out=45, in=135] (v5) (v4) to [out=45, in=135] (v6);

\end{tikzpicture}\noindent
    \caption{$\Gamma(I)_{e}\subset \Gamma(I)$}  
\end{subfigure}
    \hfill
\begin{subfigure}{0.4 \textwidth}
    \begin{tikzpicture}[
        scale=0.55,
        vertex/.style={circle, draw, minimum size=0.45 cm},
        every edge/.style={draw, thick}
        ]
        \coordinate[circle, minimum size=0.45 cm] (p1) at (0,0);
        \coordinate[circle, minimum size=0.45 cm] (p2) at (2,0);
        \coordinate[circle, minimum size=0.45 cm] (p3) at (4,0);
        \coordinate[circle, minimum size=0.45 cm] (p4) at (6,0);
        \coordinate[circle, minimum size=0.45 cm] (p5) at (8,0);
        \coordinate[circle, minimum size=0.45 cm] (p6) at (10,0);

        \path[draw=blue!20, line width=10,line cap=round, line join=round] 
        (p1) to [out=45, in=135] (p3) (p1) to (p2) to (p3);
            
        %Vertices
        \node[vertex, fill=blue!20, label=below:{}] (v1) at (0,0) {\tiny$1$};
        \node[vertex, fill=blue!20, label=below:{}] (v2) at (2,0) {\tiny$2$};
        \node[vertex, fill=blue!20, label=below:{}] (v3) at (4,0) {\tiny$3$};
        \node[vertex, label=below:{}] (v4) at (6,0) {\tiny $\bar{3}$};
        \node[vertex, label=below:{}] (v5) at (8,0) {\tiny $\bar{2}$};
        \node[vertex, label=below:{}] (v6) at (10,0) {\tiny$\bar{1}$};

        %\node (w) at (-1.3,-1.3) {$v=$};
        \node[circle, fill=red!20, minimum size=0.25 cm] (w1) at (0,-1.3) {\tiny$1$};
        \node[circle, fill=red!20 , minimum size=0.25 cm] (w2) at (2,-1.3) {\tiny$2$};
        \node[circle, fill=red!20 , minimum size=0.25 cm] (w3) at (4,-1.3) {\tiny$3$};
        \node (w4) at (6,-1.3) {\tiny$\bar{3}$};
        \node (w5) at (8,-1.3) {\tiny$\bar{2}$};
        \node (w6) at (10,-1.3) {\tiny$\bar{1}$};

        \draw (v1)--(v2) (v3)--(v4) (v5)--(v6);
        \draw (v2)--(v3) (v4)--(v5);
        \draw (v1) to [out=45, in=135] (v3) (v2) to [out=45, in=135] (v4);
        \draw (v3) to [out=45, in=135] (v5) (v4) to [out=45, in=135] (v6);

\end{tikzpicture}\noindent
    \caption{$\Gamma(\pi(I))_{e}\subset \Gamma(\pi(I))$}
\end{subfigure}\\
    \caption{}
    \label{fig:B=C}
\end{figure}

Next, consider $v=1~2~\bar{3}$ or $v=1~\bar{3}~2$ in ${}^JW$. Figure~\ref{fig:B!=C} displays the graphs $\Gamma(I)_v$ and $\Gamma(\pi(I))_v$. In either case, $\Gamma(I)_v$ and $\Gamma(\pi(I))_v$ are distinct, and, in fact, $\Gamma(I)_v$ can be viewed as a proper subgraph of $\Gamma(\pi(I))_v$, which by Lemma \ref{lem.triv}, implies
\[
    2=2\cdot 1\cdot 1=\langle \omega X_{\Gamma(I)_{12\bar{3}}},s_3 \rangle<\langle \omega X_{\Gamma(\pi(I))_{12\bar{3}}},s_3 \rangle=2\cdot 2\cdot 1=4
\]
and
\[
    2=2\cdot 1\cdot 1=\langle \omega X_{\Gamma(I)_{1\bar{3}2}},s_3 \rangle<\langle \omega X_{\Gamma(\pi(I))_{1\bar{3}2}},s_3 \rangle=2\cdot 2\cdot 1=4.
\]
This confirms the conclusion of Theorem~\ref{thm.nate} that the dot action  representations $H^*(\Hess(s, H(I)))$ and $H^*(s', H(\pi(I)))$ are not isomorphic in this case. 
\begin{figure}[ht]
    \centering
    \begin{subfigure}{0.4\textwidth}
    \begin{tikzpicture}[
        scale=0.55,
        vertex/.style={circle, draw, minimum size=0.45 cm},
        every edge/.style={draw, thick}
        ]
        \coordinate[circle, minimum size=0.45 cm] (p1) at (0,0);
        \coordinate[circle, minimum size=0.45 cm] (p2) at (2,0);
        \coordinate[circle, minimum size=0.45 cm] (p3) at (4,0);
        \coordinate[circle, minimum size=0.45 cm] (p0) at (6,0);
        \coordinate[circle, minimum size=0.45 cm] (p4) at (8,0);
        \coordinate[circle, minimum size=0.45 cm] (p5) at (10,0);
        \coordinate[circle, minimum size=0.45 cm] (p6) at (12,0);

        \path[draw=blue!20, line width=10,line cap=round, line join=round] 
        (p1) to (p2);
            
        %Vertices
        \node[vertex, fill=blue!20, label=below:{}] (v1) at (0,0) {\tiny$1$};
        \node[vertex, fill=blue!20, label=below:{}] (v2) at (2,0) {\tiny$2$};
        \node[vertex, label=below:{}] (v3) at (4,0) {\tiny$3$};
        \node[vertex, label=below:{}] (v0) at (6,0) {\tiny$0$};
        \node[vertex, fill=blue!20, label=below:{}] (v4) at (8,0) {\tiny $\bar{3}$};
        \node[vertex, label=below:{}] (v5) at (10,0) {\tiny $\bar{2}$};
        \node[vertex, label=below:{}] (v6) at (12,0) {\tiny$\bar{1}$};

        %\node (w) at (-1.3,-1.3) {$v=$};
        \node[circle, fill=red!20, minimum size=0.25 cm] (w1) at (0,-1.3) {\tiny$1$};
        \node[circle, fill=red!20 , minimum size=0.25 cm] (w2) at (2,-1.3) {\tiny$2$};
        \node (w3) at (4,-1.3) {\tiny$\bar{3}$};
        \node[circle, fill=red!20 , minimum size=0.25 cm] (w4) at (8,-1.3) {\tiny$3$};
        \node (w5) at (10,-1.3) {\tiny$\bar{2}$};
        \node (w6) at (12,-1.3) {\tiny$\bar{1}$};

        \draw (v1)--(v2) (v3)--(v0) (v0)--(v4) (v5)--(v6);
        \draw (v2)--(v3) (v4)--(v5);
        \draw (v1) to [out=45, in=135] (v3) (v2) to [out=45, in=135] (v0);
        \draw (v0) to [out=45, in=135] (v5) (v4) to [out=45, in=135] (v6);

\end{tikzpicture}\noindent
    \caption{$\Gamma(I)_{12\bar{3}}\subset \Gamma(I)$}  
\end{subfigure}
    \hfill
\begin{subfigure}{0.4 \textwidth}
    \begin{tikzpicture}[
        scale=0.55,
        vertex/.style={circle, draw, minimum size=0.45 cm},
        every edge/.style={draw, thick}
        ]
        \coordinate[circle, minimum size=0.45 cm] (p1) at (0,0);
        \coordinate[circle, minimum size=0.45 cm] (p2) at (2,0);
        \coordinate[circle, minimum size=0.45 cm] (p3) at (4,0);
        \coordinate[circle, minimum size=0.45 cm] (p4) at (6,0);
        \coordinate[circle, minimum size=0.45 cm] (p5) at (8,0);
        \coordinate[circle, minimum size=0.45 cm] (p6) at (10,0);

        \path[draw=blue!20, line width=10,line cap=round, line join=round] 
        (p1) to (p2) (p2) to [out=45, in=135] (p4) ;
            
        %Vertices
        \node[vertex, fill=blue!20, label=below:{}] (v1) at (0,0) {\tiny$1$};
        \node[vertex, fill=blue!20, label=below:{}] (v2) at (2,0) {\tiny$2$};
        \node[vertex, label=below:{}] (v3) at (4,0) {\tiny$3$};
        \node[vertex, fill=blue!20, label=below:{}] (v4) at (6,0) {\tiny $\bar{3}$};
        \node[vertex, label=below:{}] (v5) at (8,0) {\tiny $\bar{2}$};
        \node[vertex, label=below:{}] (v6) at (10,0) {\tiny$\bar{1}$};

        %\node (w) at (-1.3,-1.3) {$v=$};
        \node[circle, fill=red!20, minimum size=0.25 cm] (w1) at (0,-1.3) {\tiny$1$};
        \node[circle, fill=red!20 , minimum size=0.25 cm] (w2) at (2,-1.3) {\tiny$2$};
        \node (w3) at (4,-1.3) {\tiny$\bar{3}$};
        \node[circle, fill=red!20 , minimum size=0.25 cm] (w4) at (6,-1.3) {\tiny$3$};
        \node (w5) at (8,-1.3) {\tiny$\bar{2}$};
        \node (w6) at (10,-1.3) {\tiny$\bar{1}$};

        \draw (v1)--(v2) (v3)--(v4) (v5)--(v6);
        \draw (v2)--(v3) (v4)--(v5);
        \draw (v1) to [out=45, in=135] (v3) (v2) to [out=45, in=135] (v4);
        \draw (v3) to [out=45, in=135] (v5) (v4) to [out=45, in=135] (v6);

\end{tikzpicture}\noindent
    \caption{$\Gamma(\pi(I))_{12\bar{3}}\subset \Gamma(\pi(I))$}
\end{subfigure}\\
\begin{subfigure}{0.4\textwidth}
    \begin{tikzpicture}[
        scale=0.55,
        vertex/.style={circle, draw, minimum size=0.45 cm},
        every edge/.style={draw, thick}
        ]
        \coordinate[circle, minimum size=0.45 cm] (p1) at (0,0);
        \coordinate[circle, minimum size=0.45 cm] (p2) at (2,0);
        \coordinate[circle, minimum size=0.45 cm] (p3) at (4,0);
        \coordinate[circle, minimum size=0.45 cm] (p0) at (6,0);
        \coordinate[circle, minimum size=0.45 cm] (p4) at (8,0);
        \coordinate[circle, minimum size=0.45 cm] (p5) at (10,0);
        \coordinate[circle, minimum size=0.45 cm] (p6) at (12,0);

        \path[draw=blue!20, line width=10,line cap=round, line join=round] 
        (p1) to [out=45, in=135] (p3);
            
        %Vertices
        \node[vertex, fill=blue!20, label=below:{}] (v1) at (0,0) {\tiny$1$};
        \node[vertex, label=below:{}] (v2) at (2,0) {\tiny$2$};
        \node[vertex, fill=blue!20, label=below:{}] (v3) at (4,0) {\tiny$3$};
        \node[vertex, label=below:{}] (v0) at (6,0) {\tiny$0$};
        \node[vertex, label=below:{}] (v4) at (8,0) {\tiny $\bar{3}$};
        \node[vertex, fill=blue!20, label=below:{}] (v5) at (10,0) {\tiny $\bar{2}$};
        \node[vertex, label=below:{}] (v6) at (12,0) {\tiny$\bar{1}$};

        %\node (w) at (-1.3,-1.3) {$v=$};
        \node[circle, fill=red!20, minimum size=0.25 cm] (w1) at (0,-1.3) {\tiny$1$};
        \node (w2) at (2,-1.3) {\tiny$\bar{3}$};
        \node[circle, fill=red!20 , minimum size=0.25 cm] (w3) at (4,-1.3) {\tiny$2$};
        \node (w4) at (8,-1.3) {\tiny$\bar{2}$};
        \node[circle, fill=red!20 , minimum size=0.25 cm] (w5) at (10,-1.3) {\tiny$3$};
        \node (w6) at (12,-1.3) {\tiny$\bar{1}$};

        \draw (v1)--(v2) (v3)--(v0) (v0)--(v4) (v5)--(v6);
        \draw (v2)--(v3) (v4)--(v5);
        \draw (v1) to [out=45, in=135] (v3) (v2) to [out=45, in=135] (v0);
        \draw (v0) to [out=45, in=135] (v5) (v4) to [out=45, in=135] (v6);

\end{tikzpicture}\noindent
    \caption{$\Gamma(I)_{1\bar{3}2}\subset \Gamma(I)$}  
\end{subfigure}
    \hfill
\begin{subfigure}{0.4 \textwidth}
    \begin{tikzpicture}[
        scale=0.55,
        vertex/.style={circle, draw, minimum size=0.45 cm},
        every edge/.style={draw, thick}
        ]
        \coordinate[circle, minimum size=0.45 cm] (p1) at (0,0);
        \coordinate[circle, minimum size=0.45 cm] (p2) at (2,0);
        \coordinate[circle, minimum size=0.45 cm] (p3) at (4,0);
        \coordinate[circle, minimum size=0.45 cm] (p4) at (6,0);
        \coordinate[circle, minimum size=0.45 cm] (p5) at (8,0);
        \coordinate[circle, minimum size=0.45 cm] (p6) at (10,0);

        \path[draw=blue!20, line width=10,line cap=round, line join=round] 
        (p1) to [out=45, in=135] (p3) (p3) to [out=45, in=135] (p5) ;
            
        %Vertices
        \node[vertex, fill=blue!20, label=below:{}] (v1) at (0,0) {\tiny$1$};
        \node[vertex, label=below:{}] (v2) at (2,0) {\tiny$2$};
        \node[vertex, fill=blue!20, label=below:{}] (v3) at (4,0) {\tiny$3$};
        \node[vertex, label=below:{}] (v4) at (6,0) {\tiny $\bar{3}$};
        \node[vertex, fill=blue!20, label=below:{}] (v5) at (8,0) {\tiny $\bar{2}$};
        \node[vertex, label=below:{}] (v6) at (10,0) {\tiny$\bar{1}$};

        %\node (w) at (-1.3,-1.3) {$v=$};
        \node[circle, fill=red!20, minimum size=0.25 cm] (w1) at (0,-1.3) {\tiny$1$};
        \node (w2) at (2,-1.3) {\tiny$\bar{3}$};
        \node[circle, fill=red!20 , minimum size=0.25 cm] (w3) at (4,-1.3) {\tiny$2$};
        \node (w4) at (6,-1.3) {\tiny$\bar{2}$};
        \node[circle, fill=red!20 , minimum size=0.25 cm] (w5) at (8,-1.3) {\tiny$3$};
        \node (w6) at (10,-1.3) {\tiny$\bar{1}$};

        \draw (v1)--(v2) (v3)--(v4) (v5)--(v6);
        \draw (v2)--(v3) (v4)--(v5);
        \draw (v1) to [out=45, in=135] (v3) (v2) to [out=45, in=135] (v4);
        \draw (v3) to [out=45, in=135] (v5) (v4) to [out=45, in=135] (v6);

\end{tikzpicture}\noindent
    \caption{$\Gamma(\pi(I))_{1\bar{3}2}\subset \Gamma(\pi(I))$}
\end{subfigure}\\
    \caption{}
    \label{fig:B!=C}
\end{figure}

\end{example}

To conclude, we count the number of ideals with equal reflection sets, which is exactly the $(n+1)$-st Catalan number. As in the previous section, we let $J = \{e_1-e_2, e_2-e_3, \ldots, e_{n-1}-e_n\}$ and $\Phi_J^+ = \{e_i-e_j \mid 1\leq i <j \leq n\}$. 

\begin{lemma} \label{lem: Catalan}
The number of ideals $I\subseteq \Phi^+(B_n)$ such that $R(I) = R(\pi(I))$ is $\mathrm{Cat}_{n+1} := \frac{1}{n+1}{2n+2 \choose n+1}$.
\end{lemma}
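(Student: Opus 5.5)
The plan is to turn the condition $R(I)=R(\pi(I))$ into an explicit combinatorial description of $I$, and then match such ideals bijectively with order ideals of $\Phi^+(A_n)$. There are $\mathrm{Cat}_{n+1}$ of the latter, since this is the type $A_n$ $W$-Catalan number \cite{Cellini-Papi}.

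\emph{Step 1: reduce the condition.} By part (c) in the proof of Lemma~\ref{lem.subgraph}, $R(I)=R(\pi(I))$ holds if and only if $I^+=\pi(I)^+$. Since $I^+\subseteq \pi(I)^+$ always, I unwind when $e_i+e_j\in\pi(I)$. For $j<n$ this means $\pi^{-1}(e_i+e_j)=e_i+e_{j+1}\in I$. For $j=n$ it means $e_i\in I$. So equality says exactly two things for each $i<n$:
\begin{itemize}
\item if $e_i\in I$ then $e_i+e_n\in I$;
\item if $e_i+e_{j+1}\in I$ with $i<j<n$ then $e_i+e_j\in I$.
\end{itemize}
In $\Phi^+(B_n)$ the elements $e_i< e_i+e_n<e_i+e_{n-1}<\cdots<e_i+e_{i+1}$ form a chain. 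Because $I$ is a lower ideal, the two implications say that the ``row'' $\{e_i\}\cup\{e_i+e_j: j>i\}$ meets $I$ either trivially or entirely. Row $n$ is just $\{e_n\}$.

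\emph{Step 2: determine which unions of full rows can occur.} If row $i$ is full, then $e_i+e_{i+1}\in I$. This element lies above $e_{i+1}$, so $e_{i+1}\in I$, and by Step~1 row $i+1$ is full. Hence the full rows are exactly rows $k,k+1,\dots,n$ for some $k\in[n+1]$, where $k=n+1$ means no row is full. Also $e_k\in I$ forces $e_k-e_n\in I^-$ when $k<n$, because $e_k\ge e_k-e_n$. Conversely, I will check that the following set is a lower order ideal of $\Phi^+(B_n)$:
\begin{itemize}
\item an arbitrary type $A_{n-1}$ ideal $I^-\subseteq\Phi_J^+$,
\item together with the full rows $k,\dots,n$, subject to $e_k-e_n\in I^-$ when $k<n$.
\end{itemize}
It suffices to check the covers $e_i+e_j$ over $e_i+e_{j+1}$, $e_{i}$ over $e_{i+1}$, and $e_i$ over $e_i-e_{i+1}$ together with the consequent requirement $e_i-e_n\in I^-$. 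By Step~1 such an ideal automatically satisfies $R(I)=R(\pi(I))$.

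\emph{Step 3: the bijection.} An ideal of $\Phi^+(A_n)$ on $[n+1]$ is determined by two pieces of data. The first is its restriction $I^-$ to $\{e_i-e_j: i<j\le n\}$. The second is its intersection with $\{e_i-e_{n+1}\}$, which must have the form $\{e_i-e_{n+1}: i\ge k\}$ for some $k\in[n+1]$. The only constraint is that $e_k-e_{n+1}$ lies above $e_k-e_n$, so $e_k-e_n\in I^-$ when $k<n$. I send $e_i-e_{n+1}$ to the full row $i$. This matches the data of Step~2 exactly, which gives the count $\mathrm{Cat}_{n+1}$.

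As a sanity check, for $n=2$ the only ideal of $\Phi^+(B_2)$ that fails is $\{e_1-e_2,e_2,e_1\}$, leaving $5=\mathrm{Cat}_3$ ideals. The main obstacle is Step~2, namely verifying carefully that every such union is really a lower ideal, particularly near $k=n$ and the root $e_n$. That is a routine but fiddly check of cover relations.
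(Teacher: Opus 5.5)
Your proof is correct. The bijection you construct is the one the paper records right after its proof: pull back ideals of $\Phi^+(A_n)$ along the quotient map $p$, whose nontrivial fibres $[e_i,e_i+e_{i+1}]$ are exactly your rows. The written proof of the lemma is organized differently, though.

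In the paper, $I\cap(\Phi_J^+\sqcup\{e_1,\dots,e_n\})$ is encoded by the graph $\tilde{\Gamma}(I)$ on $[n]\cup\{0\}$, a naturally labeled unit interval graph on $n+1$ vertices. One then shows that an ideal with $R(I)=R(\pi(I))$ is determined by this graph, working directly with reflections: if $e_i+e_j\in I_1$ then $e_i\in I_1$, so $e_i\in I_2$, so $2e_i\in\pi(I_2)$, so $e_i+e_j\in\pi(I_2)$ because $e_i+e_j\le 2e_i$ in $\Phi^+(C_n)$, so $e_i+e_j\in I_2$.

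You instead use both directions of part (c) in the proof of Lemma~\ref{lem.subgraph}, including its cardinality argument comparing $I^0$ and $\pi(I)^0$. You unwind $I^+=\pi(I)^+$ into the cleaner statement that every row is empty or full. This gives an explicit description of the admissible ideals and, in particular, an actual proof of surjectivity. The paper's proof only asserts that every such graph is $\tilde{\Gamma}(I)$ for some ideal $I$, without noting that $I$ can be chosen with $R(I)=R(\pi(I))$ (one has to fill in the rows $i\ge k$). In exchange, the paper's route avoids part (c) and stays in the graph language used throughout.

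There are a few small slips in Step 2:
\begin{itemize}
\item The relevant cover below $e_i$ is $e_i-e_n$, not $e_i-e_{i+1}$; the difference $e_{i+1}$ is a simple root only when $i+1=n$. The requirement you then state, $e_i-e_n\in I^-$, is the correct one, and it follows from $e_k-e_n\in I^-$.
\item Your list of covers omits $e_i+e_j$ over $e_{i+1}+e_j$ (for $j>i+1$) and $e_i+e_n$ over $e_i$. Both are automatic because rows $k,\dots,n$ are all full.
\item The closed form in the statement should read $\frac{1}{n+2}\binom{2n+2}{n+1}$. As printed it is not even an integer for $n=2$, whereas your count, with the check $5=\mathrm{Cat}_3$, is the correct one.
\end{itemize}
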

\begin{proof} It is well-known that the number of naturally labeled unit interval graphs on $n$ vertices is $\mathrm{Cat}_n$. Given an ideal $I\subseteq \Phi^+(B_n)$, let $\tilde{\Gamma}(I)$ denote the induced subgraph of $\Gamma(I)$ on vertices $[n]\cup \{0\}$. For example, for the ideal $I$ and graph $\Gamma(I)$ as in Figure~\ref{fig:B3}, the graph $\tilde{\Gamma}(I)$ is pictured below. 
\begin{center}
        \begin{adjustbox}{max width=\textwidth}
            \begin{tikzpicture}[
                vertex/.style={circle, draw, minimum size=0.65 cm, inner sep=0pt},
                every edge/.style={draw, thick}
            ]
                %Vertices
                \node[vertex] (v1) at (0,0) {\tiny $1$};
                \node[vertex] (v2) at (1.2,0) {\tiny $2$};
                \node[vertex] (v3) at (2.4,0) {\tiny $3$};
                \node[vertex] (v7) at (3.6,0) {\tiny $0$};
                
                \draw (v1)--(v2)--(v3)--(v7);
                \draw (v1) to [out=45, in=135] (v3);
                \draw (v2) to [out=45, in=135] (v7);
            \end{tikzpicture}   
        \end{adjustbox}
\end{center}
It is straightforward to prove that $\tilde{\Gamma}(I)$ is a naturally labeled unit interval graph on $n+1$ vertices and that every such graph is of the form $\tilde{\Gamma}(I)$ for some ideal $I$ in $\Phi^+(B_n)$.

The edges of $\tilde\Gamma(I)$ are in bijection with the roots in the set $I\cap (\Phi_J^+ \sqcup \{e_1, \ldots, e_n\})$.

Let $I_1, I_2 \subseteq \Phi^+(B_n)$ be ideals such that $R(I_1) = R(\pi(I_1))$ and $R(I_2)=R(\pi(I_2))$. To complete the proof, it suffices to show that $I_1=I_2$ if and only if $\tilde{\Gamma}(I_1) = \tilde{\Gamma}(I_2)$. The forward direction is immediate, as $\tilde{\Gamma}(I_1) \neq \tilde{\Gamma}(I_2)$ implies $I_1\cap (\Phi_J^+ \sqcup\{e_1, \ldots, e_n\})\neq I_2\cap (\Phi_J^+ \sqcup\{e_1, \ldots, e_n\})$ and thus $I_1\neq I_2$. 

To prove the converse, suppose $I_1\neq I_2$. If $I_1\cap (\Phi_J^+ \sqcup\{e_1, \ldots, e_n\})\neq I_2\cap (\Phi_J^+ \sqcup\{e_1, \ldots, e_n\})$, then the graphs $\tilde\Gamma(I_1)$ and $\tilde{\Gamma}(I_2)$ have different edge sets so $\tilde{\Gamma}(I_1) \neq \tilde{\Gamma}(I_2)$. Thus we may assume that $I_1\cap (\Phi_J^+ \sqcup\{e_1, \ldots, e_n\})= I_2\cap (\Phi_J^+ \sqcup\{e_1, \ldots, e_n\})$. We show this assumption leads to a contradiction. 

Our assumption that $I_1$ and $I_2$ have the same intersection with $\Phi_J^+ \sqcup\{e_1, \ldots, e_n\}$ implies that if $\gamma\in I_1$ and $\gamma\notin I_2$ then $\gamma = e_i+e_j$ for some $1\leq i<j\leq n$. Since $e_i+e_j\geq e_i$ in the $B_n$ root system we have $e_i\in I_1$ as well. Since $I_1$ and $I_2$ have the same intersection with $\{e_1, \ldots, e_n\}$ we get $e_i\in I_2$ as well. Now $R(I_2) = R(\pi(I_2))$ implies $2e_i \in \pi(I_2)$. But then $e_i+e_j\in \pi(I_2)$ since $e_i+e_j\leq 2e_i$ in the type $C_n$ root system. Finally, we get $e_i+e_j\in I_2$, again because $R(I_2) = R(\pi(I_2))$, a contradiction. 
\end{proof}

The proof of Lemma \ref{lem: Catalan} also yields a natural bijection between the set of ideals $I\subseteq\Phi^+(B_n)$ such that $R(I)=R(\pi(I))$ and the set of ideals in $\Phi^+(A_n)$.
Consider the poset quotient map $p:\Phi^+(B_n)\rightarrow \Phi^+(A_n)$ defined by
\[
    p(\alpha)\coloneqq\begin{cases}
        e_i-e_j & \text{ if $\alpha=e_i-e_j$ for }1\le i<j\le n-1 \\
        e_i-e_{n+1} & \text{ if $\alpha\in [e_i,e_i+e_{i+1}]$ for }1\le i\le n-1 \\
        e_n-e_{n+1} & \text{ if $\alpha=e_n$}
    \end{cases}.
\]
See Figure \ref{fig: quotient} for the quotient map when $n=3$.
The bijection is given by pulling back any order ideal in $\Phi^+(A_n)$ through the projection $p$. 
Indeed, for any ideal $I\subseteq \Phi^+(A_n)$, its preimage $p^{-1}(I)$ is an order ideal in $\Phi^+(B_n)$ satisfying $R(p^{-1}(I))=R(\pi( p^{-1}(I)))$.
Moreover, any ideal in $\Phi^+(B_n)$ satisfying this condition arises uniquely as such a preimage.

\begin{figure}[htbp]
    \begin{adjustbox}{max width=\textwidth}
        \begin{tikzpicture}[
            vertex/.style={font=\scriptsize}
        ]
            \def\df{9}
            \def\w{1 cm}
    
            \coordinate (100) at (0,0);
            \coordinate (010) at (2,0);
            \coordinate (001) at (4,0);
            \coordinate (110) at (1,1);
            \coordinate (011) at (3,1);
            \coordinate (111) at (2,2);
            \coordinate (021) at (4,2);
            \coordinate (121) at (3,3);
            \coordinate (221) at (4,4);
            \coordinate (e34) at (4.9,0.2);
            \coordinate (e24) at (5,2.4);
            \coordinate (e14) at (5,4.4);
                    
            \path[fill=blue!20, draw=red!20, line width=\w,line cap=round, line join=round] 
            (111) to (121) to (221);
            \path[fill=blue!20, draw=red!20, line width=\w,line cap=round, line join=round] 
            (011) to (021);
    
            \node[vertex, rounded corners, fill=red!20] (p100) at (100) {$e_1-e_2$};
            \node[vertex, rounded corners, fill=red!20] (p010) at (010) {$e_2-e_3$};
            \node[vertex, rounded corners, fill=red!20] (p001) at (001) {$e_3$};
            \node[vertex, rounded corners, fill=red!20] (p110) at (110) {$e_1-e_3$};
            \node[vertex] (p011) at (011) {$e_2$};
            \node[vertex] (p111) at (111) {$e_1$};
            \node[vertex] (p121) at (121) {$e_1+e_3$};
            \node[vertex] (p021) at (021) {$e_2+e_3$};
            \node[vertex] (p221) at (221) {$e_1+e_2$};
            \node[vertex] at (e34) {\textcolor{blue}{$e_3-e_4$}};
            \node[vertex] at (e24) {\textcolor{blue}{$e_2-e_4$}};
            \node[vertex] at (e14) {\textcolor{blue}{$e_1-e_4$}};
                
            \draw[very thick] (p100)--(p110)--(p111)--(p121)--(p221);
            \draw[very thick] (p110)--(p010)--(p011)--(p021)--(p121);
            \draw[very thick] (p001)--(p011)--(p111);
        \end{tikzpicture}
    \end{adjustbox}
    \caption{The equivalence classes in $\Phi^+(B_3)$ induced from $p$ and their images}
    \label{fig: quotient}
\end{figure}

%%%%%%%%%%%%%%%%%%

\bibliographystyle{alpha}
\bibliography{ref.bib}

\end{document}